\documentclass[a4paper, 11pt, reqno]{amsart}

\usepackage{amsmath,amssymb,amsthm}
\usepackage{thmtools}
\usepackage{enumerate}
\usepackage{xcolor}
\usepackage[mathscr]{euscript}
\usepackage{url}
\usepackage[all]{xy}
\usepackage{booktabs}
\usepackage{graphicx}
\usepackage{epstopdf}
\usepackage{algorithmic}
\usepackage[caption=false]{subfig}
\usepackage{mathtools}
\usepackage{bigdelim}

\usepackage[center]{caption}
\usepackage{makecell}

\usepackage[misc]{ifsym}
\usepackage{multirow}
\usepackage{float}
\usepackage{placeins}

\usepackage[ruled,vlined]{algorithm2e}

\usepackage{appendix}

\usepackage{hyperref}

\definecolor{cobalt}{rgb}{0.0, 0.28, 0.67}
\definecolor{darkblue}{rgb}{0.0, 0.0, 0.55}

\hypersetup
{
bookmarksopen=true,
pdftitle={Globally convergent Block Coordinate Descent Algorithms for Semi-Nonnegative Independent Component Analysis},
pdfauthor={},
pdfsubject={Simplex-constrained optimization for semi-nonnegative independent component analysis},
pdfmenubar=true,
pdfhighlight=/O,
colorlinks=true,
pdfpagelayout=SinglePage,
pdffitwindow=true,
linkcolor=cobalt,
citecolor=cobalt,
urlcolor=cobalt
}

\usepackage[capitalize,nameinlink,noabbrev]{cleveref}

\crefname{appendix}{Appendix}{Appendices}
\Crefname{appendix}{Appendix}{Appendices}

\newenvironment{subalgorithm}[1][H]{
\begin{algorithm}[#1]
\SetAlgorithmName{Subalgorithm}{Subalgorithm}{List of Subalgorithms}
\crefalias{algocf}{subalgorithm}
}{
\end{algorithm}
}

\crefname{subalgorithm}{Subalgorithm}{Subalgorithms}
\Crefname{subalgorithm}{Subalgorithm}{Subalgorithms}

\usepackage[scale=0.75]{geometry}
\usepackage{tikz}
\usetikzlibrary{shapes}
\usetikzlibrary{arrows}
\usetikzlibrary{matrix}
\theoremstyle{plain}
\newtheorem{theorem}{Theorem}[section]
\newtheorem{corollary}[theorem]{Corollary}
\newtheorem{lemma}[theorem]{Lemma}
\newtheorem{proposition}[theorem]{Proposition}
\theoremstyle{definition}

\newtheorem{assumption}{Assumption}

\crefname{assumption}{Assumption}{Assumptions}
\Crefname{assumption}{Assumption}{Assumptions}

\theoremstyle{remark}
\newtheorem{remark}[theorem]{Remark}
\newcommand{\tens}[1]{\boldsymbol{\mathcal{#1}}}

\newcommand{\matr}[1]{\boldsymbol{#1}}

\newcommand{\vect}[1]{\boldsymbol{#1}}

\newcommand{\set}[1]{\mathcal{#1}}
\newcommand{\kr}{\odot}     % Khatri-Rao product
\newcommand{\hadam}{\circ}    % Hadamard product
\newcommand{\kron}{\otimes}   % Kronecker product
\newcommand{\T}{\mathsf{T}}        % transposition
\newcommand{\herm}{\mathsf{H}}     % Hermitian transposition
\newcommand{\vecl}{\operatorname{vec}}
\makeatletter
\newcommand{\rank}[1]{\mathop{\operator@font rank}\!\left\{#1\right\}}
\newcommand{\colrank}[1]{\mathop{\operator@font colrank}\!\left\{#1\right\}}
\newcommand{\krank}[1]{\mathop{\operator@font krank}\!\left\{#1\right\}}
\newcommand{\trace}[1]{\mathop{\operator@font tr}\!\left(#1\right)}
\newcommand{\Diag}[1]{\mathop{\operator@font Diag}\!\left\{#1\right\}}    % a diagonal matrix
\newcommand{\diag}[1]{\mathop{\operator@font \textbf{diag}}\!\left\{#1\right\}}  
\newcommand{\blockdiag}[1]{\mathop{\operator@font \textbf{blockdiag}}\!\left\{#1\right\}} % a vector
\newcommand{\Span}[1]{\mathop{\operator@font Span}\!\left\{#1\right\}}    % a space
\newcommand{\argmin}{\mathop{\operator@font argmin}}

\newcommand{\offdiag}[1]{\mathop{\operator@font offdiag}\!\left\{#1\right\}}    % a vector
\newcommand{\Proj}[1]{\operatorname{Proj}_{#1}}
\newcommand{\ProjGrad}[2]{\mathop{{\operator@font grad}}#1\!\left(#2\right)}
\newcommand{\symmm}[1]{\mathop{\operator@font sym}\!\left(#1\right)}
\newcommand{\skeww}[1]{\mathop{\operator@font skew}\!\left(#1\right)}
\makeatother

\newcommand{\eqdef}{\stackrel{\sf def}{=}}

\usepackage{tikz}
\usetikzlibrary{shapes.geometric, arrows}

\tikzstyle{startstop} = [rectangle, rounded corners, minimum width=1.8cm, minimum height=0.65cm,text centered, draw=black, fill=green!30]
\tikzstyle{startstop0} = [rectangle, rounded corners, minimum width=0.01cm, minimum height=0.65cm,text centered, draw=white, fill=white!30]
\tikzstyle{io} = [trapezium, trapezium left angle=70, trapezium right angle=110, minimum width=3cm, minimum height=1cm, text centered, draw=black, fill=blue!30]
\tikzstyle{process} = [rectangle, minimum width=3cm, minimum height=1cm, text centered, draw=black, fill=orange!30]
\tikzstyle{decision} = [diamond, minimum width=3cm, minimum height=1cm, text centered, draw=black, fill=green!30]
\tikzstyle{arrow} = [thick,->,>=stealth]

\makeatletter
\def\bbordermatrix#1{\begingroup \m@th
\@tempdima 4.75\p@
\setbox\z@\vbox{%
\def\cr{\crcr\noalign{\kern2\p@\global\let\cr\endline}}%
\ialign{$##$\hfil\kern2\p@\kern\@tempdima&\thinspace\hfil$##$\hfil
&&\quad\hfil$##$\hfil\crcr
\omit\strut\hfil\crcr\noalign{\kern-\baselineskip}%
#1\crcr\omit\strut\cr}}%
\setbox\tw@\vbox{\unvcopy\z@\global\setbox\@ne\lastbox}%
\setbox\tw@\hbox{\unhbox\@ne\unskip\global\setbox\@ne\lastbox}%
\setbox\tw@\hbox{$\kern\wd\@ne\kern-\@tempdima\left[\kern-\wd\@ne
\global\setbox\@ne\vbox{\box\@ne\kern2\p@}%
\vcenter{\kern-\ht\@ne\unvbox\z@\kern-\baselineskip}\,\right]$}%
\null\;\vbox{\kern\ht\@ne\box\tw@}\endgroup}
\makeatother

\makeatletter
\@ifundefined{subjclassname@2020}
{\@namedef{subjclassname@2020}{\textup{2020} Mathematics Subject Classification}}
{}
\makeatother

\begin{document}

\title[Globally convergent Block Coordinate Descent Algorithms]
{Globally convergent Block Coordinate Descent Algorithms for Semi-Nonnegative Independent Component Analysis}

\author{Zhenyu Cai}
\address{School of Science, Sun Yat-sen University,
Shenzhen, Guangdong, China}
\email{caizhy27@mail2.sysu.edu.cn}

\author{Wentao Ding}
\address{Department of Systems Engineering and Engineering Management,
The Chinese University of Hong Kong, Shatin, N.T., Hong Kong}
\email{wentaoding@cuhk.edu.hk}

\author{Jianze Li}
\address{School of Science, Sun Yat-sen University,
Shenzhen, Guangdong, China}
\email{lijianze@mail.sysu.edu.cn}

\author{Wen Huang}
\address{School of Mathematical Sciences, Xiamen University, Xiamen, China}
\email{wen.huang@xmu.edu.cn}

\keywords{semi-nonnegative independent component analysis,
probability simplex,
block coordinate descent,
elementary column transformation,
global convergence,
\L{}ojasiewicz inequality}

\subjclass[2020]{90C26, 65K05, 15A23, 94A12}

\begin{abstract}
We propose a subgradient-based block coordinate descent (BCD-SubG)
framework for nonconvex optimization over products of probability
simplices and a closed feasible set, motivated by semi-nonnegative
independent component analysis (Semi-NNICA).
Elementary column transformations are used to preserve simplex feasibility,
while first-order information guides block and pair selection.
We establish the global convergence of BCD-SubG using a
constrained \L{}ojasiewicz inequality; additional eventual-interiority conditions yield stationary limits and convergence
rates. 
We then specialize BCD-SubG to the direct-fit and indirect-fit
Semi-NNICA models and derive explicit updates that ensure global
convergence without further assumptions. For both models, eventual
interiority also yields stationary limits and convergence-rate estimates.
Experiments on synthetic mixtures and simulated magnetic resonance
spectroscopy data show generally lower recovery errors and shorter
wall-clock times than the tested Semi-NNICA baselines. Joint diagonalization
experiments further demonstrate lower final objective values.
\end{abstract}

\maketitle

\section{Introduction}

In this paper, we adopt the \emph{block coordinate descent} (BCD) framework to study the following optimization problem:
\begin{align}
\min \; f(\matr{U}, \matr{V}), 
\quad (\matr{U}, \matr{V}) \in \Delta_{n}^{m} \times \Omega,
\label{eq:cost_general}
\end{align}
where $m\geq1$, $n\geq2$, and $\Delta_{n}^{m}$ denotes the Cartesian product of $m$ \emph{probability
simplices} in $\mathbb{R}^n$ defined by
\[
\Delta_{n} \eqdef \left\{ \vect{u} \in \mathbb{R}^{n}, \; u_i \geq 0,\; \sum_{i=1}^{n} u_i = 1 \right\},
\]
and $\matr{U}=\left(\vect{u}^{(1)},\vect{u}^{(2)},\ldots,\vect{u}^{(m)}\right)\in\Delta_{n}^{m}$. 
Equivalently, $\Delta_n^m$ is the set of $n\times m$ \emph{column-stochastic matrices}, also called \emph{left-stochastic matrices}~\cite{hogben2006handbook}.
Here, $\Omega$ is a closed feasible set in a finite-dimensional Euclidean space, and the second block satisfies $\matr{V}\in\Omega$. The choice of $\Omega$ is left general, while the precise assumptions on its local analytic parametrization and the $\matr{V}$-block update are stated in \cref{thm:Schneider_frame_two}. 
In addition, the cost function $f$ is assumed to admit a
real-analytic extension to an open neighborhood of
$\Delta_n^m\times\Omega$.
We mainly focus on the iterative update of the matrix $\matr{U}$ over the set $\Delta_{n}^{m}$.

The primary motivation for studying problem \eqref{eq:cost_general}
comes from \emph{independent component analysis} (ICA), a standard
framework for \emph{blind source separation} (BSS)
\cite{comon1994independent,Como10:book}.
Nonnegativity may be imposed on the sources
\cite{plumbley2003algorithms} or, as in Semi-NNICA, on the mixing
matrix \cite{wang2014nonnegative}.
Classical ICA algorithms include cumulant-based joint diagonalization
and fixed-point methods \cite{cardoso1993blind,hyvarinen1999fast}.
The classical \emph{linear instantaneous mixture model} of ICA is formulated as:
\begin{align}\label{Ica_model}
\vect{x}(t) = \matr{A}\vect{s}(t)+\vect{\varepsilon}(t),\ \ t = 1,2,\ldots, T, 
\end{align}
where $\vect{x}(t) = [x_1(t), \ldots, x_n(t)]^\T \in \mathbb{R}^n$ denotes the \emph{observed signals}, 
$\vect{s}(t) = [s_1(t), \ldots, s_m(t)]^\T \in \mathbb{R}^m$ denotes the \emph{source signals}, 
$\matr{A} \in \mathbb{R}^{n \times m}$ is an unknown \emph{mixing matrix}, and $\vect{\varepsilon}(t)$ denotes the noise. 
Given the observed signals $\vect{x}(t)$, the goal of BSS or ICA is to recover the underlying source signals $\vect{s}(t)$ and the mixing matrix $\matr{A}$.
In this paper, we consider the overdetermined setting, where $n\geq m$; that is, the number of observed signals is greater than or equal to the number of source signals. 
% As commonly assumed for the identifiability of ICA, the mixing matrix
% $\matr{A}$ in the ICA model is assumed to have full column rank.
In many practical scenarios, additional structural constraints are required to better capture the underlying physical or statistical properties of the data. 
Among these, \emph{semi-nonnegative independent component analysis} (Semi-NNICA)
\cite{wang2014nonnegative} has received considerable attention due to
its practical relevance. In Semi-NNICA, the mixing matrix $\matr{A}$ is
constrained to be element-wise nonnegative, while the source signals are
not necessarily nonnegative. Let $\set{D}_m$ denote the space of real
$m\times m$ diagonal matrices and write
$\matr{D}=(\matr{D}^{(1)},\ldots,\matr{D}^{(L)})\in(\set{D}_m)^L$.
Two widely used Semi-NNICA criteria \cite{coloigner2012line,coloigner2014semi,coloigner2014line,wang2013nonnegative,wang2014canonical,wang2013new}, \cite[Eqs. (1.29), (2.46)]{wang2014nonnegative} are
\begin{align}
\tilde{f}_{\mathrm{dir}}(\matr{A},\matr{D})
&= \sum_{\ell=1}^{L} \left\|\, \matr{C}^{(\ell)} - \matr{A}\matr{D}^{(\ell)}\matr{A}^{\T} \,\right\|^{2}, 
\label{eq:cost_direct-old}\\
\tilde{f}_{\mathrm{indir}}(\matr{A})
&= \sum_{\ell=1}^{L} \left\| \operatorname{offdiag}\!\left(\matr{A}^{\T} \bigl(\matr{C}^{(\ell)}\bigr)^{-1} \matr{A}\right) \right\|^{2}, 
\label{eq:cost_indirect-old}
\end{align}
where $\matr{A}\geq\matr{0}$.
Here, $\matr{C}^{(\ell)}\in\mathbb{R}^{n\times n}$ denotes the $\ell$-th
real symmetric matrix slice of the cumulant tensor $\tens{C}$ associated with the
observed signals in the model \eqref{Ica_model}, and $L$ denotes the
total number of such slices. The direct-fit formulation \eqref{eq:cost_direct-old} is considered for the
general case $n\geq m$.
For the indirect-fit formulation \eqref{eq:cost_indirect-old}, throughout
this paper we restrict attention to the square case $n=m$ and assume
that each $\matr{C}^{(\ell)}$ is nonsingular.

In the direct-fit model \eqref{eq:cost_direct-old}, the nonnegativity
constraint can be imposed directly on the mixing matrix.
In contrast, the indirect-fit objective \eqref{eq:cost_indirect-old} is
homogeneous in $\matr{A}$ and therefore suffers from a scaling
degeneracy. An additional normalization is thus required to prevent the
objective from being reduced merely by scaling $\matr{A}$ toward zero.
%In this work, the column-wise probability-simple constraint is adopted to simultaneously enforce nonnegativity and fix the scale of each mixing vector.
More details about these two models can be found in \cite{wang2014nonnegative} and the references therein. 
To handle the nonnegativity constraints in the models \eqref{eq:cost_direct-old} and \eqref{eq:cost_indirect-old}, a common strategy in Semi-NNICA is to transform the constrained optimization problem into an unconstrained one via element-wise squaring \cite[Eq. (1.52)]{wang2014nonnegative} or exponential mappings \cite{coloigner2014semi}. 
Based on these reformulations, various algorithms, including Jacobi-type algorithms, \emph{line search} and \emph{trust region} strategies, have been systematically 
studied \cite{coloigner2012line,coloigner2014semi,coloigner2014line,wang2014nonnegative,wang2013nonnegative,wang2014canonical,wang2013new}. 
The square parametrization increases the polynomial degree
\cite[Eq.~(2.22)]{wang2014nonnegative}, while the exponential
parametrization introduces nonpolynomial nonlinearities.
In the experiments reported in
\cite[Tables~3.1 and~3.2]{wang2014nonnegative},
$\mathrm{JDC}^{+}_{\mathrm{ADMM-3}}$, based on the alternating
direction method of multipliers (ADMM), requires fewer floating-point
operations than $\mathrm{JDC}^{+}_{\mathrm{LU-3}}$.
These results motivate methods that handle nonnegativity directly,
but do not imply a general complexity ordering between the two approaches.

In this paper, we take a different approach by constraining the matrix $\matr{A}$ in the models \eqref{eq:cost_direct-old} and \eqref{eq:cost_indirect-old} 
to lie in a Cartesian product of probability simplices $\Delta_{n}^{m}$, as in \eqref{eq:cost_general}.
This constraint is natural, as it enforces both nonnegativity and normalization on the columns of $\matr{A}$, and leads to a more structured optimization problem. 
For the direct-fit criterion, we also introduce a quadratic penalty on the
diagonal factors with a fixed parameter $\lambda>0$. We thus consider
\begin{align}
\min_{\matr{A}\in\Delta_n^m,\,\matr{D}\in(\set{D}_m)^L}
\quad f_{\mathrm{dir}}(\matr{A},\matr{D})
&\eqdef \tilde{f}_{\mathrm{dir}}(\matr{A},\matr{D})
+\lambda\sum_{\ell=1}^{L}\|\matr{D}^{(\ell)}\|^2,
\label{eq:cost_direct}\\
\min_{\matr{A}\in\Delta_n^m}\quad f_{\mathrm{indir}}(\matr{A})
&= \sum_{\ell=1}^{L} \left\| \operatorname{offdiag}\!\left(\matr{A}^{\T} \bigl(\matr{C}^{(\ell)}\bigr)^{-1} \matr{A}\right) \right\|^{2}. 
\label{eq:cost_indirect}
\end{align}
With the column scales fixed by the simplex constraints, the penalty controls
the magnitude of the diagonal factors and discourages large, mutually
cancelling components when mixing columns are nearly dependent. 
Under this formulation, the direct-fit model \eqref{eq:cost_direct}
and, in the square case $n=m$, the indirect-fit model
\eqref{eq:cost_indirect} can be viewed as special cases of
\eqref{eq:cost_general}, corresponding to the choices
$\Omega=(\set{D}_m)^L$ and
$\Omega=\{\matr{0}_{1\times1}\}$, respectively.

To solve models~\eqref{eq:cost_direct} and~\eqref{eq:cost_indirect}, we adopt a
\emph{subgradient-based block coordinate descent} (BCD-SubG) framework, in
which each selected block is updated via an \emph{elementary column transformation} (ECT);
see \cref{sec:bcd-framework} for further details. 
An ECT transfers mass between two coordinates according to
$\vect{u}^{+}
=
\vect{u}
+
t\bigl(\vect{e}_i-\vect{e}_j\bigr)$ with 
$t\in[-u_i,u_j],$ 
and thus preserves simplex feasibility. 
Closely related mechanisms appear
in two-coordinate descent methods for linearly constrained
problems~\cite{beck2014twocoordinate,cristofari2019almost,konnov2019selective},
in sequential minimal optimization (SMO), where the dual equality constraint
is preserved during support vector machine (SVM) training~\cite{platt1998sequential,fan2005working},
and in pairwise variants of Frank--Wolfe (FW), where weight is transferred between two
vertices~\cite{lacostejulien2015global}.
Related gradient-based Jacobi-type methods for orthogonal and unitary
diagonalization use first-order information to select transformations
and establish convergence through \L{}ojasiewicz inequalities
\cite{li2018globally,ULC2019}.
Our contribution is to integrate this
mass-transfer mechanism into a BCD framework involving both simplex and
closed-set blocks, to use first-order gradient information to select the updated block and ordered coordinate pair, and to establish the \emph{global convergence}\footnote{In this paper, global convergence means that, from any initial point, the entire sequence of iterates converges to a single limit point.} of the
resulting algorithm.

A wide range of optimization algorithms over $\Delta_n^m$ have been developed in the literature and can be broadly classified into three main categories. 
First, methods in the original variables include columnwise projected
gradient (PG) with simplex projections \cite{condat2016fast} and
entropy-based mirror descent \cite{beck2003mirror}.
A particular multiplicative-weights variant gives second-order
stationarity guarantees for almost all initializations under the
stated assumptions and sufficiently small step sizes
\cite{panageas2019multiplicative}.
Second, block methods exploit the product structure. Block-coordinate
FW (Block-FW) updates one simplex factor through a vertex oracle;
its randomized variant achieves an expected $\mathcal{O}(1/k)$
objective-gap rate for smooth convex problems
\cite{lacostejulien2013block}.
Recent projection-free methods also cover nonconvex product-domain
problems, with local active-set identification for suitable away-step
variants under strict complementarity \cite{bomze2025projection}.
Proximal alternating linearized minimization (PALM) handles mixed
domains $\Delta_n^m\times\set{C}$ through block-separable indicators;
under its standard blockwise Lipschitz and Kurdyka--\L{}ojasiewicz
assumptions, bounded iterates converge to a critical point
\cite{bolte2014proximal}.
Third, smooth reformulations use the Hadamard lift
$\matr{X}=\matr{Z}\hadam\matr{Z}$ from
$(\mathbb{S}^{n-1})^m$ onto $\Delta_n^m$
\cite{li2023simplex,levin2025effect}. Although lifted first-order points
may be spurious at zero coordinates, second-order points map to
stationary points of the simplex-constrained problem.
Riemannian proximal-gradient methods also address nonsmooth composites
when the associated proximal subproblems are tractable
\cite{huang2022riemannian}.

We now use a simple example to illustrate why ECT is adopted in this paper to solve the Semi-NNICA models~\eqref{eq:cost_direct} and~\eqref{eq:cost_indirect}.
Consider the following nonconvex simplex-constrained problem:
\begin{equation}
\min_{\vect{u}\in\Delta_3}
\quad
f(\vect{u})
=
-2u_1^3+u_1^2-u_1-u_2 .
\label{eq:simplex-motivation}
\end{equation}
Starting from $\vect{u}^{(0)}=(0.15,0.82,0.03)^\T$, the classical PG trajectory shown in \cref{fig:simplex-motivation} converges to the suboptimal local minimizer $\vect{e}_2$, where $f(\vect{e}_2)=-1$. By contrast, the ECT strategy employed in the proposed BCD-SubG method reaches the global minimizer $\vect{e}_1$ after only two ECT updates, attaining $f(\vect{e}_1)=-2$. Thus, even on a three-dimensional simplex, ECT can more readily move the iterates out of an unfavorable local region and thus converge to a better solution.
%The plotted paths are qualitative: in particular, a PG trajectory depends on the chosen step rule.

\begin{figure}[t]
\centering
\includegraphics[width=0.4\textwidth]
{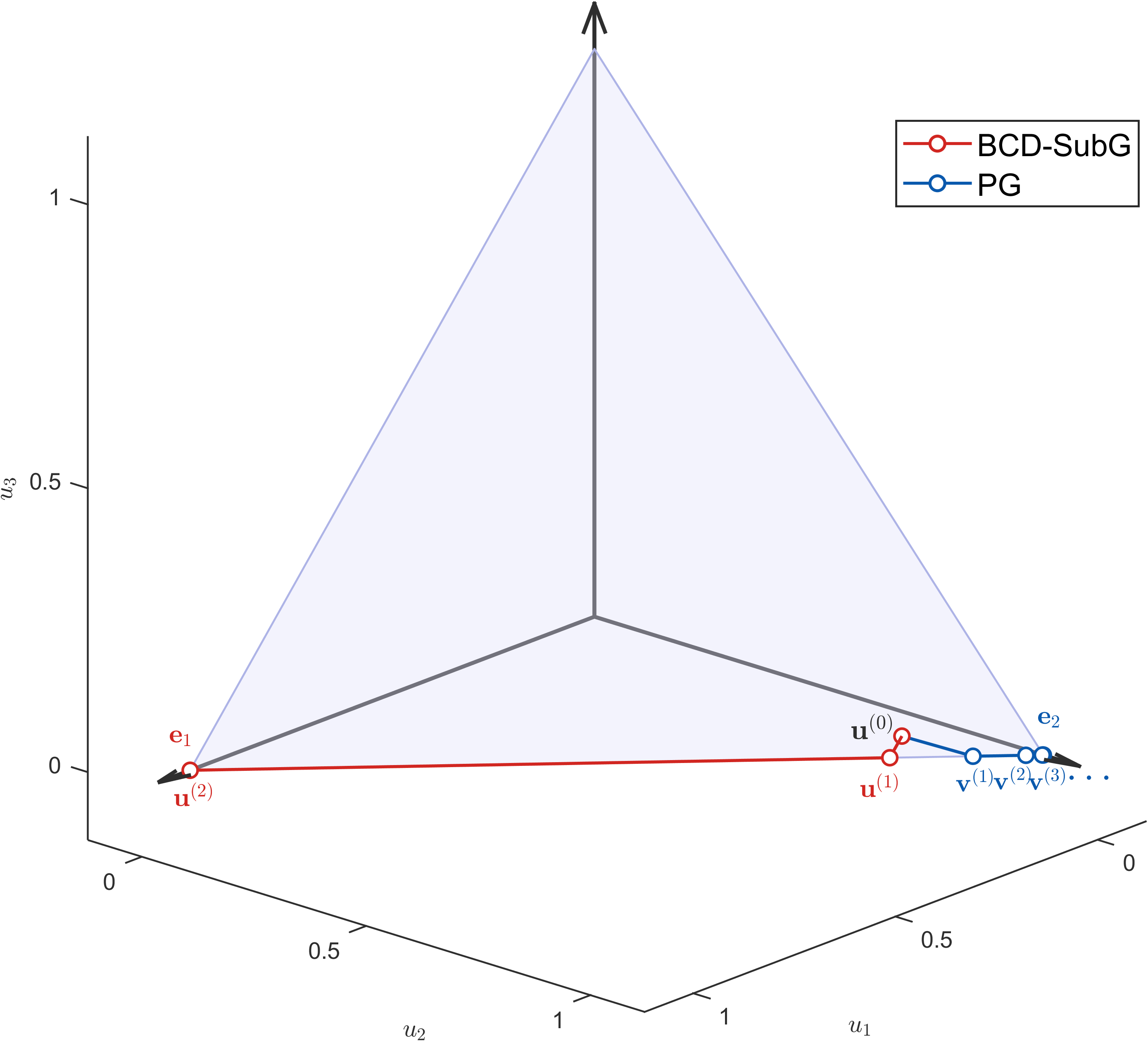}
\caption{Iteration trajectories of PG and BCD-SubG for the illustrative simplex-constrained problem~\eqref{eq:simplex-motivation}.}
\label{fig:simplex-motivation}
\end{figure}

In this paper, our main contributions are as follows:
\begin{itemize}
\item We propose BCD-SubG for solving~\eqref{eq:cost_general}.
The framework uses first-order gradient information to select the updated
block and an ordered coordinate pair. An ECT update transfers mass
between the selected coordinates while preserving simplex feasibility.
\item Under the stated analytic, descent, and zero-step conditions,
we prove that any iterate sequence with an accumulation point
converges as a whole. Additional step-length and regularity conditions,
together with eventual interiority, yield stationarity and convergence
rates governed by the constrained \L{}ojasiewicz exponent.
\item We specialize BCD-SubG to the direct-fit and indirect-fit Semi-NNICA
models and derive explicit update rules.
For both models, we establish global convergence of BCD-SubG
without further assumptions. Under eventual interiority, we also
obtain stationarity of the limit and convergence-rate estimates.
\item Experiments on synthetic mixtures and simulated magnetic
resonance spectroscopy data show generally lower recovery errors
and shorter wall-clock time than the tested Semi-NNICA baselines.
In joint diagonalization, at each tested SNR, one BCD-SubG variant
attains the lowest median final objective value among the compared methods.
\end{itemize}

The rest of the paper is organized as follows. \Cref{sec:prelimi} introduces
the stationarity measure and the \L{}ojasiewicz framework. The general
algorithm and its convergence analysis follow in \cref{sec:bcd-framework}.
\Cref{sec:semi-nnica-direct,sec:semi-nnica} specialize the method to the two
Semi-NNICA models, and \cref{sec:numerical_experiments} reports the numerical
results. \Cref{sec:discussion} concludes the paper.

\section{Preliminaries}\label{sec:prelimi}

\subsection{Notations}

Bold uppercase letters denote matrices and matrix-valued block tuples,
bold lowercase letters denote vectors, and scalars are not bold.
Script letters denote sets; we retain the conventional notation
$\Delta_n$, $\Omega$, and $\mathbb{R}^n$ for the simplex, the second
feasible set, and Euclidean space, respectively.
Matrix entries, such as $A_{i,p}$, are scalars. 
The tuple $\matr{D}=(\matr{D}^{(1)},\ldots,\matr{D}^{(L)})$ is a block
variable, not a tensor variable. The relations $\geq$ and $\succeq$ denote
entrywise and positive-semidefinite inequalities for matrices, respectively.
Throughout this paper, $\|\cdot\|$ denotes the Euclidean norm for vectors
and the Frobenius norm for matrices, unless otherwise specified.
For a block tuple $\matr{X}=(\matr{X}^{(1)},\ldots,\matr{X}^{(r)})$, we set
$\|\matr{X}\|^2\eqdef\sum_{i=1}^{r}\|\matr{X}^{(i)}\|^2$.
The inner product $\langle\cdot,\cdot\rangle$ is the corresponding
Euclidean, Frobenius, or product inner product. For a point $\matr{x}$ and a nonempty set $\mathcal{S}$, we define
$\operatorname{dist}(\matr{x},\mathcal{S})
\eqdef
\inf_{\matr{y}\in\mathcal{S}}\|\matr{x}-\matr{y}\|$.
For a matrix $\matr{X}$, $\vecl(\matr{X})$ denotes its column-wise
vectorization; for a block tuple, $\vecl$ stacks the vectorized blocks.
We identify matrices and block tuples with these Euclidean coordinates
when applying the abstract convergence results.
The vector $\operatorname{diag}(\matr{X})$ contains the diagonal entries
of $\matr{X}$, and $\operatorname{Diag}(\vect{x})$ is the diagonal matrix
with diagonal $\vect{x}$. We set
$\operatorname{offdiag}(\matr{X})
\eqdef\matr{X}-\operatorname{Diag}(\operatorname{diag}(\matr{X}))$.
Finally, $\kron$, $\kr$, and $\hadam$ denote the Kronecker, Khatri--Rao,
and Hadamard products, respectively.

\subsection{Tangent cones and constrained stationarity}

Let $f: \set{D} \to \mathbb{R}$ be continuously differentiable on an open set $\set{D}\subseteq\mathbb{R}^{N}$, and let $\set{M} \subseteq \set{D}$ be a closed feasible set. 
The Bouligand tangent cone of $\set{M}$ at $\vect{x}\in\set{M}$ is
\[
\set{T}_{\vect{x}}\set{M}
\eqdef
\left\{\vect{v}:\ \exists\,t_\nu\downarrow0,\ \vect{x}_\nu\in\set{M},\
\vect{x}_\nu\to\vect{x},\
(\vect{x}_\nu-\vect{x})/t_\nu\to\vect{v}\right\}.
\]
For a general closed set, the tangent cone need not be convex, and
stationarity is measured by $g_f^-$ as defined below.
When $\set{M}$ is also convex, the polar cone of
$\set{T}_{\vect{x}}\set{M}$ is its \emph{normal cone}
$\set{N}_{\vect{x}}\set{M}$ \cite{rockafellar1998variational}.
In this case, the \emph{constrained subdifferential} of $f$ on $\set{M}$ is defined as
$\partial_{\set{M}} f(\vect{x}) \eqdef \nabla f(\vect{x}) + \set{N}_{\vect{x}}\set{M}$.
A point $\vect{x} \in \set{M}$ is a \emph{stationary point} of $f$ on $\set{M}$ if $-\nabla f(\vect{x}) \in \set{N}_{\vect{x}}\set{M}$; equivalently, we have $\vect{0} \in \partial_{\set{M}} f(\vect{x})$. The projection of the origin onto
$\partial_{\set{M}} f(\vect{x})$, denoted by
$\Proj{\partial_{\set{M}} f(\vect{x})}(\vect{0})$, is the unique
minimum-norm element of $\partial_{\set{M}} f(\vect{x})$ and is referred
to as the \emph{minimum-norm subgradient}. Hence, $\|
\Proj{\partial_{\set{M}} f(\vect{x})}(\vect{0})
\|
=
\operatorname{dist}\!\left(
\vect{0},\partial_{\set{M}} f(\vect{x})
\right)$. Moreover, Moreau's decomposition gives $\Proj{\partial_{\set{M}} f(\vect{x})}(\vect{0})
=
-\Proj{\set{T}_{\vect{x}}\set{M}}
\bigl(-\nabla f(\vect{x})\bigr)$. Consequently,
$-\Proj{\partial_{\set{M}} f(\vect{x})}(\vect{0})
\in \set{T}_{\vect{x}}\set{M}$,
showing that the negative minimum-norm subgradient is a first-order
feasible direction at $\vect{x}\in\set{M}$.

We now consider the $\set{M}=\Delta_n$ case. For any $\vect{u} \in \Delta_n$, the constrained subdifferential of $f$ on $\Delta_n$ can be defined as
\begin{align}\label{eq:subdifferential_simplex}
\partial_{\Delta_n} f(\vect{u}) = \nabla f(\vect{u}) + \set{N}_{\vect{u}}\Delta_n  =\left\{\vect{g}+ \mu \vect{1} - \sum_{i=1}^n \gamma_i \vect{e}_i \;\middle|\; \mu \in \mathbb{R},\; \gamma_i \geq 0,\; \gamma_i u_i = 0 , \; 1\leq i\leq n \right\},
\end{align}
where $\vect{g}=\nabla f(\vect{u})=(g_1,\ldots,g_n)^{\T}\in\mathbb{R}^n$ is the Euclidean gradient of $f$ at $\vect{u}$, $\vect{1} \in \mathbb{R}^n$ denotes the vector of all ones, and $\vect{e}_i \in \mathbb{R}^n$ is the $i$-th standard basis vector. 
The minimum-norm subgradient $\Proj{\partial_{\Delta_n} f(\vect{u})}(\vect{0})$, denoted by $\vect{\xi}_{\min}=(\xi_1,\ldots,\xi_n)^{\T}\in \mathbb{R}^n$, is the unique minimum-norm element of $\partial_{\Delta_n} f(\vect{u})$ characterized in \eqref{eq:subdifferential_simplex}. The complementarity condition $\gamma_i u_i = 0$ forces $\gamma_i^* = 0$ when $u_i>0$. For $u_i = 0$, minimizing the squared magnitude $(g_i + \mu - \gamma_i)^2$ subject to $\gamma_i \geq 0$ requires projecting $(g_i + \mu^*)$ onto the non-negative orthant, yielding $\gamma_i^* = \max(0,\, g_i + \mu^*)$ for a given $\mu^*$.
Consequently, $\vect{\xi}_{\min}$ admits the following componentwise form: 
\begin{equation}\label{eq:eta_form}
\xi_i =
\begin{cases}
g_i + \mu^*, & \text{if } u_i > 0, \\
\min(0,\, g_i + \mu^*), & \text{if } u_i = 0,
\end{cases}
\end{equation}
where the dual variable $\mu^* \in \mathbb{R}$ is determined by $\sum_{i=1}^n \xi_i = 0$ to ensure $-\vect{\xi}_{\min} \in \set{T}_{\vect{u}}\Delta_n$.

\subsection{\L{}ojasiewicz inequality}
Let $f: \set{D} \to \mathbb{R}$ be continuously differentiable on an open set $\set{D}\subseteq\mathbb{R}^{N}$, and let $\set{M} \subseteq \set{D}$ be a closed feasible set. 
As in \cite{schneider2015convergence}, we define $g^{-}_f(\vect{x})$, the norm of any metric projection of $-\nabla f(\vect{x})$ onto $\set{T}_{\vect{x}}\set{M}$, by
\begin{equation*}
g^{-}_f(\vect{x}) \eqdef \sqrt{ \left\|\nabla f(\vect{x})\right\|^2 - \operatorname{dist}\!\left(-\nabla f(\vect{x}), \set{T}_{\vect{x}}\set{M}\right)^2 }.
\end{equation*}
For a general closed feasible set $\set{M}$, we call
$\vect{x}\in\set{M}$ \emph{stationary} if $g^-_f(\vect{x})=0$.
In the unconstrained setting, this measure is the Euclidean gradient
norm $\|\nabla f(\vect{x})\|$. On a closed convex set, it equals
$\operatorname{dist}\!\left(\vect{0},\partial_{\set{M}}f(\vect{x})\right)$.
In particular, on a linear subspace it is the norm of the Euclidean
gradient of the restricted function.

The next lemma, adapted from \cite[Prop.~2.2]{schneider2015convergence}, extends the analytic \L{}ojasiewicz inequality to closed
feasible sets that admit a suitable local analytic parametrization.
\begin{lemma}[Constrained analytic \L{}ojasiewicz inequality]
\label{lem:nonsmooth-lojasiewicz}
Let $\set{D}\subseteq\mathbb{R}^{N}$ be open, let
$\set{M}\subseteq\set{D}$ be closed, and let
$f:\set{D}\to\mathbb{R}$ be real analytic. Fix
$\vect{x}^*\in\set{M}$. Assume that there exist an integer $q\geq1$,
an open set $\set{U}\subseteq\mathbb{R}^{q}$, a point
$\vect{z}^*\in\set{U}$, and a componentwise real-analytic map
$\tau:\set{U}\to\set{D}$ such that
\begin{itemize}
\item[(i)] $\tau(\set{U})\subseteq\set{M}$ and
$\tau(\vect{z}^*)=\vect{x}^*$;
\item[(ii)] for every open neighborhood
$\set{U}'\subseteq\set{U}$ of $\vect{z}^*$,
$\tau(\set{U}')$ contains a relatively open neighborhood of
$\vect{x}^*$ in $\set{M}$.
\end{itemize}
Then there exist $r_{\mathrm{L}},C_{\mathrm{L}}>0$ and $\theta\in(0,1/2]$ such that
every $\vect{x}\in\set{M}$ with
$\|\vect{x}-\vect{x}^*\|<r_{\mathrm{L}}$ satisfies
\begin{equation}\label{eq:lojasiewicz}
\left|f(\vect{x}) - f(\vect{x}^*)\right|^{1-\theta} \leq C_{\mathrm{L}}\,g^{-}_f(\vect{x}).\tag{\textbf{\L{}}}
\end{equation}   
\end{lemma}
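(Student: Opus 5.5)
The plan is to pull the problem back to an unconstrained one on $\set{U}$ through $\tau$ and apply the classical analytic \L{}ojasiewicz gradient inequality there. Set $h\eqdef f\circ\tau:\set{U}\to\mathbb{R}$. Since $f$ and $\tau$ are real analytic, so is $h$. By the classical \L{}ojasiewicz theorem there are $\rho>0$, $C_0>0$ and $\theta\in(0,1/2]$ such that
\[
|h(\vect{z})-h(\vect{z}^*)|^{1-\theta}\le C_0\|\nabla h(\vect{z})\|
\]
for all $\vect{z}$ in the ball $B(\vect{z}^*,\rho)\subseteq\set{U}$. If needed, I would shrink $\theta$ into $(0,1/2]$, which is harmless near a point where $|h-h(\vect{z}^*)|<1$. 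Note that $h(\vect{z}^*)=f(\vect{x}^*)$.

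Next I would relate $\nabla h$ to $g^-_f$. By the chain rule, $\nabla h(\vect{z})=D\tau(\vect{z})^{\T}\nabla f(\tau(\vect{z}))$. Shrinking $\rho$, $\|D\tau\|\le K$ on $B(\vect{z}^*,\rho)$. The main point is that every column direction $\vect{w}=D\tau(\vect{z})\vect{d}$ lies in $\set{T}_{\tau(\vect{z})}\set{M}$. This holds because $\tau(\vect{z}+t\vect{d})\in\set{M}$ for small $t>0$ and $(\tau(\vect{z}+t\vect{d})-\tau(\vect{z}))/t\to\vect{w}$; the cone also contains $-\vect{w}$ by taking $t<0$.

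For any $\vect{w}\in\set{T}_{\vect{x}}\set{M}$ I would then show $\langle-\nabla f(\vect{x}),\vect{w}\rangle\le g^-_f(\vect{x})\|\vect{w}\|$. This is the step I expect to need care, since the cone is not convex. I would first restrict to unit $\vect{w}$, using that the tangent cone is closed under nonnegative scaling. Set $\vect{y}=-\nabla f(\vect{x})$ and let $s=\langle\vect{y},\vect{w}\rangle>0$. The point $s\vect{w}$ lies in the cone, so
\[
\operatorname{dist}(\vect{y},\set{T}_{\vect{x}}\set{M})^2\le\|\vect{y}-s\vect{w}\|^2=\|\vect{y}\|^2-s^2 .
\]
Hence $s^2\le (g^-_f)^2$. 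Applying this with $\pm D\tau(\vect{z})\vect{d}$, both of which are in the cone, gives $|\langle\nabla f,D\tau\,\vect{d}\rangle|\le g^-_f\,K\|\vect{d}\|$. Therefore $\|\nabla h(\vect{z})\|\le K\,g^-_f(\tau(\vect{z}))$.

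Finally, I would transfer the inequality to points of $\set{M}$. By (ii) applied to $\set{U}'=B(\vect{z}^*,\rho)$, there is $r_{\mathrm L}>0$ such that every $\vect{x}\in\set{M}$ with $\|\vect{x}-\vect{x}^*\|<r_{\mathrm L}$ equals $\tau(\vect{z})$ for some $\vect{z}\in B(\vect{z}^*,\rho)$. For such $\vect{x}$,
\[
|f(\vect{x})-f(\vect{x}^*)|^{1-\theta}=|h(\vect{z})-h(\vect{z}^*)|^{1-\theta}\le C_0\|\nabla h(\vect{z})\|\le C_0K\,g^-_f(\vect{x}).
\]
This gives \eqref{eq:lojasiewicz} with $C_{\mathrm L}=C_0K$. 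If $K=0$, any positive constant works. The only delicate points are the nonconvex-cone projection estimate and making sure the preimage point lies in the ball where both the \L{}ojasiewicz inequality and the bound on $D\tau$ hold; hypothesis (ii) is exactly what guarantees the latter.
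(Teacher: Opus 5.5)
Your argument is correct and essentially reproduces the proof of the result the paper cites. The paper gives no proof of its own and defers to \cite[Prop.~2.2]{schneider2015convergence}, which likewise applies the classical \L{}ojasiewicz inequality to $f\circ\tau$ at $\vect{z}^*$, bounds $\|\nabla(f\circ\tau)(\vect{z})\|\leq\|D\tau(\vect{z})\|\,g^-_f(\tau(\vect{z}))$ because $\pm D\tau(\vect{z})\vect{d}$ lie in the tangent cone, and uses condition~(ii) to cover all nearby points of $\set{M}$; your derivation of $\langle-\nabla f(\vect{x}),\vect{w}\rangle\leq g^-_f(\vect{x})\|\vect{w}\|$ from the cone property alone, without convexity, is exactly the ingredient needed there.
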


Using the above \cref{lem:nonsmooth-lojasiewicz} and the 
\L{}ojasiewicz inequality in 
\eqref{eq:lojasiewicz}, the following convergence theorem was established in \cite[Thm. 2.3]{schneider2015convergence}.

\begin{theorem}[Abstract global convergence]\label{thm:Schneider_frame}
Let $\set{M} \subseteq \set{D}$ be a closed set and $f: \set{D}\to \mathbb{R}$ be a real-analytic function. 
Suppose that for large enough $k$, the sequence $\{\vect{x}_k\}\subseteq \set{M}$ satisfies the following conditions:
\begin{itemize}
\item[(i)] There exists $\eta > 0$ such that: 
\begin{equation*}\label{cond:A1}
f(\vect{x}_{k-1}) - f(\vect{x}_{k}) \geq \eta\,g^{-}_f(\vect{x}_{k-1}) \left\|\vect{x}_{k} - \vect{x}_{k-1}\right\|. \tag{\textbf{A1}}
\end{equation*}
\item[(ii)]\leavevmode\vspace{-\baselineskip}
\begin{equation*}\label{cond:A2}
g^{-}_f(\vect{x}_{k-1})=0
\quad\Longrightarrow\quad
\vect{x}_{k}=\vect{x}_{k-1}.\tag{\textbf{A2}}
\end{equation*}
\end{itemize}
If the sequence $\{\vect{x}_k\}$ has an accumulation point 
$\vect{x}^*\in\set{M}$ satisfying \eqref{eq:lojasiewicz}, then the whole sequence converges to $\vect{x}^*$. 
Furthermore, suppose that the following additional
condition is satisfied:
\begin{itemize}
\item[(iii)] There exists $\kappa > 0$ such that: 
\begin{equation*}\label{cond:A3}
\left\|\vect{x}_{k} - \vect{x}_{k-1}\right\| \geq \kappa\,g^{-}_f(\vect{x}_{k-1}). \tag{\textbf{A3}}
\end{equation*}
\end{itemize}
Then $g^{-}_f(\vect{x}_k)\to 0$, and the convergence rate
can be estimated by
\begin{equation}\label{eq:conver_rate}
\left\|\vect{x}_k - \vect{x}^*\right\| \leq 
\begin{cases} 
C_{\mathrm{rate}}e^{-c_{\mathrm{rate}}k}, & \text{if } \theta = \frac{1}{2} \text{ (linear convergence)}, \\ 
C_{\mathrm{rate}}k^{-\frac{\theta}{1-2\theta}}, & \text{if } 0 < \theta < \frac{1}{2} \text{ (sublinear convergence)},
\end{cases}
\end{equation}
where $C_{\mathrm{rate}},c_{\mathrm{rate}}>0$ are independent of $k$, and $\theta$ denotes the \L{}ojasiewicz exponent of $f$ at $\vect{x}^*$ as defined in 
\eqref{eq:lojasiewicz}. 
\end{theorem}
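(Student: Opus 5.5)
The plan is the standard \L{}ojasiewicz finite-length argument in the style of Absil--Mahony--Andrews and \cite{schneider2015convergence}. Let $\vect{x}^*$ be the accumulation point and let $r_{\mathrm{L}},C_{\mathrm{L}},\theta$ be as in \eqref{eq:lojasiewicz}.

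\textbf{Monotonicity and stationary case.} Condition \eqref{cond:A1} shows that $f(\vect{x}_k)$ is nonincreasing for large $k$. Since $f(\vect{x}_{k_j})\to f(\vect{x}^*)$ along a subsequence and $f$ is continuous, we get $f(\vect{x}_k)\downarrow f(\vect{x}^*)$, so $f(\vect{x}_k)\geq f(\vect{x}^*)$ for all large $k$. If $f(\vect{x}_K)=f(\vect{x}^*)$ for some $K$, then monotonicity forces equality from then on, so \eqref{cond:A1} gives $g^-_f(\vect{x}_{k-1})\|\vect{x}_k-\vect{x}_{k-1}\|=0$. Each factor case gives $\vect{x}_k=\vect{x}_{k-1}$, the first via \eqref{cond:A2}. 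The sequence is then eventually constant and equals $\vect{x}^*$. We may therefore assume $f(\vect{x}_k)>f(\vect{x}^*)$ for all large $k$.

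\textbf{Finite-length estimate near $\vect{x}^*$.} Set $\phi(s)=s^\theta/\theta$, which is concave on $[0,\infty)$. Whenever $\vect{x}_{k-1}$ lies in the ball $B(\vect{x}^*,r_{\mathrm{L}})$, concavity gives
\[
\phi(f(\vect{x}_{k-1})-f^*)-\phi(f(\vect{x}_k)-f^*)\geq (f(\vect{x}_{k-1})-f^*)^{\theta-1}\bigl(f(\vect{x}_{k-1})-f(\vect{x}_k)\bigr).
\]
Combine this with \eqref{cond:A1} and \eqref{eq:lojasiewicz}, which give $(f(\vect{x}_{k-1})-f^*)^{\theta-1}\geq 1/(C_{\mathrm{L}}g^-_f(\vect{x}_{k-1}))$. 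This yields
\[
\|\vect{x}_k-\vect{x}_{k-1}\|\leq \tfrac{C_{\mathrm{L}}}{\eta}\bigl[\phi(f(\vect{x}_{k-1})-f^*)-\phi(f(\vect{x}_k)-f^*)\bigr].
\]
The case $g^-_f(\vect{x}_{k-1})=0$ is trivial by \eqref{cond:A2}.

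\textbf{Trapping argument.} This is the main obstacle, since the estimate only holds while the iterates stay in the ball. Choose $\epsilon<r_{\mathrm{L}}$ and an index $K$ (from the subsequence) such that $\|\vect{x}_K-\vect{x}^*\|<\epsilon/3$ and $\frac{C_{\mathrm{L}}}{\eta}\phi(f(\vect{x}_K)-f^*)<\epsilon/3$. We also need single steps to be small. A single step starting inside the ball is bounded by the telescoped quantity, so it is automatically small. By induction, every $\vect{x}_k$ with $k\geq K$ then satisfies
\[
\|\vect{x}_k-\vect{x}^*\|\leq \|\vect{x}_K-\vect{x}^*\|+\sum_{i>K}\|\vect{x}_i-\vect{x}_{i-1}\|<2\epsilon/3,
\]
so the iterates never leave the ball. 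The total length is finite, hence the sequence is Cauchy and converges. Its limit must be the accumulation point $\vect{x}^*$.

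\textbf{Rates.} Under \eqref{cond:A3}, the first display together with \eqref{cond:A1} gives $f(\vect{x}_{k-1})-f(\vect{x}_k)\geq\eta\kappa\,g^-_f(\vect{x}_{k-1})^2$. Since the left side tends to $0$, we get $g^-_f(\vect{x}_k)\to0$. Let $r_k=\sum_{i>k}\|\vect{x}_i-\vect{x}_{i-1}\|$, which bounds $\|\vect{x}_k-\vect{x}^*\|$. The finite-length estimate gives $r_k\leq\frac{C_{\mathrm{L}}}{\eta}\phi(f(\vect{x}_k)-f^*)$. Applying \eqref{eq:lojasiewicz} and then \eqref{cond:A3} gives
\[
(f(\vect{x}_k)-f^*)^{1-\theta}\leq C_{\mathrm{L}}g^-_f(\vect{x}_k)\leq \tfrac{C_{\mathrm{L}}}{\kappa}(r_k-r_{k+1}).
\]
Together these give the recursion $r_k\leq C(r_k-r_{k+1})^{\theta/(1-\theta)}$. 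For $\theta=1/2$ this reads $r_{k+1}\leq(1-1/C)r_k$, which is geometric decay. For $\theta<1/2$, the standard comparison with the ODE $\dot r=-c\,r^{(1-\theta)/\theta}$ gives $r_k\lesssim k^{-\theta/(1-2\theta)}$.
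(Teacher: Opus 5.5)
Your proposal is correct and follows essentially the same route as the paper. The paper does not reprove this result but takes it from \cite[Thm.~2.3]{schneider2015convergence}, whose proof is this same \L{}ojasiewicz argument: monotonicity, the concavity estimate for $s\mapsto s^\theta/\theta$, trapping in the \L{}ojasiewicz ball to obtain finite length, and then the recursion $r_k\leq C(r_k-r_{k+1})^{\theta/(1-\theta)}$ for the rates (with the trapping induction run on the partial sums $\sum_{i=K+1}^{k}$ rather than the full tail, as your argument effectively does).
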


\section{General BCD-SubG framework}\label{sec:bcd-framework}

%\subsection{Model Reformulation}

\subsection{Elementary column transformations}

Let $(i,j)$ satisfy $1\leq i,j\leq n$ and $i\neq j$. For
$\alpha\in\mathbb{R}$, define
\begin{equation}
\matr{T}_{i,j}(\alpha) \eqdef 
\matr{I}_n+\alpha(\vect{e}_i-\vect{e}_j)\vect{e}_j^{\T}.
\label{eq:elementary_column-stochastic}
\end{equation}
Every column of $\matr{T}_{i,j}(\alpha)$ sums to one, so we call it an
\emph{ECT}. For
$\alpha\in[0,1]$, the matrix is also column-stochastic. Let
$\vect{u}=(u_1,\ldots,u_n)^{\T}\in\Delta_n$ and
$\vect{u}'=\matr{T}_{i,j}(\alpha)\vect{u}$. Its entries satisfy
\begin{equation}\label{eq:ij_transformation}
u'_{k} = 
\begin{cases} 
u_{i} + \alpha u_{j}, &\text{if } k = i, \\ 
(1 - \alpha)u_{j}, &\text{if } k = j, \\ 
u_{k}, &\text{if } k \neq i, j. 
\end{cases}
\end{equation}
Suppose $a\eqdef u_{i} \geq 0$ and $b\eqdef u_{j} > 0$. It follows directly that $\vect{u}' \in \Delta_{n}$ if and only if $\alpha \in \left[-\frac{a}{b}, 1\right]$.  
In addition, when $\alpha<0$, we set $\widetilde{\alpha}=-\frac{u_j}{u_i}\alpha\in (0,1]$. Then $\matr{T}_{i,j}(\alpha)$ and $\matr{T}_{j,i}(\widetilde{\alpha})$ produce the same update. 
Hence, every feasible ECT transformation admits an equivalent representation 
with a nonnegative coefficient when interchanging the indices \(i\) and \(j\). 

Beyond preserving feasibility, ECT transformations also provide a \emph{finite connectivity} property on the simplex.
More precisely, let $\vect{u},\vect{v}\in\Delta_n$ with
$\vect{u}\neq\vect{v}$. Then there exist a suitable index $i$ with
$u_i\neq v_i$, an index $j\neq i$, and a feasible parameter $\alpha$
such that $\bigl(\matr{T}_{i,j}(\alpha)\vect{u}\bigr)_i=v_i$. 
By applying this argument successively to the unmatched coordinates, for any $\vect{u},\vect{v}\in\Delta_n$, there exist index
pairs $(i_1,j_1),\ldots,(i_q,j_q)$ and feasible parameters
$\alpha_1,\ldots,\alpha_q$, with $q\leq n-1$, such that
$\vect{v}
=
\matr{T}_{i_q,j_q}(\alpha_q)
\cdots
\matr{T}_{i_1,j_1}(\alpha_1)\vect{u}.$ 
This finite connectivity property provides a theoretical justification for us to employ ECT transformations to perform updates on $\Delta_n$.

When requiring $\alpha\in[0,1]$, the ECT transformation in \eqref{eq:elementary_column-stochastic} can be interpreted as a pairwise mass-redistribution step in a Markov process~\cite{seneta2006non}, where such transformations are typically used to describe the evolution of a probability distribution over states and to characterize its stationary or long-term stochastic behavior.
However, the purpose of introducing ECT transformations in this paper is different. Rather than modeling stochastic dynamics, we employ them as structured update operators to optimize the objective function in \eqref{eq:cost_general}.

We now introduce the definition of the elementary functions associated with the ECT transformations. 
Let $f : \Delta_{n}^{m} \times \Omega \to \mathbb{R}$ be defined as in the general model~\eqref{eq:cost_general}, and 
$(\matr{U}, \matr{V}) \in \Delta_{n}^{m} \times \Omega$.
We denote the two \emph{restricted functions} by
$f_{\matr{V}}:\matr{U}\mapsto f(\matr{U},\matr{V})$ and
$f_{\matr{U}}:\matr{V}\mapsto f(\matr{U},\matr{V})$.
For each $1\leq p\leq m$, we define the restriction of $f_{\matr{V}}$ to the $p$-th block variable as
\begin{equation}\label{eq:p_block}
f_{(p;\matr{U}^{(-p)},\matr{V})} : \Delta_{n}\to\mathbb{R},\ \ 
\vect{u}\mapsto f_{\matr{V}}\left(
\vect{u}^{(1)},\ldots,\vect{u}^{(p-1)},
\vect{u},
\vect{u}^{(p+1)},\ldots,\vect{u}^{(m)}
\right),
\end{equation}
where
$\matr{U}^{(-p)}
\eqdef
\left(
\vect{u}^{(1)},\ldots,\vect{u}^{(p-1)},
\vect{u}^{(p+1)},\ldots,\vect{u}^{(m)}
\right)
\in \Delta_n^{m-1}$.
When optimizing $f_{(p;\matr{U}^{(-p)},\matr{V})}$ via the ECT transformation process in \eqref{eq:elementary_column-stochastic}, the update along a given pair $(i,j)$ can be characterized by a one-dimensional parametric function with respect to $\alpha$. Specifically, for a current iterate $\vect{u}^{(p)} \in \Delta_{n}$, we define the associated \emph{elementary function} as
\begin{equation}
h^{(i,j)}_{(\matr{U}^{(-p)}, \vect{u}^{(p)}, \matr{V})}(\alpha) \eqdef 
f_{(p;\matr{U}^{(-p)},\matr{V})}\left(\matr{T}_{i,j}(\alpha)\vect{u}^{(p)}\right).
\label{eq:elementary_function}
\end{equation}
For simplicity, we write \eqref{eq:p_block} as $f_{(p)}(\vect{u}^{(p)})$ and \eqref{eq:elementary_function} as $h^{(i,j)}_{(p)}(\alpha)$ when the restricted variables are clear from the context. Throughout this paper, $\partial h^{(i,j)}_{(p)}(\alpha)$ denotes the
constrained subdifferential of $h^{(i,j)}_{(p)}$ over its feasible
interval; that is, we have
\begin{equation}\label{eq:elementary_subdifferential}
    \partial h^{(i,j)}_{(p)}(\alpha)
\eqdef
\partial_{[-u_i^{(p)}/u_j^{(p)},\,1]}
h^{(i,j)}_{(p)}(\alpha).
\end{equation}

For an elementary function $h(\alpha)$ associated with the index pair $(i,j)$,
$\left[-\frac{a}{b},1\right]$ is the feasible
interval induced by the ECT transformation in \eqref{eq:ij_transformation}. We consider the following two step-size
selection rules:
\begin{itemize}
\item[\hypertarget{rule:R1}{(R1)}] \emph{Exact line search:}
If $\operatorname{dist}\!\left(0,\partial h(0)\right)=0$, we set $\alpha^*=0$.
Otherwise, $\alpha^*$ is chosen as a global minimizer of $h$ over
$\left[-\frac{a}{b},1\right]$.

\item[\hypertarget{rule:R2}{(R2)}] \emph{First local line search:}
If $\operatorname{dist}\!\left(0,\partial h(0)\right)=0$, we set $\alpha^*=0$.
Otherwise, $\alpha^*$ is chosen as the first local minimizer of $h$
encountered from $\alpha=0$ along the descent direction determined by
$h'(0)$ over $\left[-\frac{a}{b},1\right]$.
\end{itemize}

\begin{remark}
When employing ECT transformations to minimize the objective function in \eqref{eq:cost_general}, a natural idea is to apply them directly to the entire product space $\Delta_n^m$, rather than updating one column at a time. 
A major difficulty with this approach is that, for $m\geq2$, applying the same ECT to all columns does not connect
arbitrary pairs of points in $\Delta_n^m$. 
Indeed, every common left
multiplication preserves equality between columns. Hence, a matrix
with two identical columns cannot be transformed into one in which
the corresponding columns differ. This motivates updating the
simplex factors separately.
\end{remark}

\subsection{BCD-SubG algorithm}

We solve \eqref{eq:cost_general} with the BCD-SubG framework.
At each iteration, a block is first selected according to first-order stationarity measures. 
If the $\matr{U}$-block is selected, an index pair $(i,j)$ is determined by enforcing an additional subgradient inequality condition. 
Then an update is performed via the ECT transformation $\matr{T}_{i,j}(\alpha)$. 
The resulting algorithm is described in \cref{alg:gradient-based}.

\begin{algorithm}[H]
\caption{BCD-SubG algorithm.}
\label{alg:gradient-based}
\KwIn{A function $f : \Delta_{n}^{m} \times \Omega \to \mathbb{R}$ as in the general model \eqref{eq:cost_general}, a parameter $0 < \delta \leq 1/[n\sqrt{m(n-1)}]$, and a starting point $(\matr{U}_{0}, \matr{V}_{0})\in\Delta_n^m\times\Omega$, where $\matr{U}_{0}=\bigl(\vect{u}_{0}^{(1)},\ldots,\vect{u}_{0}^{(m)}\bigr)$.}
\KwOut{Sequence of iterations $\{(\matr{U}_{k}, \matr{V}_{k})\}$.}
\begin{itemize}
\item \textbf{For} $k = 1, 2, \dots$ [until a stopping criterion is satisfied] do
\item If 
\begin{align}\label{eq:update_selection_inequality}
g^{-}_{f_{\matr{U}_{k-1}}}(\matr{V}_{k-1})
\;\geq\;
\operatorname{dist}\!\left(\matr{0},\,\partial_{\Delta_{n}^{m}} f_{\matr{V}_{k-1}}(\matr{U}_{k-1})\right),
\end{align}
we set $\matr{U}_{k}=\matr{U}_{k-1}$, and perform an update over $\Omega$ to obtain $\matr{V}_{k}$. 
\item Otherwise, we set $\matr{V}_{k}=\matr{V}_{k-1}$, and perform an update over $\Delta_{n}^{m}$.
\begin{itemize}
\item[$\bullet$] Choose $p_k\in\{1,\ldots,m\}$ and an index pair
$(i_k,j_k)$ with $i_k\neq j_k$ and
$(\vect{u}_{k-1}^{(p_k)})_{j_k}>0$ such that
\begin{align}\label{eq:grad_inequality_h}
\operatorname{dist}\!\left(0,\,\partial h_{k}(0)\right)\geq \delta 
\operatorname{dist}\!\left(\matr{0},\,\partial_{\Delta_{n}^{m}} f_{\matr{V}_{k-1}}(\matr{U}_{k-1})\right),
\end{align}
where $h_k(\alpha)\eqdef h^{(i_k,j_k)}_{(\matr{U}_{k-1}^{(-p_k)}, \vect{u}_{k-1}^{(p_k)}, \matr{V}_{k-1})}(\alpha)$. 
\item[$\bullet$] Compute $\alpha_k^*$ by applying
\hyperlink{rule:R1}{Rule~(R1)} or
\hyperlink{rule:R2}{Rule~(R2)} to $h_k$.
\item[$\bullet$] Set $\matr{U}^{(-p_k)}_k=\matr{U}^{(-p_k)}_{k-1}$ and update $\vect{u}^{(p_k)}_k=\matr{T}_{i_k,j_k}(\alpha_k^*)\vect{u}^{(p_k)}_{k-1}$.
\end{itemize}
\item \textbf{End for}
\end{itemize}
\end{algorithm}

\begin{proposition}[Well-defined block and pair selection]
\label{prop:block-pair-existence}
For every $0<\delta\leq1/[n\sqrt{m(n-1)}]$, there exist $p_k$ and
$(i_k,j_k)$ satisfying \eqref{eq:grad_inequality_h}.
\end{proposition}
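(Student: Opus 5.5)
The plan is to reduce the statement to a single column via the product structure of $\Delta_n^m$. Then, inside that column, I compare the minimum-norm subgradient with an explicit, generally non-minimal element of the constrained subdifferential whose entries are exactly the quantities $\operatorname{dist}(0,\partial h^{(i,j)}_{(p)}(0))/u_j$.

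\emph{Step 1 (block reduction).} Write $\matr{U}=\matr{U}_{k-1}$ and $\matr{V}=\matr{V}_{k-1}$. Tangent and normal cones of a product of closed convex sets are products, so
\[
\partial_{\Delta_n^m} f_{\matr{V}}(\matr{U})=\prod_{p=1}^m \partial_{\Delta_n} f_{(p)}(\vect{u}^{(p)}).
\]
The minimum-norm element is therefore taken blockwise, and
\[
\operatorname{dist}\bigl(\matr{0},\partial_{\Delta_n^m} f_{\matr{V}}(\matr{U})\bigr)^2=\sum_{p=1}^{m}\|\vect{\xi}^{(p)}_{\min}\|^2 .
\]
Choose $p$ with $\|\vect{\xi}^{(p)}_{\min}\|\geq m^{-1/2}\operatorname{dist}\bigl(\matr{0},\partial_{\Delta_n^m} f_{\matr{V}}(\matr{U})\bigr)$. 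If this total distance is $0$, any $p$ and any pair $(i,j)$ with $u_j>0$ works, since $n\geq2$; so assume it is positive.

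\emph{Step 2 (choice of $j$ and the elementary subdifferential).} Write $\vect{u}=\vect{u}^{(p)}$ and $\vect{g}=\nabla f_{(p)}(\vect{u})$. Since $\sum_i u_i=1$, pick $j$ with $u_j\geq 1/n>0$. By \eqref{eq:ij_transformation}, $h^{(i,j)}_{(p)}{}'(0)=(g_i-g_j)u_j$, and the feasible interval is $[-u_i/u_j,1]$.
\begin{itemize}
\item If $u_i>0$, then $0$ is interior to this interval, so $\operatorname{dist}(0,\partial h(0))=u_j|g_i-g_j|$.
\item If $u_i=0$, the interval is $[0,1]$ and the normal cone at $0$ is $(-\infty,0]$, so $\operatorname{dist}(0,\partial h(0))=u_j\max(0,g_j-g_i)$.
\end{itemize}

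\emph{Step 3 (key comparison, the main point).} Define $\vect{z}\in\mathbb{R}^n$ by $z_i=g_i-g_j$ if $u_i>0$ and $z_i=\min(0,g_i-g_j)$ if $u_i=0$. Taking $\mu=-g_j$ and $\gamma_i=\max(0,g_i-g_j)$ on the zero coordinates in \eqref{eq:subdifferential_simplex} shows that $\vect{z}\in\partial_{\Delta_n} f_{(p)}(\vect{u})$. Hence $\|\vect{\xi}^{(p)}_{\min}\|\leq\|\vect{z}\|$. Moreover $z_j=0$, and by Step 2, $|z_i|=\operatorname{dist}(0,\partial h^{(i,j)}_{(p)}(0))/u_j$ for every $i\neq j$. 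This certificate is the step that needs care. The naive attempt of shifting $\vect{\xi}_{\min}$ by its $j$-th entry fails on zero coordinates with $g_i+\mu^*>0$, and the remedy is to rebuild a feasible subgradient directly with the multiplier $\mu=-g_j$.

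\emph{Step 4 (conclusion).} Since $\vect{z}$ has at most $n-1$ nonzero entries, choose $i\neq j$ maximizing $|z_i|$, so that $\|\vect{z}\|\leq\sqrt{n-1}\,|z_i|$. Combining the previous steps,
\[
\operatorname{dist}\bigl(0,\partial h^{(i,j)}_{(p)}(0)\bigr)=u_j|z_i|\geq\frac{\|\vect{\xi}^{(p)}_{\min}\|}{n\sqrt{n-1}}\geq\frac{\operatorname{dist}\bigl(\matr{0},\partial_{\Delta_n^m}f_{\matr{V}}(\matr{U})\bigr)}{n\sqrt{m(n-1)}} .
\]
The right-hand side is at least $\delta$ times the total distance for every admissible $\delta$, which gives \eqref{eq:grad_inequality_h} with $p_k=p$ and $(i_k,j_k)=(i,j)$.
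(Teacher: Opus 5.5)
Your proof is correct and attains the same constant $1/[n\sqrt{m(n-1)}]$. After the same block reduction, however, your pair-selection step follows a different route from the paper's.

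The paper works with $\vect{\xi}_{\min}^{(p)}$ itself:
it forms the rank-one matrix $\matr{M}=\vect{\xi}_{\min}^{(p)}(\vect{u}^{(p)})^{\T}$, uses $\sum_i\xi_i^{(p)}=0$ to get $\sum_{i\neq j}(M_{i,j}-M_{j,j})^2\geq\sum_i M_{i,j}^2$ for each column $j$, averages over all $n(n-1)$ ordered pairs, and uses $\|\vect{u}^{(p)}\|\geq1/\sqrt{n}$.
This only bounds the surrogate $|\langle\vect{\xi}_{\min}^{(p)},\matr{T}'_{i,j}(0)\vect{u}^{(p)}\rangle|$, which in general differs from $\operatorname{dist}(0,\partial h^{(i,j)}_{(p)}(0))$ when $u_i^{(p)}=0$.
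The paper therefore needs \cref{lem:max_ijp_existence}, with its case analysis at $u_{i^*}^{(p)}=0$, to show that the two quantities coincide at a maximizing pair.

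You instead fix a donor with $u_j\geq1/n$ and compare $\vect{\xi}_{\min}^{(p)}$ with the explicit competitor $\vect{z}$ built from the multiplier $\mu=-g_j$.
Its entries equal the elementary distances divided by $u_j$ for \emph{every} receiver $i$.
Minimality of $\vect{\xi}_{\min}^{(p)}$ then replaces both the averaging argument and \cref{lem:max_ijp_existence}.

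Your route is shorter and self-contained. It also gives a concrete admissible rule that uses only $\nabla f_{(p)}$: take a largest-mass donor and its best receiver, with no need to compute $\mu^*$.
The paper's route instead certifies that the pair maximizing the $\vect{\xi}_{\min}^{(p)}$-surrogate over all admissible donors is valid. This ties the pair choice directly to the minimum-norm subgradient rather than to a particular heavy coordinate.
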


The above proposition shows that the simplex-selection step is well defined.
Its proof, deferred to \cref{app:block-pair-selection}, first selects a block
with a large minimum-norm subgradient and then a pair with a large feasible
directional derivative.

\subsection{Convergence Analysis}\label{subsec:conver_analy}

Based on \cref{thm:Schneider_frame} in \cref{sec:prelimi}, we establish the convergence of the iterate sequence $\{(\matr{U}_k,\matr{V}_k)\}$ generated by \cref{alg:gradient-based}. The constrained \L{}ojasiewicz inequality on $\Delta_n^m\times\Omega$ used in the analysis is established in \cref{lem:product-lojasiewicz} in \cref{app:general-convergence}.

Before establishing the convergence result, we introduce \cref{assump:interiority}, which will be applied to the relevant block variables when needed. This assumption is only required to handle the boundary issue arising in the verification of condition \eqref{cond:A3}. It does not affect the verification of \eqref{cond:A1} and \eqref{cond:A2}, and hence is not required for the global convergence result.

\begin{assumption}\label{assump:interiority}
There exists a constant $\varepsilon_{\mathrm{int}}>0$ such that, for all sufficiently
large $k$, the simplex iterate $\matr{U}_k\in\Delta_n^m$ satisfies
$(\matr{U}_k)_{i,p}\geq\varepsilon_{\mathrm{int}}$ for all $i,p$.
\end{assumption}

The next theorem combines the descent and step-length controls for the two
blocks. It reduces the analysis to conditions~\eqref{cond:A1}--\eqref{cond:A3}
and then applies the constrained \L{}ojasiewicz inequality.
\begin{theorem}[Convergence of BCD-SubG]\label{thm:Schneider_frame_two}
Let $f$ be as in \eqref{eq:cost_general}, where $\Omega$ is a closed feasible set such that, at every
$\matr{V}^*\in\Omega$, it admits a real-analytic parametrization
satisfying conditions~(i)--(ii) of
\cref{lem:nonsmooth-lojasiewicz}, and $f$ admits a
real-analytic extension to a neighborhood of $\Delta_n^m\times\Omega$.
Let $\{(\matr{U}_k,\matr{V}_k)\}$ be the sequence generated by 
\cref{alg:gradient-based}. 
Suppose that there exist constants $\eta_{\mathrm{U}},\eta_{\mathrm{V}}>0$ such that, for 
sufficiently large $k$, the following conditions hold.
\begin{itemize}
\item[(i)] When the inequality \eqref{eq:update_selection_inequality} holds, we have 
\begin{align*}\label{cond:V1}
f_{\matr{U}_{k-1}}(\matr{V}_{k-1})
-
f_{\matr{U}_{k-1}}(\matr{V}_{k})
&\geq
\eta_{\mathrm{V}}
g^{-}_{f_{\matr{U}_{k-1}}}(\matr{V}_{k-1})
\left\|
\matr{V}_{k}-\matr{V}_{k-1}
\right\|.\tag{\textbf{A1-V}}\\
\label{cond:V2}
g^{-}_{f_{\matr{U}_{k-1}}}(\matr{V}_{k-1})=0
\ &\Longrightarrow\ 
\matr{V}_{k}=\matr{V}_{k-1}.\tag{\textbf{A2-V}}
\end{align*}
\item[(ii)] Otherwise, we have 
\begin{align*}\label{cond:U1}
h_k(0)
-
h_k(\alpha_k^*)
&\geq
\eta_{\mathrm{U}}
\operatorname{dist}\!\left(
0,\partial h_k(0)
\right)
|\alpha_k^*|.\tag{\textbf{A1-U}}\\
\label{cond:U2}
\operatorname{dist}\!\left(
0,\partial h_k(0)
\right)=0
\ &\Longrightarrow\ 
\alpha_k^*=0.\tag{\textbf{A2-U}}
\end{align*}
\end{itemize}
If the sequence $\{(\matr{U}_k,\matr{V}_k)\}$ admits an accumulation
point $(\matr{U}^*,\matr{V}^*)\in\Delta_n^m\times\Omega$, then the
whole sequence converges to $(\matr{U}^*,\matr{V}^*)$.
Furthermore, suppose that \cref{assump:interiority} holds for the
$\matr{U}$-block, that $(\matr{U},\matr{V})\mapsto g^-_{f_{\matr{U}}}(\matr{V})$ is lower
semicontinuous, and that there exist constants $\kappa_{\mathrm{U}},\kappa_{\mathrm{V}}>0$ such that the following additional conditions are satisfied for large enough $k$: 

\begin{itemize}
\item[(iii)] when the inequality \eqref{eq:update_selection_inequality} holds, we have  
\begin{equation*}\label{cond:V3}
\left\|\matr{V}_{k}-\matr{V}_{k-1}\right\|
\geq
\kappa_{\mathrm{V}}
g^{-}_{f_{\matr{U}_{k-1}}}(\matr{V}_{k-1}).\tag{\textbf{A3-V}}
\end{equation*}
\item[(iv)] otherwise, we have
\begin{equation*}\label{cond:U3}
|\alpha_k^*|
\geq
\kappa_{\mathrm{U}}
\operatorname{dist}\!\left(
0,\partial h_k(0)
\right).\tag{\textbf{A3-U}}
\end{equation*}
\end{itemize}
Then $(\matr{U}^*,\matr{V}^*)$ is stationary, and convergence holds at the rate given in \eqref{eq:conver_rate} of 
\cref{thm:Schneider_frame}.
\end{theorem}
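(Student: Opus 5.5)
The plan is to apply \cref{thm:Schneider_frame} with $\set{M}=\Delta_n^m\times\Omega$ and $\vect{x}_k=(\matr{U}_k,\matr{V}_k)$. The accumulation point satisfies \eqref{eq:lojasiewicz} by \cref{lem:product-lojasiewicz}. So it suffices to verify \eqref{cond:A1}, \eqref{cond:A2} and, for the second part, \eqref{cond:A3} for the joint iterates.

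The basic link is a two-sided comparison between the joint stationarity measure and the blockwise quantities. Since $\set{M}$ is a product, its tangent cone is $\set{T}_{\matr{U}}\Delta_n^m\times\set{T}_{\matr{V}}\Omega$, and the distance of $-\nabla f$ to it splits across the blocks. Hence
\[
g^-_f(\matr{U},\matr{V})^2=\operatorname{dist}\bigl(\matr{0},\partial_{\Delta_n^m}f_{\matr{V}}(\matr{U})\bigr)^2+g^-_{f_{\matr{U}}}(\matr{V})^2 .
\]
By the selection rule \eqref{eq:update_selection_inequality}, the updated block always carries the larger of the two terms. Writing $G_{\mathrm{sel}}$ for the selected block's measure, this gives
\[
g^-_f(\vect{x}_{k-1})\le\sqrt2\,G_{\mathrm{sel}}.
\]

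In the $\matr{V}$-step, \eqref{cond:V1} gives \eqref{cond:A1} with $\eta=\eta_{\mathrm V}/\sqrt2$. For \eqref{cond:A2}, $g^-_f=0$ forces $g^-_{f_{\matr U}}(\matr V)=0$, and \eqref{cond:V2} then gives $\matr{V}_k=\matr{V}_{k-1}$.

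In the $\matr{U}$-step, only column $p_k$ changes, and its change is $\alpha_k^*(u_{k-1}^{(p_k)})_{j_k}(\vect{e}_{i_k}-\vect{e}_{j_k})$. Therefore
\[
\|\vect{x}_k-\vect{x}_{k-1}\|=\sqrt2\,|\alpha_k^*|\,(u_{k-1}^{(p_k)})_{j_k}\le\sqrt2\,|\alpha_k^*|.
\]
Combining \eqref{cond:U1} with \eqref{eq:grad_inequality_h} yields
\[
f(\vect{x}_{k-1})-f(\vect{x}_k)\ge\eta_{\mathrm U}\,\delta\,G_{\mathrm{sel}}\,|\alpha_k^*|\ge\tfrac{\eta_{\mathrm U}\delta}{2}\,g^-_f(\vect{x}_{k-1})\,\|\vect{x}_k-\vect{x}_{k-1}\|,
\]
which is \eqref{cond:A1}. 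For \eqref{cond:A2}, $g^-_f=0$ makes $G_{\mathrm{sel}}=0$. I need $\operatorname{dist}(0,\partial h_k(0))=0$ in that case, so that \eqref{cond:U2} applies. Since $h_k'(0)=(u^{(p_k)})_{j}\bigl(g_{i}-g_{j}\bigr)$, this is a directional derivative of $f_{(p)}$ along the feasible direction $\vect e_i-\vect e_j$, scaled by $u_j$. It is therefore bounded in magnitude by a multiple of the minimum-norm subgradient along feasible directions; the endpoint case $\alpha=-a/b$ with $a=0$ needs only a one-sided bound. So $G_{\mathrm{sel}}=0$ forces it to vanish.

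For the second part, \eqref{cond:A3} is where the extra hypotheses enter, and I expect it to be the main obstacle. In the $\matr V$-step, \eqref{cond:V3} gives
\[
\|\vect x_k-\vect x_{k-1}\|\ge\kappa_{\mathrm V}G_{\mathrm{sel}}\ge(\kappa_{\mathrm V}/\sqrt2)\,g^-_f .
\]
In the $\matr U$-step, \cref{assump:interiority} gives $(u^{(p_k)}_{k-1})_{j_k}\ge\varepsilon_{\mathrm{int}}$. Then \eqref{cond:U3} and \eqref{eq:grad_inequality_h} give
\[
\|\vect x_k-\vect x_{k-1}\|\ge\sqrt2\,\varepsilon_{\mathrm{int}}\kappa_{\mathrm U}\,\delta\,G_{\mathrm{sel}}\ge\varepsilon_{\mathrm{int}}\kappa_{\mathrm U}\,\delta\,g^-_f .
\]
The interiority bound is exactly what prevents the displacement from collapsing when $(u^{(p_k)})_{j_k}\to0$.

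\cref{thm:Schneider_frame} then yields $g^-_f(\vect x_k)\to0$ and the rate \eqref{eq:conver_rate}. To conclude that the limit is stationary I need lower semicontinuity of $g^-_f$ along the sequence, and I would take it blockwise. The $\matr V$-part is lower semicontinuous by hypothesis. For the $\matr U$-part, interiority means the limit $\matr U^*$ also has all entries $\ge\varepsilon_{\mathrm{int}}$. There the normal cone reduces to the span of the column indicators $\vect 1$, so $\operatorname{dist}(\matr 0,\partial_{\Delta_n^m}f_{\matr V}(\matr U))$ equals the norm of the projected gradient, which is continuous near $\matr U^*$. Passing to the limit gives $g^-_f(\matr U^*,\matr V^*)=0$.
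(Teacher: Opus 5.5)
Your proposal is correct and follows the paper's proof essentially step for step. You use the same product splitting of $g^-_f$, the factor $\sqrt2$ from the selection rule, the displacement $\sqrt2\,b_k|\alpha_k^*|$, interiority for \eqref{cond:A3}, and \cref{thm:Schneider_frame} combined with \cref{lem:product-lojasiewicz}. There are only two minor differences. First, your $\matr U$-step case of \eqref{cond:A2} is vacuous: when $g^-_f=0$ both block measures vanish, so \eqref{eq:update_selection_inequality} holds and the $\matr V$-update is taken, which is how the paper argues. Second, for stationarity of the $\matr U$-block you use continuity of the projected gradient in the relative interior, whereas the paper uses the closed graph of the normal-cone mapping, which does not need interiority.
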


The proof is given in \cref{app:general-convergence}.

\subsection{Convergence Analysis for Semi-NNICA}\label{subsec:conver_analy_ICA}

We now apply \cref{alg:gradient-based} to the direct-fit and
indirect-fit Semi-NNICA models in
\eqref{eq:cost_direct} and \eqref{eq:cost_indirect}, respectively,
and specialize \cref{thm:Schneider_frame_two} to the resulting
subalgorithms. The update estimates used below are established
in \cref{sec:semi-nnica-direct,sec:semi-nnica}. The proofs of both corollaries
are given in \cref{app:proof-semi-nnica-convergence}.

For the direct-fit model, the fixed quadratic penalty guarantees bounded
iterates and uniform strong convexity of the diagonal subproblems.
The general
convergence theorem and the model-specific update estimates therefore give
the following result.
\begin{corollary}[Convergence for the direct-fit model]\label{corollary:direct_fit}
Fix $\lambda>0$ in the direct-fit model \eqref{eq:cost_direct}.
For every initial point
$(\matr{A}_0,\matr{D}_0)\in\Delta_n^m\times(\set{D}_m)^L$,
the sequence $\{(\matr{A}_k,\matr{D}_k)\}$ generated by
\cref{alg:gradient-based-direct} is bounded, admits an accumulation point,
and converges to a limit $(\matr{A}^*,\matr{D}^*)$.
If, in addition, \cref{assump:interiority} holds for the $\matr{A}$-block,
then the limit is a constrained stationary point of $f_{\mathrm{dir}}$, and
the convergence rate in \eqref{eq:conver_rate} holds with the constrained
\L{}ojasiewicz exponent of $f_{\mathrm{dir}}$ at that limit.
\end{corollary}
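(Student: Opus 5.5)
The plan is to obtain the corollary from \cref{thm:Schneider_frame_two} with $\matr{U}=\matr{A}$, $\matr{V}=\matr{D}$ and $\Omega=(\set{D}_m)^L$. The structural hypotheses are easy. $\Omega$ is a linear subspace, so at every $\matr{D}^*$ the parametrization $\tau(\vect{z})=\sum_{\ell,r} z_{\ell r}\,\vect{e}_r\vect{e}_r^{\T}$ placed in the $\ell$-th slot is linear, hence analytic, and open. This gives (i)--(ii) of \cref{lem:nonsmooth-lojasiewicz}. The function $f_{\mathrm{dir}}$ is a polynomial and is therefore analytic on all of $\mathbb{R}^{n\times m}\times(\set{D}_m)^L$. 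On the subspace, $g^-_{f_{\matr{A}}}(\matr{D})$ is simply the norm of the diagonal part of the partial gradient. This quantity is continuous in $(\matr{A},\matr{D})$, so the required lower semicontinuity holds.

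\textbf{Boundedness and accumulation point.} Each step of \cref{alg:gradient-based-direct} is a descent step, which is guaranteed by (A1-U)/(A1-V) below. Hence $f_{\mathrm{dir}}(\matr{A}_k,\matr{D}_k)\le f_{\mathrm{dir}}(\matr{A}_0,\matr{D}_0)$. Because $\tilde f_{\mathrm{dir}}\ge0$, we get
\[
\lambda\sum_\ell\|\matr{D}_k^{(\ell)}\|^2\le f_{\mathrm{dir}}(\matr{A}_0,\matr{D}_0),
\]
and $\matr{A}_k$ lies in the compact set $\Delta_n^m$. The sequence is therefore bounded, and Bolzano--Weierstrass gives an accumulation point.

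\textbf{V-block conditions.} For fixed $\matr{A}$, the map $\matr{D}\mapsto f_{\mathrm{dir}}$ is a strongly convex quadratic on the subspace: its Hessian is at least $2\lambda\matr{I}$ and at most $(2\lambda+2\beta)\matr{I}$, with $\beta$ bounded uniformly because $\|\matr{A}\|$ is bounded on $\Delta_n^m$. I expect the $\matr{D}$-update to be the exact minimizer, which decouples over $\ell$ as a linear solve with matrix $(\matr{A}^{\T}\matr{A})^{\hadam2}+\lambda\matr{I}$. For a strongly convex quadratic with exact minimization, the gradient is $\matr{g}=\matr{H}(\matr{D}_{k-1}-\matr{D}_k)$ and the decrease equals $\tfrac12\langle \matr{g},\matr{H}^{-1}\matr{g}\rangle$. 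This gives the decrease bound $\ge \tfrac12\|\matr{g}\|\,\|\Delta\matr{D}\|\cdot\lambda_{\min}/\lambda_{\max}$, which is (A1-V) with $\eta_{\mathrm V}=\lambda/(2(\lambda+\beta))$. Zero gradient forces zero step, which is (A2-V). Finally $\|\Delta\matr{D}\|\ge\|\matr{g}\|/(2\lambda+2\beta)$, which is (A3-V).

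\textbf{U-block conditions.} These form the main obstacle. Each $h_k$ is a univariate polynomial of degree at most $4$ in $\alpha$, because $\matr{A}$ enters $\tilde f_{\mathrm{dir}}$ quartically. Its coefficients are uniformly bounded, since $\matr{A}\in\Delta_n^m$ and $\matr{D}_k$ is bounded. The feasible interval is $[-a/b,1]$ and $\partial h_k(0)$ is built from $h_k'(0)$. Under (R1) or (R2) with $\alpha^*$ the first local minimizer along the descent direction, I will use a standard one-dimensional lemma from \cref{sec:semi-nnica-direct}: if $|h''|\le M$ on the interval, then $|h'(\alpha)|\ge |h'(0)|-M|\alpha|$ and $h$ is monotone between $0$ and $\alpha^*$. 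Then
\[
h(0)-h(\alpha^*)\ge \int_0^{|\alpha^*|}\max\bigl(|h'(0)|-Ms,0\bigr)\,ds\ge c\,|h'(0)|\,|\alpha^*|
\]
with $c$ depending only on $M$ and the interval length. Here one uses that $\alpha^*$ is either interior, where $h'$ vanishes and hence $|\alpha^*|\ge|h'(0)|/M$, or at an endpoint. I would first try the split $|\alpha^*|\le|h'(0)|/(2M)$ versus the opposite case. When $\operatorname{dist}(0,\partial h_k(0))=0$ the rule sets $\alpha^*=0$, which is (A2-U). Whenever the chosen pair gives a nonzero distance, that distance equals $|h'(0)|$ in the descent direction. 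This establishes global convergence without assumptions.

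For (A3-U), \cref{assump:interiority} is used. If $\alpha^*$ is interior, $|\alpha^*|\ge|h'(0)|/M$. If $\alpha^*$ is an endpoint, then $|\alpha^*|\ge\min(1,a/b)\ge\varepsilon_{\mathrm{int}}$, and $|h'(0)|$ is bounded, so $|\alpha^*|\ge(\varepsilon_{\mathrm{int}}/G)|h'(0)|$. Invoking the second part of \cref{thm:Schneider_frame_two} then yields stationarity and the rate \eqref{eq:conver_rate}.
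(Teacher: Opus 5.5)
Your reduction to \cref{thm:Schneider_frame_two}, the boundedness argument, the $\matr{D}$-block estimates \eqref{cond:V1}--\eqref{cond:V3}, and your \eqref{cond:U3} argument are sound and follow the paper's route. Two small repairs are needed. First, read monotone decrease directly from exact $\matr{D}$-minimization and from Rule~(R2), not from \eqref{cond:U1}, since your curvature bound $M$ is uniform only once $\matr{D}_k$ is known to be bounded. Second, the interval length $1+a/b$ is uniformly bounded only after the reduction $a\le b$ of \cref{rem:pair-reversal}.

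The genuine gap is \eqref{cond:U1}, on which the unconditional convergence claim rests. The one-dimensional lemma you invoke is false: $|h''|\le M$ together with the first-local-minimizer property does not give $h(0)-h(\alpha^*)\geq c\,|h'(0)|\,|\alpha^*|$ with $c$ depending only on $M$ and the interval length. Write $g\eqdef|h'(0)|$. Every interior minimizer satisfies $|\alpha^*|\geq g/M$, because $h'(\alpha^*)=0$, so it falls in the second case of your split. There your integral yields only
\[
h(0)-h(\alpha^*)\geq\int_0^{g/M}(g-Ms)\,ds=\frac{g^2}{2M},
\]
which is not bounded below by a multiple of $g|\alpha^*|$ when $g\to0$ while $|\alpha^*|$ does not shrink proportionally. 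For a general $C^2$ function, $h'$ can rise from $-g$ to $-g^2$ and stay there until it changes sign near $\alpha=1$, which makes the ratio arbitrarily small. Even for the quartic $h(\alpha)=c_4\alpha^4-g\alpha$, one has $\alpha^*=(g/(4c_4))^{1/3}$ and true decrease $\tfrac34 g\alpha^*\sim g^{4/3}$, whereas your estimate gives only $O(g^2)$.

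The missing idea is to use the polynomial degree: $h_k'$ is a cubic, so it cannot hover near zero on a long stretch before its first sign change. The paper writes $h_k(\alpha)-h_k(0)=c_4\alpha^4+c_3\alpha^3+c_2\alpha^2+c_1\alpha$ with $c_4>0$ and $c_3=-2\chi c_4$, where $\chi=(b-a)/b\in[0,1]$. It uses $\alpha^*h_k'(\alpha^*)\leq0$ to get $h_k(0)-h_k(\alpha^*)\geq\tfrac12|c_1||\alpha^*|+c_4(\alpha^*)^3(\alpha^*-\chi)$. For $\alpha^*\in(0,\chi)$, it then locates the roots of $h_k'$ via Vieta's relations. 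This yields the universal constant $\eta_{\mathrm{A},1}=1/6$, independent of $k$ and of any bound on $\matr{D}_k$. Without an argument of this kind, your proof of the first part of the corollary is incomplete.
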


For the indirect-fit model, compactness of the simplex guarantees the existence of an accumulation point. The general convergence theorem and the model-specific update estimates therefore give the following result.
\begin{corollary}[Convergence for the indirect-fit model]
\label{corollary:indirect_fit}
Consider the indirect-fit model \eqref{eq:cost_indirect} under the square
setting $n=m$, where each $\matr{C}^{(\ell)}$ is nonsingular.
For every initial point $\matr{A}_0\in\Delta_n^m$, the sequence
$\{\matr{A}_k\}$ generated by \cref{alg:gradient-based-indirect}
converges to a limit $\matr{A}^*$. If, in addition,
\cref{assump:interiority} holds, then $\matr{A}^*$ is a constrained
stationary point of $f_{\mathrm{indir}}$, and the convergence rate in
\eqref{eq:conver_rate} holds with the constrained \L{}ojasiewicz exponent
of $f_{\mathrm{indir}}$ at that limit.
\end{corollary}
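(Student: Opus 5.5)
The plan is to apply \cref{thm:Schneider_frame_two} with $\Omega=\{\matr{0}_{1\times1}\}$. In this setting the $\matr{V}$-block plays no role. Since $\Omega$ is a single point, $g^-_{f_{\matr{U}}}(\matr{V})=0$ identically, because the tangent cone of a point is $\{0\}$. Hence inequality \eqref{eq:update_selection_inequality} holds exactly when $\operatorname{dist}(\matr{0},\partial_{\Delta_n^m}f_{\mathrm{indir}}(\matr{A}_{k-1}))=0$. In that case the algorithm keeps $\matr{A}_k=\matr{A}_{k-1}$, and \eqref{cond:V1}, \eqref{cond:V2}, \eqref{cond:V3} hold trivially. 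The constant parametrization of the point $\Omega$ satisfies conditions (i)--(ii) of \cref{lem:nonsmooth-lojasiewicz}, and the map $(\matr{U},\matr{V})\mapsto g^-_{f_{\matr{U}}}(\matr{V})\equiv0$ is trivially lower semicontinuous.

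For analyticity, note that $f_{\mathrm{indir}}$ is a polynomial (of degree four) in the entries of $\matr{A}$, because the matrices $(\matr{C}^{(\ell)})^{-1}$ are fixed constants. It therefore extends analytically to all of $\mathbb{R}^{n\times n}$. Because $\Delta_n^m$ is compact, the iterates $\{\matr{A}_k\}$ always admit an accumulation point. No penalty or coercivity argument is needed here, in contrast with the direct-fit case.

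The substantive step is checking \eqref{cond:U1}, \eqref{cond:U2}, and under \cref{assump:interiority} also \eqref{cond:U3}, for the updates of \cref{alg:gradient-based-indirect}. The elementary function $h_k(\alpha)$ is the composition of $f_{\mathrm{indir}}$ with an affine map in $\alpha$, so it is a polynomial of degree at most four on the feasible interval $[-a/b,1]$. Its coefficients are bounded uniformly in $k$, since $\matr{A}$ ranges over a compact set. I will take the explicit update estimates from \cref{sec:semi-nnica} as given. They should show that for a quartic $h$ with bounded coefficients, the exact or first local minimizer under Rules \hyperlink{rule:R1}{(R1)}/\hyperlink{rule:R2}{(R2)} satisfies $h(0)-h(\alpha^*)\ge \eta_{\mathrm U}\,\operatorname{dist}(0,\partial h(0))\,|\alpha^*|$.

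The argument for that inequality runs as follows. Along the descent direction, $h$ decreases from $0$ to the first local minimizer, and its derivative is at least comparable to $|h'(0)|$ on a fraction of the path. This comparison uses Lipschitz bounds on $h''$ and a case split on whether $\alpha^*$ is an interior stationary point or an endpoint of the interval. \eqref{cond:U2} is immediate from the rules, which set $\alpha^*=0$ whenever $\operatorname{dist}(0,\partial h(0))=0$.

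For \eqref{cond:U3}, \cref{assump:interiority} keeps $0$ a distance of order $\varepsilon_{\mathrm{int}}$ from both endpoints $-a/b$ and $1$, since $a,b\ge\varepsilon_{\mathrm{int}}$. As a result $\partial h(0)=\{h'(0)\}$. If $\alpha^*$ is interior, then $h'(\alpha^*)=0$ and the bound $|h''|\le M$ gives $|\alpha^*|\ge|h'(0)|/M$. If $\alpha^*$ is an endpoint, then $|\alpha^*|\ge\varepsilon_{\mathrm{int}}$, which is at least a constant times $|h'(0)|\le G$. Either way we get a uniform $\kappa_{\mathrm U}$.

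With all conditions verified, \cref{thm:Schneider_frame_two} gives convergence of the whole sequence. Under interiority it also gives stationarity of the limit and the rate \eqref{eq:conver_rate}. The main obstacle is the uniform constant $\eta_{\mathrm U}$ in \eqref{cond:U1} when $\alpha^*$ sits at a boundary endpoint without interiority. There I would first try a quartic-polynomial descent-lemma argument: the first local minimizer lies beyond any point where $h'$ first halves, which yields decrease $\gtrsim|h'(0)|\,|\alpha^*|$.
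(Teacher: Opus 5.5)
Your skeleton matches the paper's proof. You use the singleton $\Omega=\{\matr{0}_{1\times1}\}$ with a constant parametrization, note that $f_{\mathrm{indir}}$ is polynomial, get an accumulation point from compactness, and observe that the $\matr{V}$-conditions and lower semicontinuity are trivial. If you then simply take \eqref{cond:U1}--\eqref{cond:U3} from \cref{lem:A-update-indirect}, as the paper does, you are done. Your \eqref{cond:U3} argument via a uniform bound on $|h_k''|$ is also a valid generic substitute for part~(iii).

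The gap is in your own justification of \eqref{cond:U1}, which you call the substantive step. \Cref{alg:gradient-based-indirect} uses \hyperlink{rule:R1}{Rule~(R1)}, which returns a \emph{global} minimizer on $[-a/b,1]$. For a general polynomial of degree at most four with bounded coefficients, this does not satisfy \eqref{cond:U1} uniformly, because the global minimizer may lie on the ascent side of $0$. For example, take $h(\alpha)=-\epsilon\alpha+\alpha^2+c\alpha^3$ on $[-1/2,1]$ with $c=2+4\epsilon+2.4\epsilon^2$. It has $h'(0)=-\epsilon$ and global minimizer $\alpha^*=-1/2$, with $h(0)-h(\alpha^*)=0.3\epsilon^2$, while $|h'(0)|\,|\alpha^*|=\epsilon/2$. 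The ratio tends to $0$ as $\epsilon\to0$.

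Your ``follow the descent direction to the first local minimizer'' sketch is really an argument for \hyperlink{rule:R2}{Rule~(R2)}. Even there, Lipschitz bounds on $h''$ only give a decrease of order $|h'(0)|^2/M$, not $|h'(0)|\,|\alpha^*|$, because the point where $h'$ halves need not be comparable to $|\alpha^*|$. What would work for (R2) is a degree-dependent equivalence of the $L^1$ and sup norms, applied to $h'$ on $[0,\alpha^*]$, where $h'$ has constant sign.

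The missing observation is that $h_k$ is in fact quadratic. Write $\matr{A}(\alpha)=\matr{A}_{k-1}+\alpha\vect{w}\vect{e}_p^{\T}$ with $\vect{w}=b(\vect{e}_i-\vect{e}_j)$. The $\alpha^2$-term of $\matr{A}(\alpha)^{\T}(\matr{C}^{(\ell)})^{-1}\matr{A}(\alpha)$ is $\alpha^2\bigl(\vect{w}^{\T}(\matr{C}^{(\ell)})^{-1}\vect{w}\bigr)\vect{e}_p\vect{e}_p^{\T}$. This term is diagonal, so $\operatorname{offdiag}$ removes it. Hence the off-diagonal part is affine in $\alpha$, and $h_k(\alpha)-h_k(0)=c_2\alpha^2+c_1\alpha$ with $c_2\geq0$; moreover $c_2>0$ whenever $c_1\neq0$.

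Rule~(R1) then returns the projection of $-c_1/(2c_2)$ onto $[-a/b,1]$. This point lies on the descent side and satisfies $|\alpha_k^*|\leq|c_1|/(2c_2)$. Therefore $h_k(0)-h_k(\alpha_k^*)=|c_1||\alpha_k^*|-c_2(\alpha_k^*)^2\geq\tfrac12|c_1||\alpha_k^*|$. The boundary case you flag as the main obstacle disappears, and no quartic analysis is needed.
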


\section{Solving the direct-fit model}\label{sec:semi-nnica-direct}

For the direct-fit model \eqref{eq:cost_direct} with fixed $\lambda>0$, we apply the general
framework \eqref{eq:cost_general} with $f=f_{\mathrm{dir}}$, $\matr{U}=\matr{A}\in\Delta_n^m$,
and
$\matr{V}=\matr{D}=(\matr{D}^{(1)},\ldots,\matr{D}^{(L)})
\in\Omega=(\set{D}_m)^L$. 

We omit the range $\ell=1,\ldots,L$
from collections of diagonal factors when it is clear.
Throughout this section and the direct-fit update proofs, we use the restricted functions 
$f_{\matr{D}}:\matr{A}\mapsto f_{\mathrm{dir}}(\matr{A},\matr{D})$ and
$f_{\matr{A}}:\matr{D}\mapsto f_{\mathrm{dir}}(\matr{A},\matr{D})$.
The space $\Omega$, equipped with the product Frobenius inner product,
is a Euclidean space identified with $\mathbb{R}^{mL}$ through the
diagonal entries. Gradients with respect to $\matr{D}$ are taken in
this space. If $\nabla^{\mathrm{amb}}$ denotes differentiation with
respect to unrestricted matrix entries, then we have
$\nabla f_{\matr{A}}\!\left(\matr{D}\right)
=\left(
\operatorname{Diag}\!\left(\operatorname{diag}\!\left(
\nabla^{\mathrm{amb}}_{\matr{D}^{(\ell)}}f_{\mathrm{dir}}(\matr{A},\matr{D})
\right)\right)
\right)_{\ell=1}^{L}.$
In particular, we have
$\nabla f_{\matr{A}}\!\left(\matr{D}\right)
=\nabla_{\matr{D}}\tilde{f}_{\mathrm{dir}}(\matr{A},\matr{D})
+2\lambda\matr{D}$.
The block-selection rule below uses this regularized gradient. With $\matr{D}_{k-1}$ fixed, for a candidate column $p_k$
and pair $(i_k,j_k)$ in the $\matr{A}$-update,
define the corresponding elementary function by
\begin{equation}\label{eq:hk_direct}
h_k(\alpha)
\eqdef
f_{\mathrm{dir}}\!\left(
\matr{A}_{k-1}
+\alpha(\matr{A}_{k-1})_{j_k,p_k}
(\vect{e}_{i_k}-\vect{e}_{j_k})
\vect{e}_{p_k}^{\T},
\matr{D}_{k-1}
\right).
\end{equation}
The function $h_k(\alpha)$ has a quartic polynomial form,
as shown in \eqref{eq:h_quartic}, with coefficients
specified immediately thereafter.
The resulting algorithm is given in
\cref{alg:gradient-based-direct}.
% Since the penalty is independent of $\matr{A}$, the $\matr{A}$-gradient,
% the pair-selection criterion, and the ECT objective differences are unchanged.

\begin{subalgorithm}[H]
\caption{BCD-SubG for the direct-fit model.}
\label{alg:gradient-based-direct}
\KwIn{Problem \eqref{eq:cost_direct} with fixed $\lambda>0$, a parameter
$0<\delta\leq1/[n\sqrt{m(n-1)}]$, and a starting point
$\matr{A}_0\in\Delta_n^m$ and
$\matr{D}_0=(\matr{D}^{(\ell)}_0)_{\ell=1}^{L}\in(\set{D}_m)^L$.}
\KwOut{Sequence of iterates $\{(\matr{A}_k,\matr{D}_k)\}$.}
\begin{itemize}
\item \textbf{For} $k = 1, 2, \dots$ [until a stopping criterion is satisfied] do
\item If
\[
\left\|\nabla f_{\matr{A}_{k-1}}\!\left(\matr{D}_{k-1}\right)\right\|
\;\geq\;
\operatorname{dist}\!\left(\matr{0},\,
\partial_{\Delta_n^m}f_{\matr{D}_{k-1}}(\matr{A}_{k-1})\right),
\]
we set $\matr{A}_{k}=\matr{A}_{k-1}$ and update $\matr{D}_k$ over
$(\set{D}_m)^L$.
\begin{itemize} 
\item[$\bullet$] Update $\matr{D}_k$ by solving
\[
\matr{D}_k
\in
\argmin_{\matr{D}\in(\mathcal{D}_m)^L}
f_{\matr{A}_{k-1}}(\matr{D}).
\]
\end{itemize}
\item Otherwise, we set $\matr{D}_{k}=\matr{D}_{k-1}$ and perform an
update over $\Delta_n^m$.
\begin{itemize}
\item[$\bullet$] Choose $p_k\in\{1,\ldots,m\}$ and an index pair
$(i_k,j_k)$ with $i_k\neq j_k$ and
$(\matr{A}_{k-1})_{j_k,p_k}>0$ such that
\begin{align*}
\operatorname{dist}\!\left(0,\,\partial h_{k}(0)\right)\geq \delta
\operatorname{dist}\!\left(\matr{0},\,
\partial_{\Delta_n^m}f_{\matr{D}_{k-1}}(\matr{A}_{k-1})\right),
\end{align*}
where $h_k$ is defined in \eqref{eq:hk_direct}.
\item[$\bullet$] Compute $\alpha_k^*$ by applying
\hyperlink{rule:R2}{Rule~(R2)} to $h_k$.
\item[$\bullet$] Set $(\matr{A}_k)_{:,q}=(\matr{A}_{k-1})_{:,q}$ for all $
q\neq p_k$ and update $(\matr{A}_k)_{:,p_k}
=
\matr{T}_{i_k,j_k}(\alpha_k^*)
(\matr{A}_{k-1})_{:,p_k}.$
\end{itemize}
\item \textbf{End for}
\end{itemize}
\end{subalgorithm}

%\subsection{The $\matr{A}$-update.}

With $\matr{A}_{k-1}$ fixed, the $\matr{D}$-update solves $\matr{D}_k\in
\operatorname*{arg\,min}_{\matr{D}\in(\set{D}_m)^L}
f_{\matr{A}_{k-1}}(\matr{D})$.
This subproblem is strongly convex and has a unique minimizer  
\begin{equation}\label{eq:D-update-1}
\matr{D}_k^{(\ell)}
=
\operatorname{Diag}\left(
\bigl(
\matr{B}_{k-1}^{\T}\matr{B}_{k-1}
+\lambda\matr{I}_m
\bigr)^{-1}
\matr{B}_{k-1}^{\T}
\vecl\bigl(\matr{C}^{(\ell)}\bigr)
\right),
\quad \ell=1,\ldots,L,
\end{equation}
where $\matr{B}_{k-1}\eqdef\matr{A}_{k-1}\kr\matr{A}_{k-1}$. The inverse in \eqref{eq:D-update-1} is well defined since
$\matr{B}_{k-1}^{\T}\matr{B}_{k-1}+\lambda\matr{I}_m
\succeq\lambda\matr{I}_m\succ 0$. 
The $\matr{A}$-update applies \hyperlink{rule:R2}{Rule~(R2)} to $h_k$,
selecting the first local minimizer encountered from $\alpha=0$ along
the descent direction.

We next establish the model-specific properties required for
\cref{corollary:direct_fit}. We first show that the sequence generated by
\cref{alg:gradient-based-direct} is contained in a compact subset of
the feasible set. Since $\Delta_n^m$ is compact, it suffices to prove
that the tuple iterates $\matr{D}_k$ are uniformly bounded. We then
establish the descent, zero-step, and step-length estimates for the
$\matr{A}$- and $\matr{D}$-updates in
\cref{lem:A-update-direct,lem:D-update-direct}, respectively.

\begin{proposition}[Compactness of the direct-fit iterates]\label{prop:boundedness-direct}
For fixed $\lambda>0$ and every feasible initial point, the sequence
$\{(\matr{A}_k,\matr{D}_k)\}$ generated by
\cref{alg:gradient-based-direct} remains in a compact sublevel set of
$f_{\mathrm{dir}}$. In particular, the diagonal iterates are uniformly bounded,
and the full sequence admits an accumulation point.
\end{proposition}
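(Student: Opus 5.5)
The plan is to show that $f_{\mathrm{dir}}$ is monotonically nonincreasing along the iterates and that its sublevel sets in $\Delta_n^m\times(\set{D}_m)^L$ are compact.

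\emph{Monotonicity.} Each iteration changes exactly one block.

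In a $\matr{D}$-step, $\matr{D}_k$ is the global minimizer of $f_{\matr{A}_{k-1}}$ over $(\set{D}_m)^L$. Hence $f_{\mathrm{dir}}(\matr{A}_k,\matr{D}_k)\leq f_{\mathrm{dir}}(\matr{A}_{k-1},\matr{D}_{k-1})$.

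In an $\matr{A}$-step, Rule~(R2) either sets $\alpha_k^*=0$, or it moves from $\alpha=0$ along the descent direction of $h_k$ to the first local minimizer. In the second case $h_k$ is nonincreasing on the segment between $0$ and $\alpha_k^*$, so $h_k(\alpha_k^*)\leq h_k(0)$. Since $h_k$ is a quartic polynomial on a compact interval, this first local minimizer exists: either $h_k$ has a critical point in the interval, or the endpoint is reached.

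By induction, $f_{\mathrm{dir}}(\matr{A}_k,\matr{D}_k)\leq f_0\eqdef f_{\mathrm{dir}}(\matr{A}_0,\matr{D}_0)$ for all $k$.

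\emph{Coercivity in $\matr{D}$.} Because $\tilde f_{\mathrm{dir}}\geq0$, we have
\[
\lambda\sum_{\ell}\|\matr{D}^{(\ell)}\|^2\leq f_{\mathrm{dir}}(\matr{A},\matr{D}).
\]
So every point of the sublevel set $\set{S}\eqdef\{(\matr{A},\matr{D})\in\Delta_n^m\times(\set{D}_m)^L: f_{\mathrm{dir}}(\matr{A},\matr{D})\leq f_0\}$ satisfies $\|\matr{D}\|\leq\sqrt{f_0/\lambda}$.

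The set $\set{S}$ is closed, since $f_{\mathrm{dir}}$ is a continuous polynomial and both $\Delta_n^m$ and $(\set{D}_m)^L$ are closed. It is bounded, since $\Delta_n^m$ is bounded and the $\matr{D}$-part is bounded by the estimate above. Therefore $\set{S}$ is compact, the iterates lie in $\set{S}$, and the Bolzano--Weierstrass theorem yields an accumulation point in $\Delta_n^m\times(\set{D}_m)^L$.

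As a sanity check, the closed form \eqref{eq:D-update-1} gives the direct bound
\[
\|\matr{D}_k^{(\ell)}\|\leq\lambda^{-1}\|\matr{B}_{k-1}\|\,\|\matr{C}^{(\ell)}\|,
\]
where $\|\matr{B}_{k-1}\|$ is bounded because the columns of $\matr{A}$ lie in the simplex. This bound is not needed for the argument.

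\emph{Main obstacle.} The only delicate point is the $\matr{A}$-step descent under Rule~(R2), namely that the first local minimizer does not increase $h_k$. Two cases need care.

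If $\alpha_k^*$ is the endpoint of the feasible interval, monotone decrease up to the first local minimizer still gives $h_k(\alpha_k^*)\leq h_k(0)$.

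If $h_k'(0)=0$ but $\operatorname{dist}(0,\partial h_k(0))>0$ because of the boundary, then $0$ is itself an endpoint of the feasible interval. The descent direction is then the one pointing into the interval, and the same monotonicity argument applies.

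Beyond this, everything follows from the penalty term.
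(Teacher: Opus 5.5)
Your proposal is correct and follows essentially the same route as the paper. Both arguments rest on monotone nonincrease of $f_{\mathrm{dir}}$ under the exact $\matr{D}$-step and Rule~(R2), the penalty bound $\|\matr{D}_k\|\leq\sqrt{f_0/\lambda}$, and compactness of the closed, bounded sublevel set.

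One small correction to your ``main obstacle'' discussion: the case $h_k'(0)=0$ with $\operatorname{dist}(0,\partial h_k(0))>0$ cannot occur, because the normal cone always contains $0$. The boundary case that actually arises is $(\matr{A}_{k-1})_{i_k,p_k}=0$ with $h_k'(0)>0$. There the distance is zero and (R2) simply returns $\alpha_k^*=0$. Whenever the distance is positive, the descent direction points into the feasible interval.
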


\begin{proof}
Let $f_0\eqdef f_{\mathrm{dir}}(\matr{A}_0,\matr{D}_0)$.
An exact $\matr{D}$-update minimizes the regularized objective with
$\matr{A}$ fixed. An $\matr{A}$-update follows \hyperlink{rule:R2}{Rule~(R2)}, which does not
increase the objective; the penalty is constant during this update.
Thus, for every $k\geq0$, we have
\[
\lambda\sum_{\ell=1}^{L}\|\matr{D}_k^{(\ell)}\|^2
\leq f_{\mathrm{dir}}(\matr{A}_k,\matr{D}_k)
\leq f_{\mathrm{dir}}(\matr{A}_0,\matr{D}_0)=f_0.
\]
Hence, we obtain $\|\matr{D}_k\|\leq\sqrt{f_0/\lambda}$ for every $k$.
The sublevel set
$\{(\matr{A},\matr{D})\in\Delta_n^m\times(\set{D}_m)^L:
f_{\mathrm{dir}}(\matr{A},\matr{D})\leq f_0\}$
is closed by continuity and bounded by the same estimate and compactness
of $\Delta_n^m$. It is therefore compact and contains the full sequence,
which consequently admits an accumulation point.
\end{proof}

\begin{lemma}[The $\matr{A}$-update]\label{lem:A-update-direct}
Whenever the $\matr{A}$-update is performed in
\cref{alg:gradient-based-direct}, it satisfies the following properties, with $\eta_{\mathrm{A},1}=1/6$:
\begin{itemize}
\item[(i)] For all $k$, we have 
\[
h_k(0)-h_k(\alpha_k^*)
\geq\eta_{\mathrm{A},1}\operatorname{dist}\!\left(
0,\partial h_k(0)\right)|\alpha_k^*|.
\]
\item[(ii)] If $\operatorname{dist}\!\left(
0,\partial h_k(0)
\right)=0$, then $\alpha_k^*=0$.
\item[(iii)] If \cref{assump:interiority}
holds for the $\matr{A}$-block, then there exists a constant $K_{\mathrm{A},1}>0$, independent of $k$, such that, with
$\kappa_{\mathrm{A},1}\eqdef1/K_{\mathrm{A},1}$, for
all sufficiently large $k$, we have
\[
|\alpha_k^*|
\geq \kappa_{\mathrm{A},1}\,\operatorname{dist}\!\left(
0,\partial h_k(0)
\right).
\]
\end{itemize}
\end{lemma}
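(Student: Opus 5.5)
The plan is to exploit the fact that $h_k$ in \eqref{eq:hk_direct} is a polynomial of degree at most four in $\alpha$ on the feasible interval $I_k=[-a/b,1]$, where $a=(\matr{A}_{k-1})_{i_k,p_k}$ and $b=(\matr{A}_{k-1})_{j_k,p_k}>0$. I would start with the constrained subdifferential at $\alpha=0$.
\begin{itemize}
\item If $a>0$, then $0$ lies in the interior of $I_k$, and $\operatorname{dist}(0,\partial h_k(0))=|h_k'(0)|$.
\item If $a=0$, then $0$ is the left endpoint, so $\partial h_k(0)=(-\infty,h_k'(0)]$. Hence the distance is $|h_k'(0)|$ when $h_k'(0)<0$ and $0$ otherwise.
\end{itemize}
In both cases, $\operatorname{dist}(0,\partial h_k(0))>0$ exactly when the descent direction $-\operatorname{sign}(h_k'(0))$ is feasible, and then it equals $|h_k'(0)|$. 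Item (ii) then follows directly from the definition of \hyperlink{rule:R2}{Rule~(R2)}.

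For (i), I would reduce by symmetry to the case $g\eqdef -h_k'(0)>0$ and $\alpha_k^*>0$; the case $\alpha<0$ follows by the substitution $\alpha\mapsto-\alpha$. Since $\alpha_k^*$ is the first local minimizer encountered from $0$, the function $h_k$ is nonincreasing on $[0,\alpha_k^*]$. Hence $p(s)\eqdef -h_k'(s\alpha_k^*)$ is a polynomial of degree at most three with $p\geq0$ on $[0,1]$ and $p(0)=g$. We have
\[
h_k(0)-h_k(\alpha_k^*)=\alpha_k^*\int_0^1 p(s)\,ds .
\]
The key tool is three-point Gauss--Lobatto (Simpson) quadrature, which is exact for cubics:
\[
\int_0^1 p=\tfrac16 p(0)+\tfrac23 p(\tfrac12)+\tfrac16 p(1).
\]
Nonnegativity of $p$ then gives $\int_0^1 p\geq g/6$, which yields (i) with $\eta_{\mathrm{A},1}=1/6$.

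For (iii), I would use compactness. By \cref{prop:boundedness-direct}, the iterates lie in a compact set, so there are uniform bounds $G$ on $\|\nabla f_{\mathrm{dir}}\|$ and $H$ on the Hessian over a neighborhood of that set containing all ECT segments. Since $h_k(\alpha)$ evaluates $f_{\mathrm{dir}}$ along the direction $b(\vect{e}_{i_k}-\vect{e}_{j_k})\vect{e}_{p_k}^{\T}$ with $b\leq1$, we get $|h_k'|\leq \sqrt2 G$ and $|h_k''|\leq 2H$ on $I_k$. When the distance is positive, $\alpha_k^*\neq0$, and there are two cases.
\begin{itemize}
\item If $\alpha_k^*$ is an interior critical point, then $h_k'(\alpha_k^*)=0$. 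Hence $\operatorname{dist}(0,\partial h_k(0))=|h_k'(0)-h_k'(\alpha_k^*)|\leq 2H|\alpha_k^*|$.
\item If $\alpha_k^*$ is an endpoint of $I_k$, then \cref{assump:interiority} gives $|\alpha_k^*|\geq\min(1,a/b)\geq\varepsilon_{\mathrm{int}}$. Together with $\operatorname{dist}(0,\partial h_k(0))\leq\sqrt2 G$, this gives $|\alpha_k^*|\geq(\varepsilon_{\mathrm{int}}/(\sqrt2G))\operatorname{dist}(0,\partial h_k(0))$.
\end{itemize}
Taking $K_{\mathrm{A},1}=\max(2H,\sqrt2G/\varepsilon_{\mathrm{int}})$ covers both cases.

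The main obstacle I expect is obtaining the clean constant $1/6$ in (i). A naive argument via Lipschitz bounds on $h_k'$ would give a $k$-dependent constant, and it is the cubic-exact quadrature, combined with the monotonicity furnished by Rule~(R2), that gives the uniform constant. A secondary point to check is that the first local minimizer may coincide with an endpoint of $I_k$ without being a critical point; the case split in (iii) is designed to handle this.
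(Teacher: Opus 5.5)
Your proof is correct, and for part~(i) it takes a genuinely different route from the paper. The paper works with the explicit quartic coefficients. It first uses the pair-reversal convention to assume $a\le b$, then notes $c_4>0$ and $c_3=-2\chi c_4$ with $\chi=(b-a)/b\in[0,1]$. Combining $\alpha_k^*h_k'(\alpha_k^*)\le 0$ with the expansion gives $h_k(0)-h_k(\alpha_k^*)\ge \tfrac12|c_1||\alpha_k^*|+c_4(\alpha_k^*)^3(\alpha_k^*-\chi)$. The correction term for $\alpha_k^*\in(0,\chi)$ is then controlled by a Vieta-type case analysis of the roots of the cubic $h_k'$: three distinct real roots give $1/4$, and a complex or repeated pair gives $1/6$.

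Your Simpson-rule argument replaces all of this with a single observation. It uses only $\deg h_k'\le 3$ and the sign condition $p\ge 0$ on $[0,1]$ furnished by Rule~(R2). That sign condition is the one step needing care, and your justification is sound: a first sign change of $h_k'$ from negative to positive would itself be an earlier local minimizer.

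Your route needs neither the coefficient formulas nor the normalization $a\le b$. It also explains where $1/6$ comes from: it is the endpoint weight of the three-point Lobatto rule. Equality holds exactly when $p(1/2)=p(1)=0$, i.e.\ when $h_k'$ has a double root at $\alpha_k^*/2$ and a simple root at $\alpha_k^*$. This is precisely the paper's extremal configuration $\alpha_k^*=3\chi/4$ with repeated root $r=3\chi/8$. Your argument extends verbatim to higher-degree elementary functions via Lobatto rules with more nodes, whereas the paper's case analysis is tied to the quartic. In exchange, the paper's route gives the better factor $1/2$ whenever $\alpha_k^*\notin(0,\chi)$.

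For part~(iii) the two arguments are essentially the same. The paper's quotient $|c_1|/|4c_4(\alpha_k^*)^2+3c_3\alpha_k^*+2c_2|$ is $|h_k'(0)|$ divided by the divided difference $(h_k'(\alpha_k^*)-h_k'(0))/\alpha_k^*$. The paper bounds this through the $b^2$-scaled coefficients, using $|\alpha_k^*|\le 1$ from $a\le b$. You bound it by $\sup_{I_k}|h_k''|\le 2H$ via compactness, which holds on the whole feasible interval with no normalization. Your separate endpoint case is valid but unnecessary: under the interiority assumption, an endpoint step would make an entry of $\matr{A}_k$ vanish, and this is how the paper excludes it.
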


The proof is given in \cref{app:proof-direct-a}.

%\subsection{The $\matr{D}$-update.}

\begin{lemma}[The $\matr{D}$-update]\label{lem:D-update-direct}
Fix $\lambda>0$ and let $K_{\mathrm{D}}=2(m+\lambda)$,
$\eta_{\mathrm{D}}\eqdef\lambda/K_{\mathrm{D}}$, and
$\kappa_{\mathrm{D}}\eqdef1/K_{\mathrm{D}}$.
At every iteration $k$ at which the $\matr{D}$-update is performed,
the following properties hold:
\begin{itemize} 
\item[(i)] The following sufficient-descent estimate holds: 
\[
\begin{aligned}
f_{\matr{A}_{k-1}}(\matr{D}_{k-1})
-f_{\matr{A}_{k-1}}(\matr{D}_{k})
\geq\eta_{\mathrm{D}}
\left\|\nabla f_{\matr{A}_{k-1}}\!\left(\matr{D}_{k-1}\right)\right\|
\|\matr{D}_{k}-\matr{D}_{k-1}\|.
\end{aligned}
\]
\item[(ii)] If
$\left\|\nabla f_{\matr{A}_{k-1}}\!\left(\matr{D}_{k-1}\right)\right\|=0$,
then $\matr{D}_{k}=\matr{D}_{k-1}$.
\item[(iii)] The step length satisfies the following lower bound:
\[
\|\matr{D}_{k}-\matr{D}_{k-1}\|
\geq\kappa_{\mathrm{D}}
\left\|\nabla f_{\matr{A}_{k-1}}\!\left(\matr{D}_{k-1}\right)\right\|.
\]
\end{itemize}
\end{lemma}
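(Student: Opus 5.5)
The plan is to treat the $\matr{D}$-update as exact minimization of a strongly convex quadratic with Lipschitz gradient. Then all three properties follow from the two-sided quadratic bounds around the minimizer.

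Fix $\matr{A}=\matr{A}_{k-1}$ and write $\phi(\matr{D})\eqdef f_{\matr{A}}(\matr{D})$ on the Euclidean space $(\set{D}_m)^L\cong\mathbb{R}^{mL}$. Since $\matr{A}\matr{D}^{(\ell)}\matr{A}^{\T}$ depends only on $\operatorname{diag}(\matr{D}^{(\ell)})$ through $\vecl(\matr{A}\matr{D}^{(\ell)}\matr{A}^{\T})=\matr{B}\,\operatorname{diag}(\matr{D}^{(\ell)})$ with $\matr{B}=\matr{A}\kr\matr{A}$, the function $\phi$ is a separable sum over $\ell$ of quadratics. Its Hessian is block diagonal with blocks $2(\matr{B}^{\T}\matr{B}+\lambda\matr{I}_m)$. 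Hence $\phi$ is $\mu$-strongly convex with $\mu=2\lambda$ and has $L_\phi$-Lipschitz gradient with $L_\phi=2(\|\matr{B}^{\T}\matr{B}\|_2+\lambda)$.

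The key model-specific bound is $\|\matr{B}^{\T}\matr{B}\|_2\le m$. Entrywise, $(\matr{B}^{\T}\matr{B})_{pq}=(\vect{a}_p^{\T}\vect{a}_q)^2$. For columns in $\Delta_n$ we have $0\le\vect{a}_p^{\T}\vect{a}_q\le\|\vect{a}_p\|_\infty\|\vect{a}_q\|_1\le1$. So every entry lies in $[0,1]$, and the spectral norm is at most the maximal row sum, which is at most $m$. This gives $L_\phi\le 2(m+\lambda)=K_{\mathrm{D}}$.

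Let $\matr{D}_k$ be the unique minimizer \eqref{eq:D-update-1}, and write $\matr{G}=\nabla\phi(\matr{D}_{k-1})$ and $\matr{\Delta}=\matr{D}_k-\matr{D}_{k-1}$. Since $\phi$ is quadratic with Hessian $\matr{H}$, we have $\matr{G}=-\matr{H}\matr{\Delta}$ exactly. This gives (iii) directly: $\|\matr{G}\|\le L_\phi\|\matr{\Delta}\|\le K_{\mathrm{D}}\|\matr{\Delta}\|$, so $\|\matr{\Delta}\|\ge\kappa_{\mathrm{D}}\|\matr{G}\|$.

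Property (ii) is immediate. If $\matr{G}=\matr{0}$, then $\matr{D}_{k-1}$ is a stationary point of a strictly convex function, hence the unique minimizer, so $\matr{D}_k=\matr{D}_{k-1}$.

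For (i), I will use the exact expansion around the minimizer:
\[
\phi(\matr{D}_{k-1})-\phi(\matr{D}_k)=\tfrac12\langle\matr{\Delta},\matr{H}\matr{\Delta}\rangle\ \ge\ \tfrac{\mu}{2}\|\matr{\Delta}\|^2=\lambda\|\matr{\Delta}\|^2 .
\]
Combining this with $\|\matr{\Delta}\|\ge\|\matr{G}\|/K_{\mathrm{D}}$ gives
\[
\lambda\|\matr{\Delta}\|^2\ge(\lambda/K_{\mathrm{D}})\|\matr{G}\|\,\|\matr{\Delta}\|=\eta_{\mathrm{D}}\|\matr{G}\|\,\|\matr{\Delta}\|,
\]
which is (i) with the stated constant.

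The only real obstacle is the uniform Lipschitz bound, i.e., controlling $\|\matr{B}^{\T}\matr{B}\|_2$ independently of $k$. The simplex constraint handles this through the entrywise bound above. If the Gram-entry argument needed care, the fallback is $\|\matr{B}\|_F^2=\sum_p\|\vect{a}_p\|^4\le m$, which gives the same bound.
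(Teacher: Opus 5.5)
Your proposal is correct and follows essentially the same route as the paper. Both arguments rest on the exact gradient identity $\nabla f_{\matr{A}_{k-1}}(\matr{D}_{k-1})=-\matr{H}(\matr{D}_k-\matr{D}_{k-1})$ for the quadratic, the Hessian bounds $2\lambda\matr{I}\preceq\matr{H}\preceq 2(m+\lambda)\matr{I}$, and the exact decrease $\tfrac12\langle\matr{\Delta},\matr{H}\matr{\Delta}\rangle\geq\lambda\|\matr{\Delta}\|^2$ combined with (iii). The only cosmetic difference is how you get $\|\matr{B}^{\T}\matr{B}\|_2\le m$: you use the maximal row sum of the entries $(\vect{a}_p^{\T}\vect{a}_q)^2\in[0,1]$, whereas the paper uses $\|\matr{B}\|_{\mathrm F}^2=\sum_p\|\vect{a}_p\|^4\le m$, which is exactly your stated fallback.
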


The proof is given in \cref{app:proof-direct-a}.

\section{Solving the indirect-fit model}\label{sec:semi-nnica}

For the indirect-fit model \eqref{eq:cost_indirect}, we apply the general framework
\eqref{eq:cost_general} with $f=f_{\mathrm{indir}}$.
Since the objective involves
$\bigl(\matr{C}^{(\ell)}\bigr)^{-1}$, we assume that each
$\matr{C}^{(\ell)}$ is nonsingular and restrict the indirect-fit
formulation to the square case $n=m$.
In this setting, set $\matr{U}=\matr{A}\in\Delta_n^m$ and represent the
absent second block by $\matr{V}=\matr{0}_{1\times1}\in
\Omega=\{\matr{0}_{1\times1}\}\subseteq\mathbb{R}^{1\times1}$.
For a candidate column $p_k$ and pair $(i_k,j_k)$,
define the corresponding elementary function by
\begin{equation}\label{eq:hk_indirect}
h_k(\alpha)
\eqdef
f_{\mathrm{indir}}\!\left(
\matr{A}_{k-1}
+\alpha(\matr{A}_{k-1})_{j_k,p_k}
(\vect{e}_{i_k}-\vect{e}_{j_k})
\vect{e}_{p_k}^{\T}
\right).
\end{equation}
The function $h_k(\alpha)$ has a quadratic polynomial form,
as shown in \eqref{eq:h_quadratic}, with coefficients
specified immediately thereafter.
The resulting algorithm is given in
\cref{alg:gradient-based-indirect}. 

\begin{subalgorithm}[H]
\caption{BCD-SubG for the indirect-fit model.}
\label{alg:gradient-based-indirect}
\KwIn{Problem \eqref{eq:cost_indirect}, a parameter $0 < \delta \leq 1/[n\sqrt{m(n-1)}]$, and a starting point $\matr{A}_{0}\in\Delta_{n}^{m}$.}
\KwOut{Sequence of iterations $\{\matr{A}_{k}\}$.}
\begin{itemize}
\item \textbf{For} $k = 1, 2, \dots$ [until a stopping criterion is satisfied] do 
\item[$\bullet$] Choose $p_k\in\{1,\ldots,m\}$ and an index pair
$(i_k,j_k)$ with $i_k\neq j_k$ and
$(\matr{A}_{k-1})_{j_k,p_k}>0$ such that
\begin{align*}
\operatorname{dist}\!\left(0,\,\partial h_{k}(0)\right)\;\geq\; \delta
\operatorname{dist}\!\left(\matr{0},\,\partial_{\Delta_n^m}f_{\mathrm{indir}}(\matr{A}_{k-1})\right),
\end{align*}
where $h_k$ is defined in \eqref{eq:hk_indirect}.
\item[$\bullet$] Compute $\alpha_k^*$ by applying
\hyperlink{rule:R1}{Rule~(R1)} to $h_k$.
\item[$\bullet$] Set $(\matr{A}_k)_{:,q}=(\matr{A}_{k-1})_{:,q}$ for all $q \neq p_k$ and update $(\matr{A}_k)_{:,p_k}=\matr{T}_{i_k,j_k}(\alpha_k^*)(\matr{A}_{k-1})_{:,p_k}$.
\item \textbf{End for}
\end{itemize}
\end{subalgorithm}

The $\matr{A}$-update applies \hyperlink{rule:R1}{Rule~(R1)} to $h_k$, so $\alpha_k^*$ is a global minimizer of $h_k$ over the feasible interval.

We next establish the model-specific properties required for
\cref{corollary:indirect_fit}. Since $\Delta_n^m$ is compact, the sequence
generated by \cref{alg:gradient-based-indirect} is contained in the compact
feasible set and therefore admits an accumulation point. We then establish
the descent, zero-step, and step-length estimates for the $\matr{A}$-update
in \cref{lem:A-update-indirect}.

\begin{lemma}[The $\matr{A}$-update]\label{lem:A-update-indirect}
In \cref{alg:gradient-based-indirect}, the update of $\matr{A}_k$ satisfies the following properties, with $\eta_{\mathrm{A},2}=1/2$:
\begin{itemize}
\item[(i)] For all $k$, we have
\[
h_k(0) - h_k(\alpha_k^*) \geq\eta_{\mathrm{A},2}\operatorname{dist}\!\left(
0,\partial h_k(0)
\right)|\alpha_k^*|.
\]
\item[(ii)] If $\operatorname{dist}\!\left(
0,\partial h_k(0)
\right)=0$, then $\alpha_k^*=0$.
\item[(iii)] If \cref{assump:interiority} holds for the $\matr{A}$-block,
then there exists a constant $K_{\mathrm{A},2}>0$, independent of $k$, such that, with
$\kappa_{\mathrm{A},2}\eqdef1/K_{\mathrm{A},2}$, for
all sufficiently large $k$, we have
\[
|\alpha_k^*|
\geq \kappa_{\mathrm{A},2}\,\operatorname{dist}\!\left(
0,\partial h_k(0)
\right).
\]
\end{itemize}
\end{lemma}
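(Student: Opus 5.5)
The proof rests on the observation that $h_k$ is a quadratic in $\alpha$. Write $h_k(\alpha)=c_0+c_1\alpha+c_2\alpha^2$ on the feasible interval $I_k=[-a/b,1]$, where $a=(\matr{A}_{k-1})_{i_k,p_k}$ and $b=(\matr{A}_{k-1})_{j_k,p_k}>0$. Only the $p_k$-th column of $\matr{A}$ changes, and it changes affinely in $\alpha$. Hence each entry of $\matr{A}^{\T}(\matr{C}^{(\ell)})^{-1}\matr{A}$ is at most quadratic in $\alpha$. The off-diagonal entries involving column $p_k$ are in fact affine, since the quadratic term only hits the $(p_k,p_k)$ diagonal entry, which offdiag discards. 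The squared norm is therefore a quadratic in $\alpha$, which is \eqref{eq:h_quadratic}.

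The constrained subdifferential of $h_k$ at $0$ over $I_k$ is computed by cases. If $0$ is interior ($a>0$), then $\partial h_k(0)=\{c_1\}$. If $a=0$, then $0$ is the left endpoint and $\operatorname{dist}(0,\partial h_k(0))=\max(0,-c_1)$, i.e.\ the magnitude of the feasible descent slope. Write $d_k\eqdef\operatorname{dist}(0,\partial h_k(0))$. In every case $d_k$ is the absolute slope in a feasible descent direction, and either $d_k=0$ or $d_k=|c_1|$ with $\operatorname{sign}(\alpha_k^*)=-\operatorname{sign}(c_1)$.

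\textbf{Part (ii).} This is immediate from Rule~(R1), which sets $\alpha^*=0$ when $d_k=0$.

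\textbf{Part (i).} Assume $d_k>0$. Since (R1) picks a global minimizer, $\alpha_k^*$ lies on the descent side, and we split according to the sign of $c_2$.
\begin{itemize}
\item If $c_2\le 0$, then $h_k$ is concave. For $\alpha$ on the descent side, $h_k(0)-h_k(\alpha)=|c_1||\alpha|-c_2\alpha^2\ge d_k|\alpha|$, which gives the bound with constant $1$.
\item If $c_2>0$, the unconstrained minimizer is $\bar\alpha=-c_1/(2c_2)$, and $\alpha_k^*$ is $\bar\alpha$ clipped to $I_k$, so $|\alpha_k^*|\le|\bar\alpha|$. Then
\[
h_k(0)-h_k(\alpha_k^*)=|\alpha_k^*|\bigl(|c_1|-c_2|\alpha_k^*|\bigr)\ge|\alpha_k^*|\bigl(|c_1|-|c_1|/2\bigr)=\tfrac12 d_k|\alpha_k^*|.
\end{itemize}
Either way we get $\eta_{\mathrm{A},2}=1/2$.

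\textbf{Part (iii).} This is the main obstacle, because it needs a uniform bound on the curvature and a uniform lower bound on the interval length. The step is clearly bounded below when $\alpha_k^*$ is interior or at a far endpoint; the danger is a tiny feasible interval or a huge curvature.

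Under \cref{assump:interiority}, for large $k$ all entries are at least $\varepsilon_{\mathrm{int}}$ and at most $1$. So $a/b\ge\varepsilon_{\mathrm{int}}$, the point $0$ is interior, and $d_k=|c_1|$. The interval $I_k$ then contains $[-\varepsilon_{\mathrm{int}},1]$ around $0$.

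Next, bound $|c_2|\le M$ uniformly. The increment $b(\vect{e}_{i_k}-\vect{e}_{j_k})$ has norm at most $\sqrt2$, and $\matr{A}$ ranges over the compact $\Delta_n^m$, while $(\matr{C}^{(\ell)})^{-1}$ is fixed. A crude bound of the form $M=\mathrm{const}\cdot\sum_\ell\|(\matr{C}^{(\ell)})^{-1}\|^2$ (with dimension factors) suffices. Similarly bound $|c_1|\le G$.

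The clipping argument then runs by cases.
\begin{itemize}
\item If $c_2>0$, then $|\alpha_k^*|=\min(|\bar\alpha|,\text{endpoint distance})$. This is at least $\min\bigl(|c_1|/(2M),\varepsilon_{\mathrm{int}}\bigr)$, which is at least $\min\bigl(1/(2M),\varepsilon_{\mathrm{int}}/G\bigr)\,|c_1|$.
\item If $c_2\le0$, the global minimizer is an endpoint on the descent side, at distance at least $\varepsilon_{\mathrm{int}}\ge(\varepsilon_{\mathrm{int}}/G)|c_1|$.
\end{itemize}
Setting $K_{\mathrm{A},2}=\max(2M,G/\varepsilon_{\mathrm{int}})$ gives the claim.
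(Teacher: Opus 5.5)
Your parts (ii), (i) for $c_2>0$, and the quadratic structure of $h_k$ coincide with the paper's proof. There is one incorrect step: the branch $c_2\le 0$.

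\textbf{The concave branch.} You claim that a global minimizer of $h_k$ over $[-a/b,1]$ lies on the descent side. For a strictly concave quadratic on an asymmetric interval this is false. Take $c_1=1$, $c_2=-6/5$ and the interval $[-1/10,1]$. The descent side is $\alpha<0$, yet $h_k(1)-h_k(0)=-0.2<-0.112=h_k(-1/10)-h_k(0)$. So (R1) returns $\alpha_k^*=1$, on the ascent side, and $h_k(0)-h_k(\alpha_k^*)=0.2<\tfrac12 d_k|\alpha_k^*|=0.5$.

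Your proof survives only because this branch is vacuous, and you already stated the reason. The varying entries of $\operatorname{offdiag}(\matr{A}^{\T}(\matr{C}^{(\ell)})^{-1}\matr{A})$ are affine in $\alpha$. Hence $h_k$ is a constant plus a sum of squares of affine functions. With $\matr{Q}^{(\ell)}=\matr{A}_{k-1}^{\T}(\matr{C}^{(\ell)})^{-1}$, this gives $c_2=2b^2\sum_{\ell}\sum_{q\neq p}(Q^{(\ell)}_{q,i}-Q^{(\ell)}_{q,j})^2\geq0$. Moreover, $c_2=0$ forces every difference to vanish, hence $c_1=0$ and $d_k=0$. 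The paper makes exactly this observation to get $c_2>0$ and then runs only your convex case. You should replace the case split by it; the same remark removes the concave branch in your part (iii).

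\textbf{Part (iii).} Your argument is correct but takes a different route from the paper.
\begin{itemize}
\item The paper uses interiority of the \emph{new} iterate $\matr{A}_k$. An endpoint step $\alpha_k^*\in\{-a/b,1\}$ would zero an entry of $\matr{A}_k$, so for large $k$ the step is unclipped and $\alpha_k^*=-c_1/(2c_2)$. Then $K_{\mathrm{A},2}$ need only bound $2c_2/b^2$.
\item You use interiority of the \emph{current} iterate $\matr{A}_{k-1}$ to show that the feasible interval contains $[-\varepsilon_{\mathrm{int}},1]$. You absorb a possibly clipped step through the bound $G\geq|c_1|$, obtaining $K_{\mathrm{A},2}=\max(2M,G/\varepsilon_{\mathrm{int}})$.
\end{itemize}
The paper's constant depends only on curvature and not on $\varepsilon_{\mathrm{int}}$. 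Yours needs the extra gradient bound, but it relies only on a lower bound for the entries of the point from which the step is taken.
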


The proof is given in \cref{app:proof-indirect-a}.

%\section{Parallel Computation Strategy}

\section{Numerical experiments}
\label{sec:numerical_experiments}

We evaluate BCD-SubG on synthetic mixtures, simulated magnetic resonance
spectroscopy (MRS) data, and joint diagonalization problems. The first two
experiments use the regularized direct-fit formulation~\eqref{eq:cost_direct}
and compare recovery accuracy and wall-clock time. The third uses the
indirect-fit formulation~\eqref{eq:cost_indirect} and compares final
objective values and wall-clock time.

\subsection{Experimental setup}

In~\eqref{Ica_model}, let
$\matr{S}\eqdef[\vect{s}(1),\ldots,\vect{s}(T)]$,
$\matr{E}\eqdef[\vect{\varepsilon}(1),\ldots,\vect{\varepsilon}(T)]$,
and $\matr{X}\eqdef[\vect{x}(1),\ldots,\vect{x}(T)]$
denote the source, noise, and observation matrices, respectively.
For mixture experiments, we define
$\operatorname{SNR}
\eqdef
20\log_{10}\bigl(
\|\matr{A}\matr{S}\|
/
\|\matr{E}\|
\bigr).$
Given an estimate $\widehat{\matr{A}}$, we recover the sources by
least squares: $\widehat{\matr{S}}=\widehat{\matr{A}}^{\dagger}\matr{X}$.
Let $\vect{s}_i^{\T}$ and $\widehat{\vect{s}}_i^{\T}$ be the $i$th
rows of $\matr{S}$ and $\widehat{\matr{S}}$, and let $\vect{a}_i$ and
$\widehat{\vect{a}}_i$ be the $i$th columns of $\matr{A}$ and
$\widehat{\matr{A}}$. We use scale-invariant matching errors,
following~\cite[Eq.~(18)]{zarzoso2009robust}:
\begin{align*}
\operatorname{err}_{\mathrm S}
\eqdef
\frac{1}{m}\sum_{i=1}^{m}
\left(
1-
\frac{
|\vect{s}_{i}^{\T}\widehat{\vect{s}}_{\pi_{\mathrm S}(i)}|^2
}{
\|\vect{s}_{i}\|^2
\|\widehat{\vect{s}}_{\pi_{\mathrm S}(i)}\|^2
}
\right),\ \
\operatorname{err}_{\mathrm A}
\eqdef
\frac{1}{m}\sum_{i=1}^{m}
\left(
1-
\frac{
|\vect{a}_{i}^{\T}\widehat{\vect{a}}_{\pi_{\mathrm A}(i)}|^2
}{
\|\vect{a}_{i}\|^2
\|\widehat{\vect{a}}_{\pi_{\mathrm A}(i)}\|^2
}
\right).
\end{align*}
The permutations $\pi_{\mathrm S},\pi_{\mathrm A}\in\set{S}_m$,
where $\set{S}_m$ is the set of permutations of $\{1,\ldots,m\}$,
are computed independently by greedy matching: each step selects
the unmatched pair with the largest squared normalized inner product.
Thus, the metrics assess source and mixing-matrix recovery separately.

At each parameter setting, all methods use the same 200 independent
instances. We report median errors or final objective values and
median end-to-end wall-clock times. Following~\cite{wang2014nonnegative},
each iterative method stops when
\[
\frac{|f_k-f_{k-1}|}
{\max\{|f_{k-1}|,\varepsilon_{\mathrm{den}}\}}
\leq 10^{-4},
\]
where $f_k$ is its objective value at iteration $k$ and
$\varepsilon_{\mathrm{den}}>0$ prevents division by a numerically zero value.

We test two BCD-SubG variants. BCD-SubG-Max selects an admissible
pair with a large one-dimensional stationarity measure;
BCD-SubG-Cyclic selects the first admissible pair in a fixed cyclic
scan. Both use the same ECT update. In Experiments~1 and~2, we set
$\lambda=10^{-6}$ and solve the regularized diagonal subproblem exactly
by~\eqref{eq:D-update-1} after a fixed number of $\matr{A}$-update phases.
The stopping test uses $f_k=f_{\mathrm{dir}}(\matr{A}_k,\matr{D}_k)$.
These are practical variants: the convergence theorem covers the
adaptive block rule in~\cref{alg:gradient-based-direct}, not this fixed schedule.

\subsection{Experiment 1: synthetic data}
\label{sec:numerical_experiments-synthetic}

We generate mixtures using~\eqref{Ica_model}, with two independent
sources drawn from Laplace and uniform distributions, respectively,
then centered and scaled to unit second moment.
From two random simplex vectors $\widetilde{\vect{a}}_1$ and
$\widetilde{\vect{a}}_2$, we set
$\vect{a}_1=\widetilde{\vect{a}}_1$ and
$\vect{a}_2=(1-\beta_{\mathrm{coh}})\widetilde{\vect{a}}_1
+\beta_{\mathrm{coh}}\widetilde{\vect{a}}_2$.
Smaller $\beta_{\mathrm{coh}}$ gives more coherent columns.
We generate Gaussian noise with covariance
\begin{align*}
\matr{\Sigma}_{\rho_{\mathrm{n}},\varphi}
=(1-\rho_{\mathrm{n}})\matr{I}_n+\rho_{\mathrm{n}}\matr{R}_{\varphi},
\ \ 
\matr{R}_{\varphi}
=\operatorname{Re}\!\left(
\vect{v}(\varphi)\vect{v}(\varphi)^{\herm}
\right),\ \
\vect{v}(\varphi)
=\left[
1,e^{\mathrm{i}\pi\sin\varphi},\ldots,
e^{\mathrm{i}(n-1)\pi\sin\varphi}
\right]^{\T}.
\end{align*}
Here, $\rho_{\mathrm{n}}\in[0,1)$ controls cross-channel correlation
and $\varphi$ is the interference angle. This model is adapted
from~\cite[Eq.~(2)]{qian2016mainlobe}. We scale the noise to the prescribed SNR.

We compare both variants with the Semi-NNICA methods
$\mathrm{JDC}^{+}_{\mathrm{LU-3}}$ and
$\mathrm{JDC}^{+}_{\mathrm{ADMM-3}}$~\cite{wang2014nonnegative}.
The defaults are $n=10$, $m=2$, $T=1000$,
$\operatorname{SNR}=10\,\mathrm{dB}$, $\rho_{\mathrm{n}}=0.2$,
$\beta_{\mathrm{coh}}=1$, and $\varphi=90^\circ$.
Keeping the other parameters fixed, we vary
$\operatorname{SNR}$,
$T$ or
$\beta_{\mathrm{coh}}$.
As shown in~\cref{fig:synthetic-results}, both BCD-SubG variants
generally achieve lower median recovery errors and shorter wall-clock
time than the two $\mathrm{JDC}^{+}$ methods across the tested settings.
Between the proposed variants, BCD-SubG-Cyclic often gives lower
recovery errors, whereas BCD-SubG-Max is often faster.

\begin{figure}[!t]
\centering
\includegraphics[width=\textwidth]{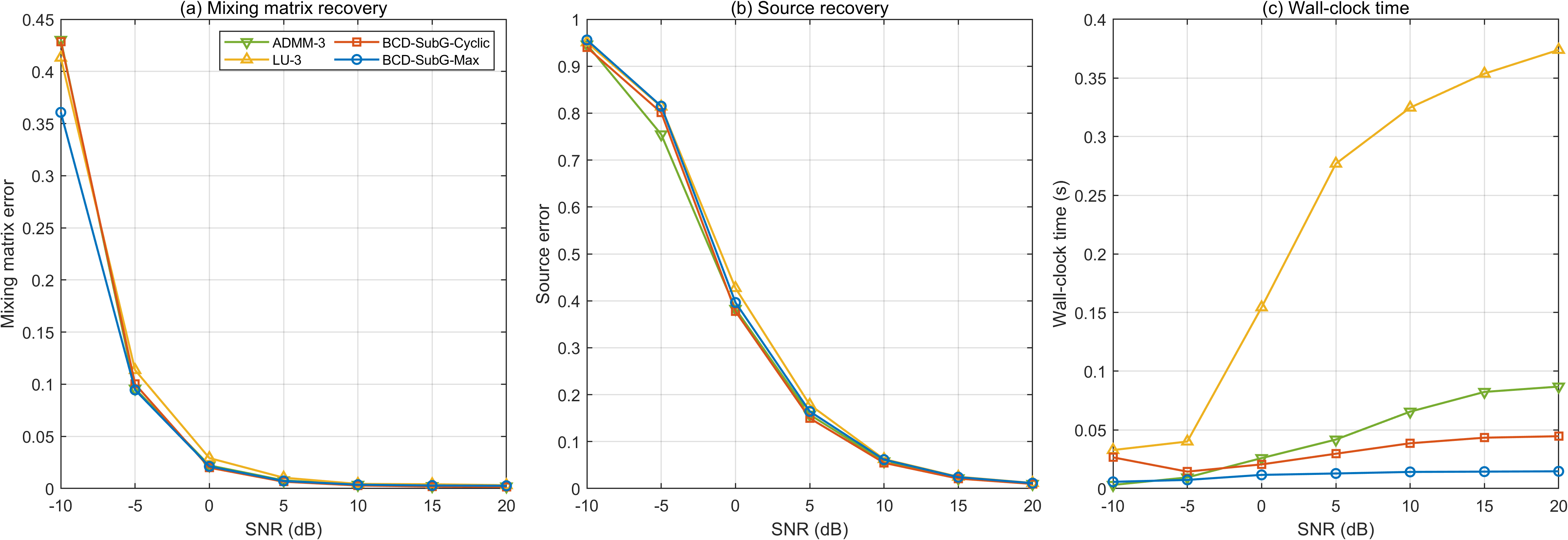}
\vspace{0.4em}

\includegraphics[width=\textwidth]{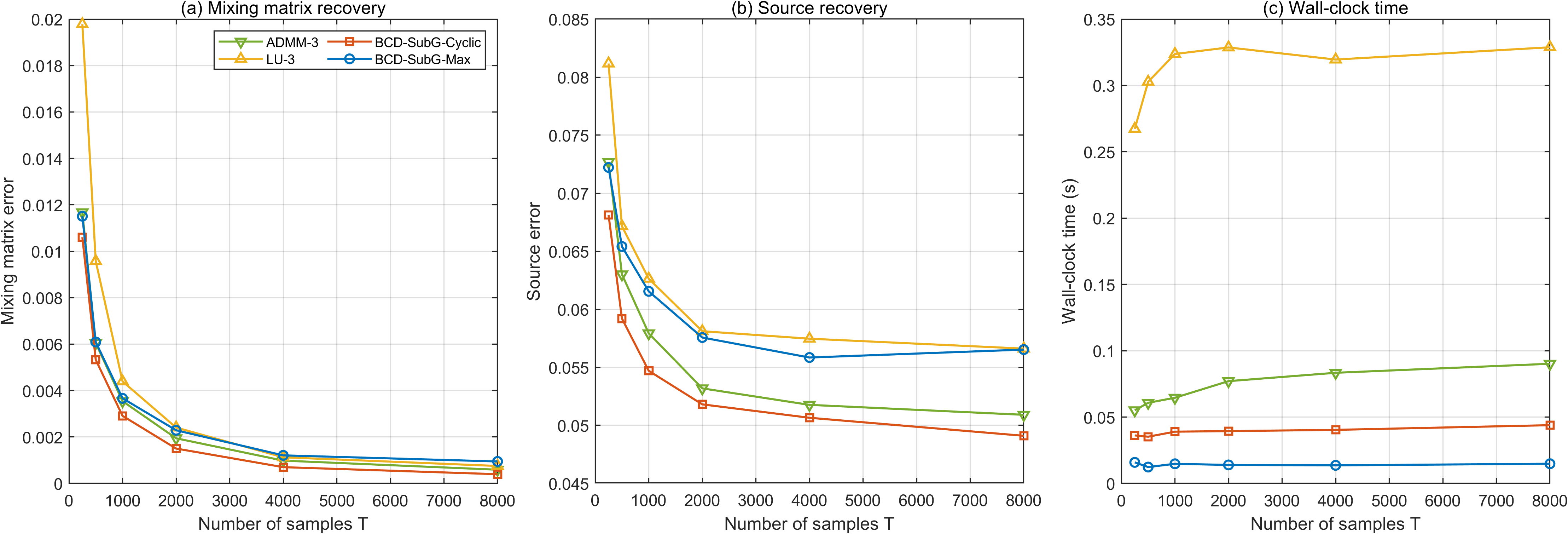}
\vspace{0.4em}

\includegraphics[width=\textwidth]{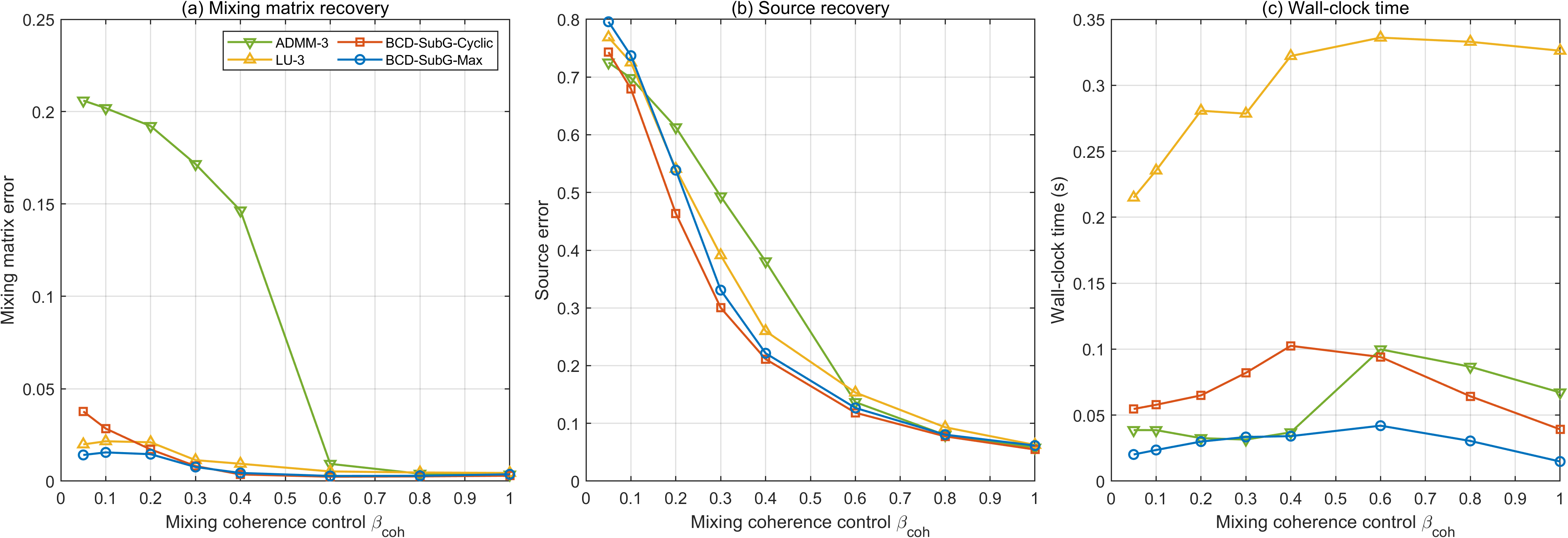}
\caption{Median mixing-matrix error (left), source error (center), and
wall-clock time (right) over 200 paired synthetic instances. Rows vary
SNR, $T$, and $\beta_{\mathrm{coh}}$, respectively; other parameters
remain at their default values. Lower is better.}
\label{fig:synthetic-results}
\end{figure}

\subsection{Experiment 2: simulated MRS data}
\label{sec:numerical_experiments-MRS}

MRS signals combine metabolite spectra with nonnegative mixing
coefficients representing concentrations~\cite{wang2014nonnegative,coloigner2014semi}.
We simulate choline and myo-inositol spectra using Lorentzian and
Gaussian peaks with prescribed locations and widths, then normalize
each spectrum to unit maximum amplitude. 
For each instance, we draw mixing-matrix entries independently from
$\operatorname{Unif}[0,1]$ and normalize each column to sum to one.
This normalization preserves the unregularized direct-fit representation
through rescaling of the diagonal factors. The quadratic penalty
in~\eqref{eq:cost_direct}, however, is imposed after normalization
and is not scale invariant.
We add independent zero-mean Gaussian noise at the prescribed SNR
and compare both variants with the two $\mathrm{JDC}^{+}$ methods.
The defaults are $n=5$, $m=2$, $T=1000$, and
$\operatorname{SNR}=10\,\mathrm{dB}$.
We vary $\operatorname{SNR}$
or $n$, keeping the other parameters fixed.

\Cref{fig:mrs-results} shows the same general pattern as the
synthetic experiments: both proposed variants tend to give lower
median recovery errors and shorter wall-clock time than the baselines.
BCD-SubG-Cyclic often favors recovery accuracy, while BCD-SubG-Max
often favors speed. 

\begin{figure}[!t]
\centering
\includegraphics[width=\textwidth]{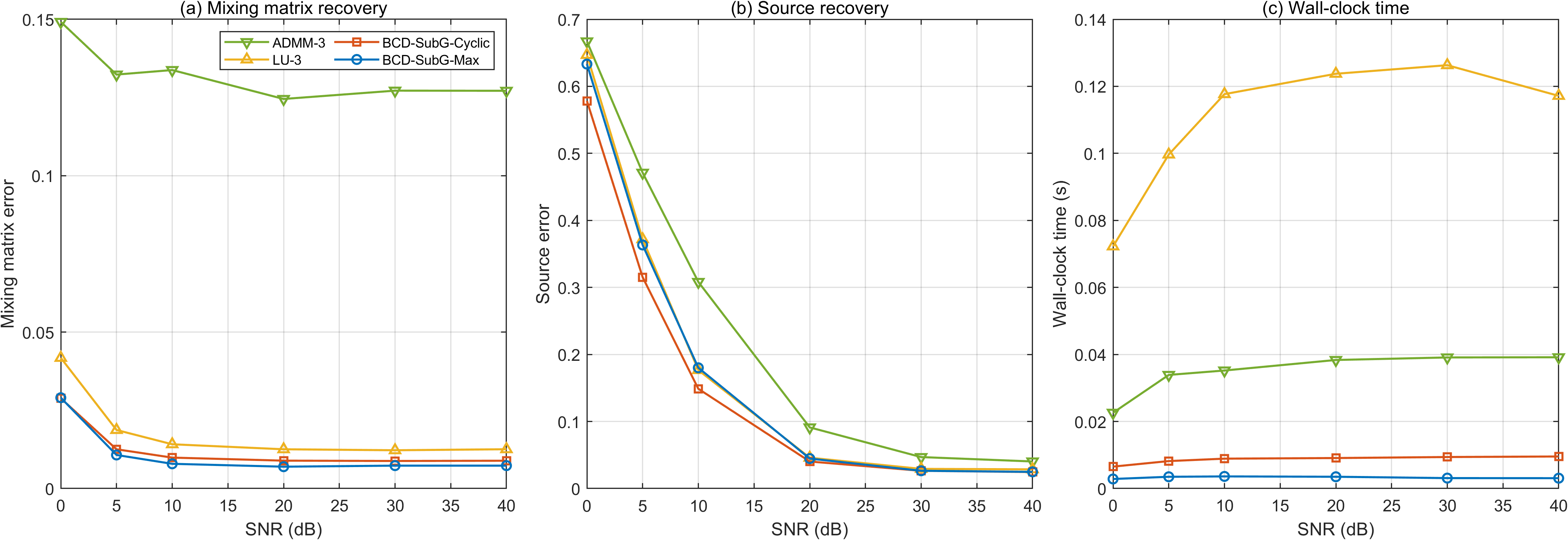}
\vspace{0.5em}

\includegraphics[width=\textwidth]{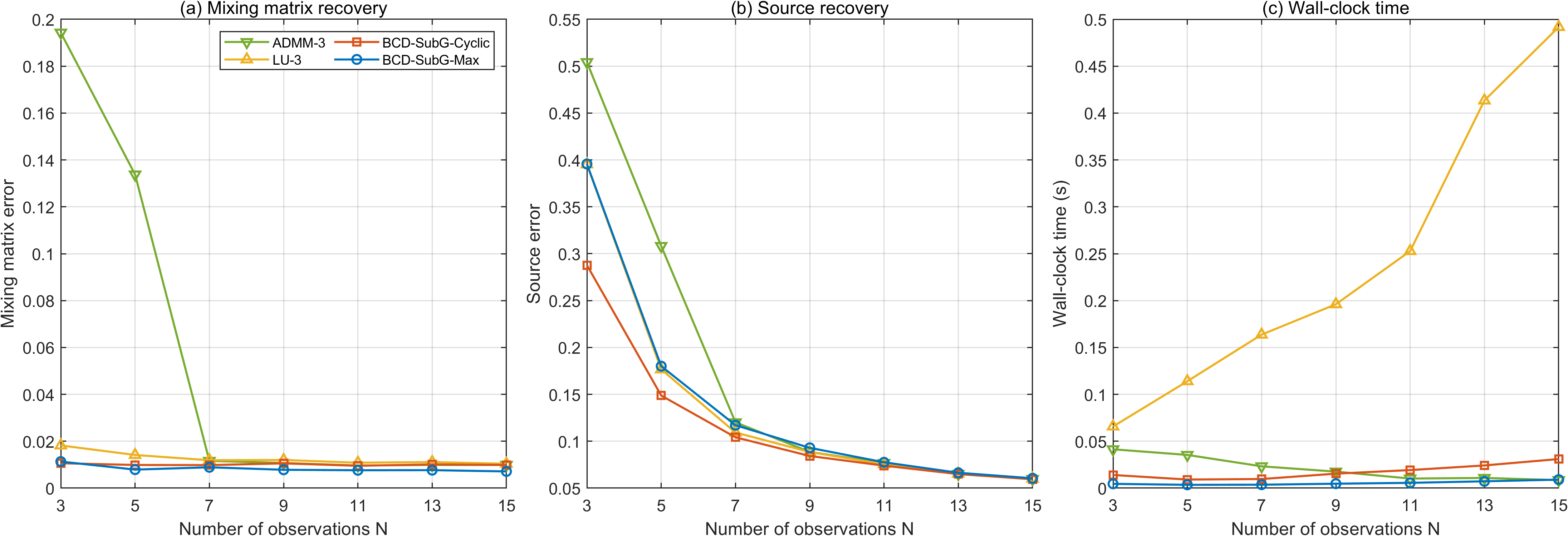}
\caption{Median mixing-matrix error (left), source error (center), and
wall-clock time (right) over 200 paired simulated MRS instances.
The top row varies SNR with $n=5$; the bottom row varies $n$ with
$\operatorname{SNR}=10\,\mathrm{dB}$. Lower is better.}
\label{fig:mrs-results}
\end{figure}

\subsection{Experiment 3: joint diagonalization}
\label{sec:numerical_experiments-indirect}

We compare both variants with first-order methods for the
simplex-constrained indirect-fit model~\eqref{eq:cost_indirect}.
For $n=m$, we generate data from the joint diagonalization
model~\cite{coloigner2014semi}, given by
$$\matr{C}^{(\ell)}
=\matr{A}\matr{D}^{(\ell)}\matr{A}^{\T}
+\matr{E}^{(\ell)},\ \ \ \ell=1,\ldots,L.$$
We sample the diagonal entries of $\matr{D}^{(\ell)}$ from
$\operatorname{Unif}[0.5,2]$ and generate random
$\matr{A}\in\Delta_n^n$, retaining only well-conditioned matrices.
The perturbations $\matr{E}^{(\ell)}$ are random symmetric matrices
scaled to keep all observed matrices positive definite.
Here, we measure the noise level by
$$\operatorname{SNR}\eqdef20\log_{10}
\bigl(
\|(\matr{A}\matr{D}^{(\ell)}\matr{A}^{\T})_{\ell=1}^{L}\|
/
\|(\matr{E}^{(\ell)})_{\ell=1}^{L}\|
\bigr).$$
All methods use the same feasible initial matrix
\[
\matr{A}_0
\eqdef
(1-\rho_{\mathrm{init}})\frac{\vect{1}\vect{1}^{\T}}{n}
+\rho_{\mathrm{init}}\matr{A}_{\mathrm{rand}},
\]
where the columns of $\matr{A}_{\mathrm{rand}}$ are sampled independently
from $\operatorname{Dirichlet}(0.15\vect{1})$.
We set $n=m=4$, $L=3$, and $\rho_{\mathrm{init}}=0.2$, and vary
$\operatorname{SNR}\in\{-10,-5,0,5,10,20\}\,\mathrm{dB}$.
We use the same stopping criterion as in the preceding experiments.
The baselines are Block-FW~\cite{lacostejulien2013block},
active-set FW (AS-FW)~\cite{cristofari2020active}, and
block-coordinate projected gradient
(Block-PG)~\cite{birgin2000nonmonotone}.
All three select columns cyclically and apply their respective update rules.

\begin{figure}[!t]
\centering
\includegraphics[width=0.49\textwidth]
    {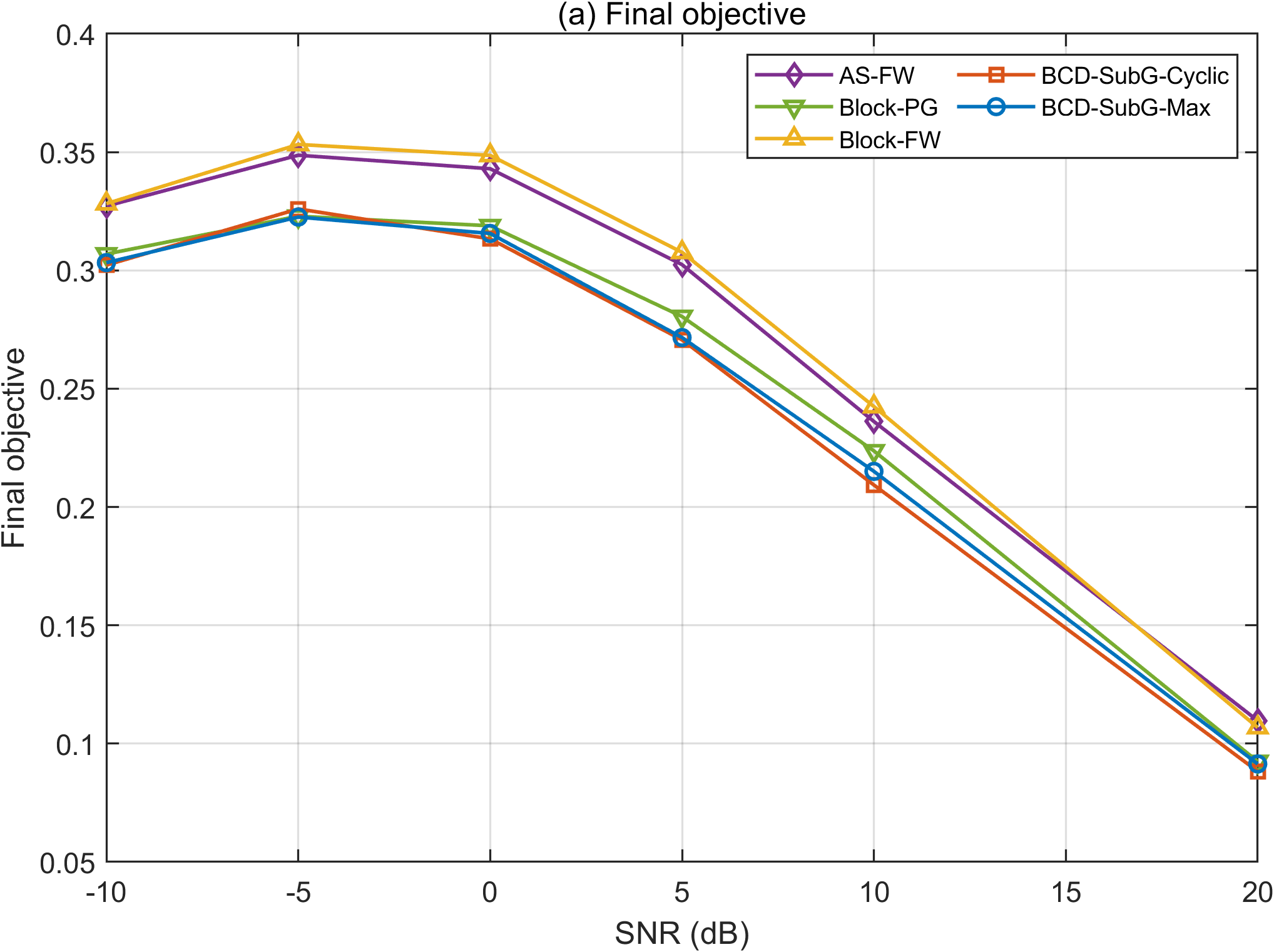}
\hfill
\includegraphics[width=0.49\textwidth]
    {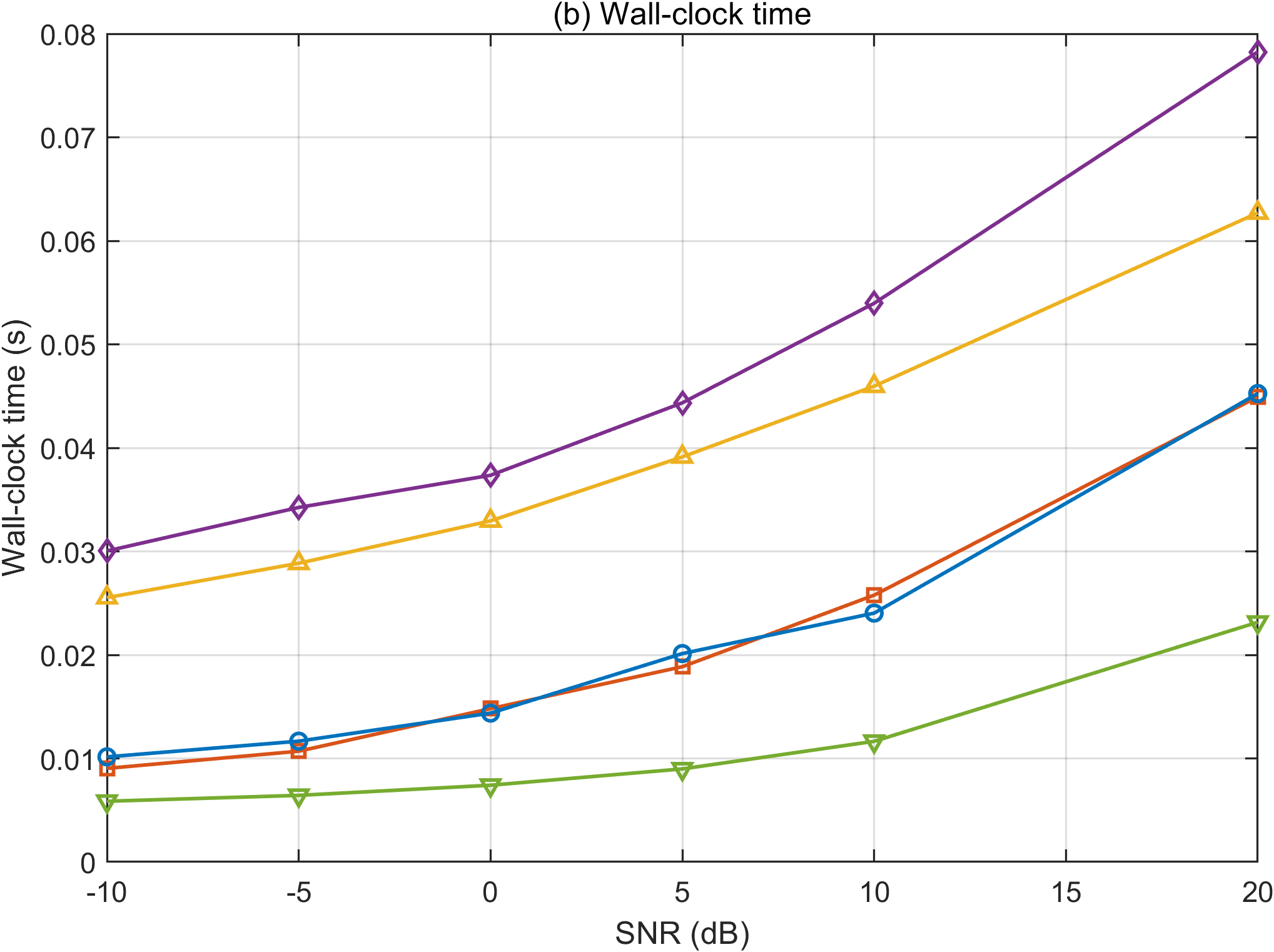}
\caption{Median final objective value (left) and wall-clock time
(right) over 200 paired trials at each SNR level. Lower is better.}
\label{fig:indirect-fit-comparison}
\end{figure}

As shown in~\cref{fig:indirect-fit-comparison}, at each tested SNR,
one of the two BCD-SubG variants achieves the lowest median final
objective value. Block-PG is consistently the fastest method,
while both BCD-SubG variants are faster than Block-FW and AS-FW.
These results are consistent with the example
in~\cref{eq:simplex-motivation}: Block-PG can be faster, whereas
the BCD-SubG variants can reach points with lower objective values.

\section{Conclusions}
\label{sec:discussion}

We developed BCD-SubG for nonconvex optimization over products of
probability simplices and a closed feasible set. The framework combines
feasibility-preserving ECT updates with constrained-subgradient-based
block and pair selection. We established
its global convergence, together with stationarity and rate
estimates under additional eventual-interiority conditions. We derived explicit updates
for the direct-fit and indirect-fit Semi-NNICA models and verified
the convergence conditions. 
Experiments on synthetic mixtures and simulated MRS data showed
generally lower recovery errors and shorter wall-clock time than the
tested Semi-NNICA baselines. In joint diagonalization, at each tested
SNR, one BCD-SubG variant achieved the lowest median final objective
value among the compared methods. Future work will develop more efficient block and pair updates
for larger problems, including parallel strategies inspired
by~\cite{richtarik2016parallel}, while preserving feasibility,
sufficient descent, and convergence guarantees.

\appendix

\crefalias{section}{appendix}

\section{Auxiliary results and proof of the general convergence theorem}
\label{app:block-pair-selection}
\label{app:general-convergence}

\begin{lemma}[Maximal feasible pair]\label{lem:max_ijp_existence}
For the fixed block $p$, set $\vect{\xi}_{\min}^{(p)}
\eqdef
\Proj{\partial_{\Delta_n}f_{(p)}(\vect{u}^{(p)})}(\vect{0})$.
If $\vect{\xi}_{\min}^{(p)} \neq \vect{0}$ and $(i^*,j^*)\in
\operatorname*{arg\,max}_{\substack{1\leq i,j\leq n\\ i\neq j,\ u_j^{(p)}>0}}
\left|
\left\langle
\vect{\xi}_{\min}^{(p)},
\matr{T}'_{i,j}(0)\vect{u}^{(p)}
\right\rangle
\right|$, then the exact distance to the constrained subdifferential at
$\alpha=0$ in \eqref{eq:elementary_subdifferential} coincides with this maximal value:
\begin{equation*}
\operatorname{dist}\!\left(0,\partial h^{(i^*,j^*)}_{(p)}(0)\right) = \left|\left\langle \vect{\xi}_{\min}^{(p)}, \matr{T}'_{i^*,j^*}(0)\vect{u}^{(p)} \right\rangle\right|.
\end{equation*}
\end{lemma}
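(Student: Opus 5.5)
The plan is to compute both sides explicitly in terms of the gradient $\vect{g}=\nabla f_{(p)}(\vect{u}^{(p)})$ and the componentwise form \eqref{eq:eta_form} of $\vect{\xi}_{\min}^{(p)}$. Maximality of $(i^*,j^*)$ is then used to rule out the boundary configurations in which the two sides could differ. Write $\vect{u}=\vect{u}^{(p)}$, $\vect{\xi}=\vect{\xi}_{\min}^{(p)}$ and $\mu^*$ for the dual variable in \eqref{eq:eta_form}.

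\emph{Step 1: compute both sides.} From \eqref{eq:elementary_column-stochastic} we have $\matr{T}'_{i,j}(0)\vect{u}=u_j(\vect{e}_i-\vect{e}_j)$. Hence the right-hand side equals $u_j|\xi_i-\xi_j|$, and $h'(0)=u_j(g_i-g_j)$.

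The feasible interval is $[-u_i/u_j,1]$, and $1>0$. So $\alpha=0$ is interior exactly when $u_i>0$, and in that case $\operatorname{dist}(0,\partial h(0))=|h'(0)|$. When $u_i=0$, the point $0$ is the left endpoint, the normal cone is $(-\infty,0]$, and $\operatorname{dist}(0,\partial h(0))=\max\{0,-h'(0)\}$.

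Because $u_j>0$, \eqref{eq:eta_form} gives $\xi_j=g_j+\mu^*$. If $u_i>0$, then also $\xi_i=g_i+\mu^*$, so $\xi_i-\xi_j=g_i-g_j$ and the identity holds for every such pair, maximal or not. The remaining work is the case $u_{i^*}=0$.

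\emph{Step 2: structure of $\vect{\xi}$ and positivity of the maximum.} By \eqref{eq:eta_form}, $\xi_k\le0$ whenever $u_k=0$. Since $\sum_k\xi_k=0$ and $\vect{\xi}\neq\vect{0}$, there is an index $k_+$ with $\xi_{k_+}>0$, which forces $u_{k_+}>0$. There is also an index $k_-$ with $\xi_{k_-}<0$. The pair $(k_-,k_+)$ is admissible, so the maximal value $M$ satisfies $M\ge u_{k_+}(\xi_{k_+}-\xi_{k_-})>0$.

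\emph{Step 3: the boundary case $u_{i^*}=0$.} Write $i=i^*$, $j=j^*$. The goal is to show $\xi_i<\xi_j$ and $g_i+\mu^*\le0$. Once these hold, $\xi_i=g_i+\mu^*$, so $h'(0)=u_j(\xi_i-\xi_j)<0$. Then $\max\{0,-h'(0)\}=u_j(\xi_j-\xi_i)=u_j|\xi_i-\xi_j|$, which is the claim.

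First, suppose $\xi_i\ge\xi_j$. Then $\xi_j\le\xi_i\le0$. Replacing $i$ by $k_+$ gives the admissible pair $(k_+,j)$ with $u_j(\xi_{k_+}-\xi_j)>u_j(\xi_i-\xi_j)\ge0$, which contradicts maximality. (If $k_+=j$, then $\xi_j>0\ge\xi_i$ already.) Hence $\xi_i<\xi_j$.

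Second, suppose $g_i+\mu^*>0$, so that $\xi_i=0$ and the value is $u_j\xi_j$. By the first part $\xi_j>0$. Replacing $i$ by $k_-$ gives the admissible pair $(k_-,j)$ with value $u_j(\xi_j-\xi_{k_-})>u_j\xi_j$, again contradicting maximality. Therefore $g_i+\mu^*\le0$.

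I expect Step 3 to be the main obstacle. There the identity genuinely fails for non-maximal pairs, and the argument depends on using the sign structure of $\vect{\xi}$ to produce a strictly better competitor pair in each bad subcase.
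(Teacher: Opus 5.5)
Your proof is correct and follows the paper's argument. Both compute $\operatorname{dist}(0,\partial h(0))$ via the interior versus left-endpoint cases and observe that the identity is automatic when $u_{i^*}>0$. In the boundary case $u_{i^*}=0$, both use maximality against competitor pairs $(k_\pm,j^*)$ with the same donor $j^*$ to force $\xi_{i^*}<\xi_{j^*}$ and $\gamma_{i^*}=0$, hence $h'(0)<0$. The only cosmetic difference is that the paper rules out $\xi_{i^*}=0$ outright, whereas you show just $g_{i^*}+\mu^*\le 0$, which already gives $\xi_{i^*}=g_{i^*}+\mu^*$.
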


\begin{proof}
Consider any index pair $(i,j)$ such that $u^{(p)}_i \geq 0$ and $u^{(p)}_j > 0$. By \eqref{eq:ij_transformation}, the feasible interval of $\alpha$ is $[c,1]$, with $c \eqdef -u^{(p)}_i/u^{(p)}_j \leq 0$. Let $\vect{g}^{(p)}
=\nabla f_{(p)}\bigl(\vect{u}^{(p)}\bigr)
=\bigl(g_1^{(p)},\ldots,g_n^{(p)}\bigr)^{\T}
\in\mathbb{R}^{n}$. By the chain rule, the derivative of the elementary function at $\alpha=0$ is
\begin{equation*}
\bigl(h^{(i,j)}_{(p)}\bigr)'(0) = \left\langle \vect{g}^{(p)}, \matr{T}'_{i,j}(0)\vect{u}^{(p)} \right\rangle = \bigl(g^{(p)}_i-g^{(p)}_j\bigr)u^{(p)}_j.
\end{equation*}
The constrained subdifferential of $h^{(i,j)}_{(p)}$ on $[c,1]$ at
$\alpha=0$ is given by $\partial_{[c,1]} h^{(i,j)}_{(p)}(0) = \bigl(h^{(i,j)}_{(p)}\bigr)'(0) + \set{N}_{0}[c,1]$. When $c < 0$, the normal cone $\set{N}_{0}[c,1]$ reduces to $\{0\}$; when $c = 0$, it equals $(-\infty, 0]$. Combining the above cases, the distance from the origin to the subdifferential evaluates to
\begin{equation*}
\operatorname{dist}\!\left(0,\,\partial h^{(i,j)}_{(p)}(0)\right) = 
\begin{cases} 
\bigl|g^{(p)}_i-g^{(p)}_j\bigr|u^{(p)}_j, & \text{if } u^{(p)}_i > 0, \\ 
\bigl(g^{(p)}_j-g^{(p)}_i\bigr)u^{(p)}_j, & \text{if } u^{(p)}_i = 0 \text{ and } \bigl(h^{(i,j)}_{(p)}\bigr)'(0)<0,\\ 
0, & \text{if } u^{(p)}_i = 0 \text{ and } \bigl(h^{(i,j)}_{(p)}\bigr)'(0) \geq 0. 
\end{cases}
\end{equation*}

In view of \eqref{eq:subdifferential_simplex}, the minimum-norm subgradient $\vect{\xi}_{\min}^{(p)} \in \partial_{\Delta_n} f_{(p)}(\vect{u}^{(p)})$ admits the following representation:
\begin{equation*}
\vect{\xi}_{\min}^{(p)} \eqdef \vect{g}^{(p)} + \mu^{(p)} \vect{1} - \sum_{i=1}^n \gamma_i^{(p)} \vect{e}_i=\bigl(\xi_1^{(p)},\ldots,\xi_n^{(p)}\bigr)^\T,
\end{equation*}
where $\mu^{(p)} \in \mathbb{R}$ and $\gamma_i^{(p)} \geq 0$ satisfy $\gamma_i^{(p)} u_i^{(p)} = 0$ for all $i = 1,\dots,n$. Since the chosen index pair $(i,j)$ satisfies $u^{(p)}_i \geq 0$ and $u^{(p)}_j > 0$, the complementarity condition forces $\gamma_j^{(p)}=0$. It then follows that
\begin{equation*}
\left\langle \vect{\xi}_{\min}^{(p)}, \matr{T}'_{i,j}(0)\vect{u}^{(p)} \right\rangle=\left\langle \vect{\xi}_{\min}^{(p)}\bigl(\vect{u}^{(p)}\bigr)^{\T}, \matr{T}'_{i,j}(0) \right\rangle=\bigl(\xi^{(p)}_i-\xi^{(p)}_j\bigr)u^{(p)}_j=\bigl(g^{(p)}_i-g^{(p)}_j-\gamma_i^{(p)}\bigr)u^{(p)}_j.
\end{equation*}

Now, focus on the index pair $(i^*,j^*)$ that makes the absolute value of this inner product maximal. We distinguish two cases with respect to the index $i^*$. Suppose first that $u^{(p)}_{i^*} > 0$. In this scenario, the complementarity condition forces $\gamma_{i^*}^{(p)} = 0$, and we have 
\begin{equation*}
\left|\left\langle \vect{\xi}_{\min}^{(p)}, \matr{T}'_{i^*,j^*}(0)\vect{u}^{(p)} \right\rangle\right|=\bigl|g^{(p)}_{i^*}-g^{(p)}_{j^*}\bigr|u^{(p)}_{j^*} = \operatorname{dist}\!\left(0,\,\partial h^{(i^*,j^*)}_{(p)}(0)\right).
\end{equation*}
Otherwise, we have $u^{(p)}_{i^*}=0$ and hence $\xi^{(p)}_{i^*}\leq0$.
Maximality forces $\xi^{(p)}_{i^*}<0$ and
$\xi^{(p)}_{j^*}>\xi^{(p)}_{i^*}$. Indeed, if
$\xi^{(p)}_{i^*}=0$, pairing the same donor $j^*$ with a suitably signed nonzero component
of $\vect{\xi}_{\min}^{(p)}$ gives a strictly larger value. If
$\xi^{(p)}_{j^*}\leq\xi^{(p)}_{i^*}$, pairing $j^*$ with a positive
component gives the same contradiction. Equation~\eqref{eq:eta_form}
therefore gives $\gamma_{i^*}^{(p)}=0$ and
$\bigl(h^{(i^*,j^*)}_{(p)}\bigr)'(0)<0$. We obtain
\begin{equation*}
\left|\left\langle \vect{\xi}_{\min}^{(p)}, \matr{T}'_{i^*,j^*}(0)\vect{u}^{(p)} \right\rangle\right| = \bigl(\xi^{(p)}_{j^*}-\xi^{(p)}_{i^*}\bigr)u^{(p)}_{j^*} = \bigl(g^{(p)}_{j^*}-g^{(p)}_{i^*}\bigr)u^{(p)}_{j^*} = \operatorname{dist}\!\left(0,\,\partial h^{(i^*,j^*)}_{(p)}(0)\right).
\end{equation*}
Combining both scenarios, we obtain the desired equality and complete the proof.
\end{proof}

\begin{remark}\label{rem:dist_equals_derivative}
The case characterization in the proof of
\cref{lem:max_ijp_existence} shows that, for any index pair $(i,j)$
with $u_j^{(p)}>0$, we have
\begin{equation*}
\operatorname{dist}\!\left(
0,\partial h^{(i,j)}_{(p)}(0)
\right)>0
\quad\Longrightarrow\quad
\operatorname{dist}\!\left(
0,\partial h^{(i,j)}_{(p)}(0)
\right)
=
\left|
\bigl(h^{(i,j)}_{(p)}\bigr)'(0)
\right|.
\end{equation*}
In particular, whenever a $\matr{U}$-update is performed in
\cref{alg:gradient-based}, the selected pair $(i_k,j_k)$ satisfies
$\operatorname{dist}\!\left(
0,\partial h_k(0)
\right)
=
|h_k'(0)|.$ 
Moreover, if the selected pair satisfies
$u_{i_k}^{(p_k)}=0$, then we have
$h_k'(0)<0$.
\end{remark}

We next prove the well-definedness result stated in
\cref{prop:block-pair-existence}.

\begin{proof}[Proof of \cref{prop:block-pair-existence}]

For simplicity, write $\matr{U}=\matr{U}_{k-1}$ and
$\matr{V}=\matr{V}_{k-1}$, and set
$\matr{\Xi}_{\min}
\eqdef\Proj{\partial_{\Delta_n^m}f_{\matr{V}}(\matr{U})}(\matr{0})
=(\vect{\xi}_{\min}^{(1)},\ldots,\vect{\xi}_{\min}^{(m)})$.
If $\partial_{\Delta_{n}^{m}} f_{\matr{V}}(\matr{U})$ contains $\matr{0}$, choose any block $p$, any $j$ with $u_j^{(p)}>0$, and any $i\neq j$; then \eqref{eq:grad_inequality_h} holds trivially. Hence, we assume that
$\matr{0}\notin\partial_{\Delta_n^m}f_{\matr{V}}(\matr{U})$,
so that $\matr{\Xi}_{\min}\neq\matr{0}$. 
For the minimum-norm subgradient $\matr{\Xi}_{\min}$, there exists a block $p \in \{1, \dots, m\}$ such that
\begin{equation*}
\left\|\vect{\xi}_{\min}^{(p)}\right\| \geq \frac{1}{\sqrt{m}}\left\|\matr{\Xi}_{\min}\right\|.
\end{equation*}
We choose this index as the updated block and set $p_k=p$. 
Let $\matr{M} \eqdef \vect{\xi}_{\min}^{(p)}\bigl(\vect{u}^{(p)}\bigr)^{\T}$, where $M_{i,j} = \xi^{(p)}_i u^{(p)}_j$ denotes its entries. Since $-\vect{\xi}_{\min}^{(p)}$ lies in the tangent cone $\set{T}_{\vect{u}^{(p)}}\Delta_{n}$, it satisfies $\sum_{i=1}^n \xi^{(p)}_i = 0$. Consequently, for each column $j$, we have $\sum_{i=1}^n M_{i,j} = 0$. To simplify the subsequent notation, we define the inner product evaluation as $p_{i,j}$, which reduces to 
\begin{equation*}
p_{i,j} \eqdef \left\langle \vect{\xi}_{\min}^{(p)}, \matr{T}'_{i,j}(0)\vect{u}^{(p)} \right\rangle = M_{i,j} - M_{j,j}.
\end{equation*}
For each fixed $j$, using $\sum_{i=1}^{n}M_{i,j}=0$, we obtain
\begin{equation*}
\sum_{i\neq j}|p_{i,j}|^2=
\sum_{i=1}^{n}|M_{i,j}-M_{j,j}|^2=\sum_{i=1}^{n}M_{i,j}^2
-2M_{j,j}\sum_{i=1}^{n}M_{i,j}
+nM_{j,j}^2\geq
\sum_{i=1}^{n}M_{i,j}^2.
\end{equation*}
Summing the above inequality over $j=1,\ldots,n$ and using the fact
that there are $n(n-1)$ ordered pairs $(i,j)$ with $i\neq j$, we obtain
\begin{equation*}
\max_{\substack{1\leq i,j\leq n\\i\neq j}}|p_{i,j}|
\geq
\frac{1}{\sqrt{n(n-1)}}
\left(
\sum_{j=1}^{n}\sum_{i\neq j}|p_{i,j}|^2
\right)^{1/2}
\geq
\frac{1}{\sqrt{n(n-1)}}\|\matr{M}\|.
\end{equation*}
Since $\vect{u}^{(p)} \in \Delta_n$, its Euclidean norm satisfies $\left\|\vect{u}^{(p)}\right\| \geq \frac{1}{\sqrt{n}}$. It follows that
\begin{equation*}
\left\|\matr{M}\right\| = \left\|\vect{\xi}_{\min}^{(p)}\right\|\left\|\vect{u}^{(p)}\right\| \geq \frac{1}{\sqrt{n}}\left\|\vect{\xi}_{\min}^{(p)}\right\| \geq \frac{1}{\sqrt{mn}}\left\|\matr{\Xi}_{\min}\right\|=\frac{1}{\sqrt{mn}}\operatorname{dist}\!\left(\matr{0}, \partial_{\Delta_{n}^{m}} f_{\matr{V}}(\matr{U})\right).
\end{equation*}
Pairs with $u_j^{(p)}=0$ have $p_{i,j}=0$, whereas the maximum above is
positive. We may therefore choose $(i,j)\in
\operatorname*{arg\,max}_{\substack{1\leq i,j\leq n\\i\neq j,\ u_j^{(p)}>0}}
|p_{i,j}|$ and set $(i_k,j_k)=(i,j)$. By \cref{lem:max_ijp_existence}, we have
\begin{equation*}
\operatorname{dist}\!\left(0,\partial h^{(i,j)}_{(p)}(0)\right)=\left|\left\langle \vect{\xi}_{\min}^{(p)}, \matr{T}'_{i,j}(0)\vect{u}^{(p)} \right\rangle\right|=\max_{\substack{1\leq i,j\leq n\\i\neq j}} |p_{i,j}|.
\end{equation*}

Combining the above inequalities, there always exists an index pair $(i_k,j_k)$ and a block $p_k$ such that
\begin{equation*}
\operatorname{dist}\!\left(0,\partial h^{(i_k,j_k)}_{(p_k)}(0)\right)\geq \frac{1}{n\sqrt{m(n-1)}}\operatorname{dist}\!\left(\matr{0}, \partial_{\Delta_{n}^{m}} f_{\matr{V}}(\matr{U})\right). \qedhere
\end{equation*}
\end{proof}

\begin{remark}\label{rem:pair-reversal}
Without loss of generality, the index pair $(i,j)$ selected in
\cref{alg:gradient-based} can be considered to satisfy $u_i^{(p)}\leq u_j^{(p)}$. 
Indeed, if the selected index pair satisfies
$u_i^{(p)}>u_j^{(p)}>0$, then swap $i$ and $j$, and set $\widetilde{\alpha}=-(u_j^{(p)}/u_i^{(p)})\alpha$. We immediately obtain $\matr{T}_{i,j}(\alpha)\vect{u}^{(p)}
=
\matr{T}_{j,i}(\widetilde{\alpha})\vect{u}^{(p)}$, and the feasible interval for $\alpha$ is mapped bijectively onto the
feasible interval for $\widetilde{\alpha}$.

Since $u_i^{(p)},u_j^{(p)}>0$, the origin is an interior point of both
feasible intervals. Hence, we have
\[
\operatorname{dist}\!\left(0,\partial h^{(j,i)}_{(p)}(0)\right)
=
\frac{u_i^{(p)}}{u_j^{(p)}}
\operatorname{dist}\!\left(
0,\partial h^{(i,j)}_{(p)}(0)
\right)
\geq
\operatorname{dist}\!\left(
0,\partial h^{(i,j)}_{(p)}(0)
\right).
\]
Therefore, the reversed pair $(j,i)$ also satisfies
\eqref{eq:grad_inequality_h}.
If $\alpha^*$ is an output of either line search rule for $(i,j)$, then
$\widetilde{\alpha}^*=-(u_j^{(p)}/u_i^{(p)})\alpha^*$ is a corresponding output of the
same rule for $(j,i)$ and produces the same ECT update. Whenever a rule has
more than one eligible minimizer, we use this correspondence as the
tie-breaking convention. This justifies assuming
$u_i^{(p)}\leq u_j^{(p)}$.
\end{remark}

We next establish the constrained \L{}ojasiewicz inequality on the product space
$\Delta_n^m\times\Omega$ used in the convergence analysis.

\begin{lemma}[Product space constrained \L{}ojasiewicz inequality]\label{lem:product-lojasiewicz}
Let $f$ admit a real-analytic extension to a neighborhood of
$\Delta_n^m\times\Omega$. Assume that $\Omega$ is a closed feasible set and that, at every $\matr{V}^*\in\Omega$, it admits a
real-analytic parametrization satisfying conditions~(i)--(ii) of
\cref{lem:nonsmooth-lojasiewicz}. Then, at each
$(\matr{U}^*,\matr{V}^*)\in\Delta_n^m\times\Omega$, there exist
$r_{\mathrm{L}},C_{\mathrm{L}}>0$ and $\theta\in(0,1/2]$ such that every feasible
$(\matr{U},\matr{V})$ satisfying
$\|(\matr{U},\matr{V})-(\matr{U}^*,\matr{V}^*)\|<r_{\mathrm{L}}$ obeys
\begin{equation*}\label{eq:lojasiewicz_two}
\left|f(\matr{U},\matr{V})-f(\matr{U}^*,\matr{V}^*)\right|^{1-\theta}\leq C_{\mathrm{L}}\left(\operatorname{dist}\!\left(\matr{0},\partial_{\Delta_n^m}f_{\matr{V}}(\matr{U})\right)^2+g^{-}_{f_{\matr{U}}}(\matr{V})^2\right)^{1/2}. \tag{\textbf{\L{}-UV}}
\end{equation*}
\end{lemma}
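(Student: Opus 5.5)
The plan is to reduce \eqref{eq:lojasiewicz_two} to \cref{lem:nonsmooth-lojasiewicz}, applied on the product set $\set{M}\eqdef\Delta_n^m\times\Omega$, and then to compare the stationarity measure $g^-_f$ on $\set{M}$ with the block measures on the right-hand side.

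\textbf{Step 1: a parametrization of $\set{M}$.} Fix $(\matr{U}^*,\matr{V}^*)$. For $\Omega$ I take the parametrization $\tau_\Omega:\set{U}_\Omega\to\Omega$ at $\matr{V}^*$ supplied by the hypothesis. For each simplex factor I use an analytic local chart of $\Delta_n$ at $\vect{u}^{*(p)}$. The simplest choice is the Hadamard-square map on an open set of $\mathbb{R}^n$, $\vect{z}\mapsto \vect{z}\hadam\vect{z}/\|\vect{z}\|^2$, which is analytic away from $\vect{0}$. I put $\vect{z}^*=\sqrt{\vect{u}^{*(p)}}$ (entrywise).
\begin{itemize}
\item Condition (i) is immediate.
\item Condition (ii) holds because every $\vect{u}\in\Delta_n$ near $\vect{u}^{*(p)}$ is the image of $\sqrt{\vect{u}}$, and $\sqrt{\vect{u}}$ is close to $\sqrt{\vect{u}^{*(p)}}$ by continuity of the square root.
\end{itemize}
The product map $\tau=(\tau_{\Delta}^{(1)},\ldots,\tau_\Delta^{(m)},\tau_\Omega)$ is componentwise analytic. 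Since it is a product, preimages of product neighborhoods contain products of neighborhoods, so it inherits (i)--(ii). \cref{lem:nonsmooth-lojasiewicz} then gives $r_{\mathrm L},C'_{\mathrm L},\theta$ with
\[
|f(\matr{U},\matr{V})-f(\matr{U}^*,\matr{V}^*)|^{1-\theta}\leq C'_{\mathrm L}\,g^-_f(\matr{U},\matr{V})
\]
on $\set{M}$.

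\textbf{Step 2: compare the stationarity measures.} I would show that
\[
g^-_f(\matr{U},\matr{V})\le\bigl(\operatorname{dist}(\matr{0},\partial_{\Delta_n^m}f_{\matr{V}}(\matr{U}))^2+g^-_{f_{\matr{U}}}(\matr{V})^2\bigr)^{1/2},
\]
after which the lemma follows with $C_{\mathrm L}=C'_{\mathrm L}$. The argument has three parts.
\begin{itemize}
\item \emph{Tangent cone of the product.} I claim $\set{T}_{(\matr{U},\matr{V})}\set{M}\subseteq\set{T}_{\matr{U}}\Delta_n^m\times\set{T}_{\matr{V}}\Omega$. This follows from the definition of the Bouligand cone by projecting sequences onto each factor.
\item \emph{Rewrite $g^-_f$.} Write $-\nabla f=(\vect{a},\vect{b})$. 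By the definition, $g^-_f=\sup\{\langle -\nabla f,\vect{w}\rangle:\vect{w}\in\set{T},\|\vect{w}\|\le1\}_+$. For a closed cone, $\|\vect{y}\|^2-\operatorname{dist}(\vect{y},\set{T})^2=\|\Proj{\set{T}}\vect{y}\|^2$, and this equals $\bigl(\max_{\vect{w}\in\set{T},\|\vect{w}\|\le1}\langle\vect{y},\vect{w}\rangle\bigr)^2$.
\item \emph{Split along the product.} For $\vect{w}=(\vect{w}_1,\vect{w}_2)$ with $\|\vect{w}_1\|^2+\|\vect{w}_2\|^2\le1$, I bound
\[
\langle \vect{a},\vect{w}_1\rangle+\langle\vect{b},\vect{w}_2\rangle\le s_1\|\vect{w}_1\|+s_2\|\vect{w}_2\|\le(s_1^2+s_2^2)^{1/2},
\]
where $s_1,s_2$ are the block support values. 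Here $s_1=g^-_{f_{\matr{V}}}(\matr{U})$ on $\Delta_n^m$, and $s_2=g^-_{f_{\matr{U}}}(\matr{V})$. Enlarging the cone only increases the supremum, so the inclusion from the first part is all that is needed.
\end{itemize}
Finally, since $\Delta_n^m$ is closed and convex, the preliminaries give $g^-_{f_{\matr{V}}}(\matr{U})=\operatorname{dist}(\matr{0},\partial_{\Delta_n^m}f_{\matr{V}}(\matr{U}))$.

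\textbf{Main obstacle.} I expect the delicate point to be the support-function identity $g^-=\max_{\vect{w}\in\set{T},\|\vect{w}\|\le1}\langle-\nabla f,\vect{w}\rangle$ for a \emph{nonconvex} closed cone $\set{T}_{\matr{V}}\Omega$, where the projection may be non-unique. I would prove it directly for any metric projection $\vect{p}$ of $\vect{y}$ onto a closed cone. Optimality along the ray $t\vect{p}$ gives $\langle \vect{y}-\vect{p},\vect{p}\rangle=0$, hence $\|\vect{p}\|^2=\langle\vect{y},\vect{p}\rangle$. Then $\|\vect{y}\|^2-\operatorname{dist}^2=\|\vect{p}\|^2$, and comparing with other cone elements $t\vect{w}$ shows that $\|\vect{p}\|$ equals the maximal normalized inner product. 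A secondary check is that the square-root chart genuinely satisfies (ii) at boundary points of the simplex; this holds because $\vect{u}\mapsto\sqrt{\vect{u}}$ is a continuous right inverse.
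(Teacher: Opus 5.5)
Your proof is correct, and Step 1 matches the paper's. Both use the normalized Hadamard-square chart with the entrywise square root as a continuous right inverse, the same product argument for conditions (i)--(ii), and then \cref{lem:nonsmooth-lojasiewicz} on $\Delta_n^m\times\Omega$.

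The difference is in Step 2. The paper asserts $\set{T}_{(\matr{U},\matr{V})}(\Delta_n^m\times\Omega)=\set{T}_{\matr{U}}\Delta_n^m\times\set{T}_{\matr{V}}\Omega$. It then reads off the exact identity \eqref{eq:dist_grad_g}, because the distance to a product set splits into a sum of squares. You use only the inclusion $\subseteq$, which holds for Bouligand cones of arbitrary closed products. Combined with the support-function description of $g^-$ on a possibly nonconvex closed cone, this gives the one-sided bound $g^-_f\leq(s_1^2+s_2^2)^{1/2}$. That is exactly the direction the lemma needs.

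Your argument is also self-contained at a point the paper leaves unjustified. The cone equality does hold here, but only because $\Delta_n^m$ is polyhedral: every tangent direction at $\matr{U}$ is a feasible direction, so it can be paired with any tangent sequence in $\Omega$. For general closed factors the equality can fail.

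What the paper's identity buys is the reverse inequality, which is used later, outside this lemma, in the proof of \cref{thm:Schneider_frame_two}. Both the verification of \eqref{cond:A2} and the final stationarity claim need $g^-_f=0$ (resp.\ $g^-_f\to0$) to force both block measures to vanish. Your bound alone would have to be supplemented by the reverse inclusion just described to cover those steps.
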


\begin{proof}
Fix $(\matr{U}^*,\matr{V}^*)$ and set
$\set{U}_{\matr{U}}\eqdef
(\mathbb{R}^n\setminus\{\vect{0}\})^m$. Define
\[
\tau_{\matr{U}}(\matr{W})_{i,p}
\eqdef\frac{W_{i,p}^2}{\sum_{r=1}^nW_{r,p}^2},
\ \  
\sigma(\matr{U})_{i,p}\eqdef\sqrt{U_{i,p}}.
\]
The map $\tau_{\matr{U}}$ is real analytic on its open domain, while
$\sigma$ is continuous and
$\tau_{\matr{U}}\circ\sigma$ is the identity on $\Delta_n^m$.
Thus, if $\matr{W}^*=\sigma(\matr{U}^*)$ and
$\set{W}_{\matr{U}}$ is any open neighborhood of $\matr{W}^*$, then
$\sigma^{-1}(\set{W}_{\matr{U}})$ is a relatively open
neighborhood of $\matr{U}^*$ contained in
$\tau_{\matr{U}}(\set{W}_{\matr{U}})$. Hence the simplex map
satisfies conditions~(i)--(ii), including at boundary points.

Let $\tau_{\matr{V}}:\set{U}_{\matr{V}}\to\Omega$ be the local
parametrization at $\matr{V}^*$. The product map
$\tau(\matr{W},\vect{z})
\eqdef(\tau_{\matr{U}}(\matr{W}),\tau_{\matr{V}}(\vect{z}))$
also satisfies conditions~(i)--(ii): every neighborhood of the preimage
contains a product of neighborhoods, whose image contains a relatively open
product neighborhood of $(\matr{U}^*,\matr{V}^*)$.
Because $\Delta_n^m\times\Omega$ is closed,
\cref{lem:nonsmooth-lojasiewicz} applies. 
Finally, the tangent cone of the product is
$\set{T}_{\matr{U}}\Delta_n^m\times \set{T}_{\matr{V}}\Omega$.
The product geometry therefore gives
\begin{equation}\label{eq:dist_grad_g}
g^{-}_f(\matr{U},\matr{V})^2
=
\operatorname{dist}\!\left(
\matr{0},
\partial_{\Delta_n^m}f_{\matr{V}}(\matr{U})
\right)^2
+
g^{-}_{f_{\matr{U}}}(\matr{V})^2.
\end{equation}
Combining this identity with \eqref{eq:lojasiewicz} proves the result.
\end{proof}

We now prove \cref{thm:Schneider_frame_two} using
\cref{lem:product-lojasiewicz} and the abstract convergence theorem.

\begin{proof}[Proof of \cref{thm:Schneider_frame_two}]
Let $\vect{x}_k=\vecl(\matr{U}_k,\matr{V}_k)$, using the
isometric identification specified in the notation section and the same
symbol $f$ for the objective in these coordinates. Then, we have $\|\vect{x}_k-\vect{x}_{k-1}\|=\sqrt{\|\matr{U}_k-\matr{U}_{k-1}\|^2+\|\matr{V}_k-\matr{V}_{k-1}\|^2}$. By the ECT update rule \eqref{eq:ij_transformation}, when the $p_k$-column of $\matr{U}$ is updated along the index pair
$(i_k,j_k)$, let $b_k \eqdef (\matr{U}_{k-1})_{j_k,p_k}$, we have $\|\matr{U}_k-\matr{U}_{k-1}\|
=
\sqrt{2}b_k|\alpha_k^*|$. 

\subsubsection*{Verification of \eqref{cond:A1}} 
Suppose first that \eqref{eq:update_selection_inequality} holds. 
It then follows immediately from the characterization of $g^{-}_f(\vect{x})$ in \eqref{eq:dist_grad_g} that
$g^{-}_{f_{\matr{U}_{k-1}}}(\matr{V}_{k-1})\geq \sqrt{2}g^{-}_f(\vect{x}_{k-1})/2$.
Since $\matr{U}_k=\matr{U}_{k-1}$, we obtain from \eqref{cond:V1} that
\begin{equation*}
f(\vect{x}_{k-1})-f(\vect{x}_k)=f_{\matr{U}_{k-1}}(\matr{V}_{k-1})
-
f_{\matr{U}_{k-1}}(\matr{V}_{k})
\geq
\frac{\sqrt{2}\eta_{\mathrm{V}}}{2}
g^{-}_f(\vect{x}_{k-1})
\|\vect{x}_k-\vect{x}_{k-1}\|.
\end{equation*}
Otherwise, we obtain $\operatorname{dist}\!\left(\matr{0},\partial_{\Delta_n^m}f_{\matr{V}_{k-1}}(\matr{U}_{k-1})\right)\geq \sqrt{2}g^{-}_f(\vect{x}_{k-1})/2$. By the definition of the elementary function $h_k(\alpha)$ in
\eqref{eq:elementary_function} and \eqref{eq:grad_inequality_h}, we have $f_{\matr{V}_{k-1}}(\matr{U}_{k-1})=h_k(0)$ and $f_{\matr{V}_{k-1}}(\matr{U}_k)=h_k(\alpha_k^*)$. Since $\matr{V}_k=\matr{V}_{k-1}$ and $0<b_k\leq1$, we have $\|\vect{x}_k-\vect{x}_{k-1}\|
=
\sqrt{2}\,b_k|\alpha_k^*|
\leq
\sqrt{2}|\alpha_k^*|$. Combining this bound with \eqref{cond:U1} and \eqref{eq:grad_inequality_h}, we obtain
\begin{equation*}
f(\vect{x}_{k-1})-f(\vect{x}_k)
=h_k(0)
-
h_k(\alpha_k^*)\geq
\frac{\eta_{\mathrm{U}}\delta}{2}
g^{-}_f(\vect{x}_{k-1})
\|\vect{x}_k-\vect{x}_{k-1}\|.
\end{equation*}
Therefore, \eqref{cond:A1} holds with $\eta
=
\min\left\{
\frac{\sqrt{2}\eta_{\mathrm{V}}}{2},
\frac{\eta_{\mathrm{U}}\delta}{2}
\right\}$.

\subsubsection*{Verification of \eqref{cond:A2}}

If $g^{-}_f(\vect{x}_{k-1})=0$, then \eqref{eq:dist_grad_g} gives
\[
\operatorname{dist}\!\left(\matr{0},
\partial_{\Delta_n^m}f_{\matr{V}_{k-1}}(\matr{U}_{k-1})\right)=0,
\\ \ 
g^{-}_{f_{\matr{U}_{k-1}}}(\matr{V}_{k-1})=0.
\]
Hence \eqref{eq:update_selection_inequality} holds, and
\eqref{cond:V2} gives $\matr{V}_k=\matr{V}_{k-1}$, while
$\matr{U}_k=\matr{U}_{k-1}$ by construction.
Thus \eqref{cond:A2} holds. 
By \cref{lem:product-lojasiewicz}, \eqref{eq:lojasiewicz_two} holds.
Therefore, \cref{thm:Schneider_frame} implies that the whole sequence converges to
$(\matr{U}^*,\matr{V}^*)$.

\subsubsection*{Verification of \eqref{cond:A3}}

If \eqref{eq:update_selection_inequality} holds, then by \eqref{cond:V3}, it follows that 
\begin{equation*}
\|\vect{x}_k-\vect{x}_{k-1}\|=\|\matr{V}_k-\matr{V}_{k-1}\|
\geq
\frac{\sqrt{2}\kappa_{\mathrm{V}}}{2}
g^{-}_f(\vect{x}_{k-1}).
\end{equation*}
Otherwise, applying \cref{assump:interiority} to the $\matr{U}$-block, we have $b_k\geq\varepsilon_{\mathrm{int}}$ for all sufficiently
large $k$. Using \eqref{cond:U3} and \eqref{eq:grad_inequality_h}, we obtain
\begin{equation*}
\|\vect{x}_k-\vect{x}_{k-1}\|=\sqrt{2}\,b_k|\alpha_k^*|\geq \varepsilon_{\mathrm{int}}\delta\kappa_{\mathrm{U}}g^{-}_f(\vect{x}_{k-1}).
\end{equation*}
Hence \eqref{cond:A3} holds with $\kappa
=
\min\left\{
\frac{\sqrt{2}\kappa_{\mathrm{V}}}{2},
\varepsilon_{\mathrm{int}}\delta\kappa_{\mathrm{U}}
\right\}$. 
By \cref{thm:Schneider_frame}, we have
$g^{-}_f(\vect{x}_k)\to0$, and the convergence rate in
\eqref{eq:conver_rate} follows. Moreover, by
\eqref{eq:dist_grad_g}, we have 
\[
\operatorname{dist}\!\left(
\matr{0},
\partial_{\Delta_n^m}f_{\matr{V}_k}(\matr{U}_k)
\right)\to0,
\\ \ 
g^{-}_{f_{\matr{U}_{k}}}(\matr{V}_{k})\to0.
\]
Let $\matr{\Xi}_{k,\min}$ be the minimum-norm element of
$\partial_{\Delta_n^m}f_{\matr{V}_k}(\matr{U}_k)$.
Then we have $\matr{\Xi}_{k,\min}\to\matr{0}$. Since $\nabla_{\matr{U}}f$ is continuous
and the normal-cone mapping of the closed convex set $\Delta_n^m$ has a
closed graph, passing to the limit gives $\matr{0}\in\partial_{\Delta_n^m}
f_{\matr{V}^{*}}(\matr{U}^{*})$. By the lower semicontinuity of
$(\matr{U},\matr{V})\mapsto g^-_{f_{\matr{U}}}(\matr{V})$, we have 
\[
g^-_{f_{\matr{U}^*}}(\matr{V}^*)
\leq
\liminf_{k\to\infty}
g^-_{f_{\matr{U}_k}}(\matr{V}_k)
=0.
\]
Hence, we have $g^-_{f_{\matr{U}^*}}(\matr{V}^*)=0$, and
$(\matr{U}^*,\matr{V}^*)$ is stationary.
\end{proof}

\section{Proof of the direct-fit update lemmas}
\label{app:proof-direct-a}

\begin{proof}[Proof of \cref{lem:A-update-direct}]
Recall that $h_k$ is the elementary function defined in
\eqref{eq:hk_direct}. If $\operatorname{dist}(0,\partial h_k(0))=0$, then
$\alpha_k^*=0$ by \hyperlink{rule:R2}{Rule~(R2)}, and all three
statements hold trivially. Hence, in the remainder of the proof, we assume
$\operatorname{dist}(0,\partial h_k(0))>0.$

We first prove part~(i). Fix an update iteration $k$ and set
$p=p_k$, $i=i_k$, and $j=j_k$. For a trial step $\alpha$, let $\matr{A}(\alpha)$
be obtained from $\matr{A}_{k-1}$ by replacing its
$p$-th column with
$\matr{T}_{i,j}(\alpha)(\matr{A}_{k-1})_{:,p}$. The ECT transformation changes
only two entries:
\[
(\matr{A}(\alpha))_{i,p}
=(\matr{A}_{k-1})_{i,p}+\alpha(\matr{A}_{k-1})_{j,p},
\\ \ 
(\matr{A}(\alpha))_{j,p}
=(1-\alpha)(\matr{A}_{k-1})_{j,p}.
\]
Set $a \eqdef (\matr{A}_{k-1})_{i,p}\geq0$ and
$b \eqdef (\matr{A}_{k-1})_{j,p}>0$, so the feasible interval is
$[-a/b,1]$. By \cref{rem:pair-reversal}, we may assume that $a\leq b$.
Define
$\matr{R}^{(\ell)}_{k-1}\eqdef\matr{C}^{(\ell)}-
\matr{A}_{k-1}\matr{D}^{(\ell)}_{k-1}\matr{A}^{\T}_{k-1}$.
A direct
expansion under the ECT update gives the quartic polynomial
\begin{equation}\label{eq:h_quartic}
h_k(\alpha) - h_k(0) = c_4\alpha^4 + c_3\alpha^3 + c_2\alpha^2 + c_1\alpha,
\end{equation}
with the coefficients given by
\begin{align*}
c_4 &= 4b^4\sum_{\ell=1}^{L} \bigl(D_{p,p}^{(\ell)}\bigr)^2,\ \ c_3 = 8(a-b)b^3\sum_{\ell=1}^{L}
\bigl(D_{p,p}^{(\ell)}\bigr)^2, \\
c_2 &= \left(4b^2\sum_{q=1}^{n} A_{q,p}^2
+2b^2(b-a)^2\right)
\sum_{\ell=1}^{L}\bigl(D_{p,p}^{(\ell)}\bigr)^2 
-2b^2\sum_{\ell=1}^{L}D_{p,p}^{(\ell)}
\left(R_{i,i}^{(\ell)}+R_{j,j}^{(\ell)}-2R_{i,j}^{(\ell)}\right), \\
c_1 &= 4b\sum_{\ell=1}^{L}D_{p,p}^{(\ell)}
\sum_{q=1}^{n}A_{q,p}
\left(R_{j,q}^{(\ell)}-R_{i,q}^{(\ell)}\right),
\end{align*}
where $A_{q,p}$, $D_{p,p}^{(\ell)}$, and $R_{i,j}^{(\ell)}$ denote the
corresponding entries of $\matr{A}_{k-1}$,
$\matr{D}^{(\ell)}_{k-1}$, and $\matr{R}^{(\ell)}_{k-1}$, respectively.

By \eqref{eq:h_quartic} and \cref{rem:dist_equals_derivative}, we have 
$h'_k(0)=c_1$ and 
$\operatorname{dist}\!\left(0,\partial h_k(0)\right)
=|h'_k(0)|=|c_1|.$ 
If $c_4=0$, the coefficient formulas give $c_3=c_2=c_1=0$, which
contradicts the standing assumption. Thus, we have $c_4>0$. Let
$\chi\eqdef(b-a)/b$. Then we have $c_3=-2\chi c_4$ and, because $a\leq b$, we obtain
$-1\leq-a/b\leq0\leq\chi\leq1$.

Let $\alpha_k^*\in[-a/b,1]$ denote the first local minimizer selected by
the line search. If $h_k'(\alpha_k^*)=0$, then we have
$\alpha_k^*h_k'(\alpha_k^*)=0$.
Otherwise, $\alpha_k^*$ is a boundary point, and no interior local minimizer
is encountered when moving from the origin along the descent direction.
Hence, the derivative retains the descent sign up to $\alpha_k^*$, so
\[
\alpha_k^*h_k'(\alpha_k^*)
=4c_4(\alpha_k^*)^4+3c_3(\alpha_k^*)^3
+2c_2(\alpha_k^*)^2+c_1\alpha_k^*\leq0.
\]
Moreover, \hyperlink{rule:R2}{Rule~(R2)} follows a descent direction that
$c_1\alpha_k^*=-|c_1||\alpha_k^*|$.
Combining this inequality with \eqref{eq:h_quartic} yields
\begin{align*}
h_k(0)-h_k(\alpha_k^*)
&=-c_4(\alpha_k^*)^4-c_3(\alpha_k^*)^3
-c_2(\alpha_k^*)^2-c_1\alpha_k^*
\geq c_4(\alpha_k^*)^4
+\frac{1}{2}c_3(\alpha_k^*)^3
-\frac{1}{2}c_1\alpha_k^*\\
&=\frac{1}{2}|c_1||\alpha_k^*|
+c_4(\alpha_k^*)^3(\alpha_k^*-\chi).
\end{align*}
If $\alpha_k^*\in[-a/b,0]\cup[\chi,1]$, this gives
$h_k(0)-h_k(\alpha_k^*)
\geq\frac{1}{2}|c_1||\alpha_k^*|.$ 
It remains to consider $\alpha_k^*\in(0,\chi)$. The line search then
proceeds in the positive descent direction, so $c_1=h_k'(0)<0$. Hence, we obtain
\begin{align*}
h_k(0)-h_k(\alpha_k^*)
\geq \frac{1}{2}|c_1||\alpha_k^*|
+c_4(\alpha_k^*)^3(\alpha_k^*-\chi)
=\left(\frac{1}{2}
+\frac{c_4(\alpha_k^*)^2(\alpha_k^*-\chi)}{|c_1|}\right)
|c_1||\alpha_k^*|.
\end{align*}
We next prove that the term $\frac{1}{2}
+\frac{c_4(\alpha_k^*)^2(\alpha_k^*-\chi)}{|c_1|}$ admits a uniform positive lower bound.

First, suppose that $h_k'$ has three distinct real roots
$r_1<r_2<r_3$. Vieta's theorem gives
$r_1+r_2+r_3=3\chi/2>0$. Since $c_1=h_k'(0)<0$, one negative root and
two positive roots are impossible. If two roots were negative, the positive
root would exceed $3\chi/2>\chi$, contradicting
$\alpha_k^*\in(0,\chi)$. Thus all three roots are positive, and we have
$0<\alpha_k^*=r_1<\chi/2$. Because $\alpha_k^*$ is an interior local
minimizer, we have
\begin{align*}
h_k''(\alpha_k^*)
=12c_4(\alpha_k^*)^2-12c_4\chi\alpha_k^*+2c_2\geq0,\ \
h_k'(\alpha_k^*)
=4c_4(\alpha_k^*)^3-6c_4\chi(\alpha_k^*)^2
+2c_2\alpha_k^*-|c_1|=0.
\end{align*}
These relations yield
$|c_1|\geq6c_4\chi(\alpha_k^*)^2-8c_4(\alpha_k^*)^3>0.$ 
Since $c_4(\alpha_k^*)^2(\alpha_k^*-\chi)<0$, we obtain
\[
\frac{1}{2}
+\frac{c_4(\alpha_k^*)^2(\alpha_k^*-\chi)}{|c_1|}
\geq
\frac{2\chi-3\alpha_k^*}{6\chi-8\alpha_k^*}
\geq\frac{1}{4},
\]
where the right-hand side decreases to $\frac{1}{4}$ as
$\alpha_k^*\to\chi/2$ from below.

Second, suppose that $h_k'$ does not have three distinct real roots.
Since $\alpha_k^*\in(0,\chi)$ is an interior local minimizer, it cannot
be a root of even multiplicity. Counting multiplicity, write the other two
roots as $r\pm\mathrm{i}s$, where $r,s\in\mathbb{R}$ and $s\geq0$.
Vieta's relations give
$\alpha_k^*(r^2+s^2)=|c_1|/(4c_4)$ and
$\alpha_k^*+2r=3\chi/2$. Thus, we have $r>0$ and obtain
\[
|c_1|
\geq4c_4\alpha_k^*r^2
=c_4\alpha_k^*
\left(\frac{3}{2}\chi-\alpha_k^*\right)^2
>0.
\]
Consequently, we obtain
\[
\frac{1}{2}
+\frac{c_4(\alpha_k^*)^2(\alpha_k^*-\chi)}{|c_1|}
\geq
\frac{1}{2}
+\frac{\alpha_k^*(\alpha_k^*-\chi)}
{(3\chi/2-\alpha_k^*)^2}
\geq\frac{1}{6},
\]
where the last rational expression is minimized at
$\alpha_k^*=3\chi/4$. 
Combining the cases gives
\[
h_k(0)-h_k(\alpha_k^*)
\geq\frac{1}{6}|c_1||\alpha_k^*|
=\eta_{\mathrm{A},1}\operatorname{dist}\!\left(0,\partial h_k(0)\right)
|\alpha_k^*|,
\]
which proves part~(i).

Part~(ii) follows directly from \hyperlink{rule:R2}{Rule~(R2)}.

Finally, we prove part~(iii).
Suppose that \cref{assump:interiority} holds for the $\matr{A}$-block.
For all sufficiently large $k$, $\alpha_k^*$ cannot attain either endpoint
of the feasible interval, since $\alpha_k^*=1$ would give
$(\matr{A}_k)_{j,p}=0$, while $\alpha_k^*=-a/b$ would give
$(\matr{A}_k)_{i,p}=0$. Hence, $\alpha_k^*$ is an interior local minimizer
and $h_k'(\alpha_k^*)=0$, which gives
$4c_4(\alpha_k^*)^3
+3c_3(\alpha_k^*)^2
+2c_2\alpha_k^*
+c_1=0.$ 
Define
$\tilde{c}_4\eqdef c_4/b^2$,
$\tilde{c}_3\eqdef c_3/b^2$, and
$\tilde{c}_2\eqdef c_2/b^2$. Then we have
\[
|\alpha_k^*|
=\frac{|c_1|}{\left|4c_4(\alpha_k^*)^2+3c_3\alpha_k^*+2c_2\right|}
=\frac{|c_1|}{b^2\left|4\tilde{c}_4(\alpha_k^*)^2
+3\tilde{c}_3\alpha_k^*+2\tilde{c}_2\right|}.
\]
By \cref{prop:boundedness-direct}, the iterates
$\matr{D}_{k-1}^{(\ell)}$ are uniformly bounded for fixed $\lambda>0$. Moreover, we have
$\matr{A}_{k-1}\in\Delta_n^m$ and, because $a\leq b$, we obtain
$|\alpha_k^*|\leq1$. The coefficient formulas therefore show that
$\tilde{c}_2$, $\tilde{c}_3$, and $\tilde{c}_4$ are uniformly bounded.
Hence, there is a constant $K_{\mathrm{A},1}>0$, independent of $k$,
that bounds
$|4\tilde{c}_4(\alpha_k^*)^2+3\tilde{c}_3\alpha_k^*+2\tilde{c}_2|$
from above. With $\kappa_{\mathrm{A},1}=1/K_{\mathrm{A},1}$, we obtain
\[
|\alpha_k^*|
\geq
\frac{|c_1|}{b^2K_{\mathrm{A},1}}
\geq
\kappa_{\mathrm{A},1}\operatorname{dist}\!\left(0,\partial h_k(0)\right),
\]
where the last inequality uses $0<b\leq1$. This proves part~(iii).
\end{proof}

\begin{proof}[Proof of \cref{lem:D-update-direct}]
We first prove part~(i). For each $\ell$, let
$\vect{d}^{(\ell)}\eqdef\operatorname{diag}(\matr{D}^{(\ell)})$,
$\vect{c}^{(\ell)}\eqdef\vecl(\matr{C}^{(\ell)})$, and
$\matr{B}_{k-1}\eqdef\matr{A}_{k-1}\kr\matr{A}_{k-1}$. Vectorization
and
$\vecl(\matr{A}\operatorname{Diag}(\vect{d})\matr{A}^{\T})
=(\matr{A}\kr\matr{A})\vect{d}$ give
\[
f_{\matr{A}_{k-1}}
\bigl((\operatorname{Diag}(\vect{d}^{(\ell)}))_{\ell=1}^{L}\bigr)
=\sum_{\ell=1}^{L}
\left(\left\|\vect{c}^{(\ell)}-\matr{B}_{k-1}\vect{d}^{(\ell)}\right\|^2
+\lambda\|\vect{d}^{(\ell)}\|^2\right).
\]
For each $\ell$, the normal equations are
$(\matr{B}_{k-1}^{\T}\matr{B}_{k-1}+\lambda\matr{I}_m)
\vect{d}_k^{(\ell)}=\matr{B}_{k-1}^{\T}\vect{c}^{(\ell)}$.
Since $\lambda>0$, their unique solution gives \eqref{eq:D-update-1}.
This is the ridge-regularized counterpart of the least-squares update
in~\cite{yeredor2002non}. 
Stack the component vectors as
\[
\vect{d}\eqdef\bigl((\vect{d}^{(1)})^{\T},\ldots,
(\vect{d}^{(L)})^{\T}\bigr)^{\T},\\ \ 
\vect{c}\eqdef\bigl((\vect{c}^{(1)})^{\T},\ldots,
(\vect{c}^{(L)})^{\T}\bigr)^{\T}.
\]
Using this stacked notation, define
\[
f_{\mathrm{D},k}(\vect{d})
\eqdef f_{\matr{A}_{k-1}}
\bigl((\operatorname{Diag}(\vect{d}^{(\ell)}))_{\ell=1}^{L}\bigr)
=\left\|\vect{c}-(\matr{I}_L\kron\matr{B}_{k-1})\vect{d}\right\|^2
+\lambda\|\vect{d}\|^2.
\]
With
$\matr{H}_{k-1}\eqdef\matr{B}_{k-1}^{\T}\matr{B}_{k-1}
+\lambda\matr{I}_m
=\bigl(\matr{A}_{k-1}^{\T}\matr{A}_{k-1}\bigr)
\hadam
\bigl(\matr{A}_{k-1}^{\T}\matr{A}_{k-1}\bigr)+\lambda\matr{I}_m,$ 
its gradient and Hessian are
\begin{align*}
\nabla f_{\mathrm{D},k}(\vect{d})
=2(\matr{I}_L\kron\matr{H}_{k-1})\vect{d}
-2(\matr{I}_L\kron\matr{B}_{k-1})^{\T}\vect{c},\ \ 
\nabla^2 f_{\mathrm{D},k}(\vect{d})
=2(\matr{I}_L\kron\matr{H}_{k-1}).
\end{align*}
It follows that
$\nabla f_{\mathrm{D},k}(\vect{d}_k)
=\nabla f_{\mathrm{D},k}(\vect{d}_{k-1})
+2(\matr{I}_L\kron\matr{H}_{k-1})
(\vect{d}_k-\vect{d}_{k-1}).$ 
Combining this identity with the first-order condition
$\nabla f_{\mathrm{D},k}(\vect{d}_k)=\vect{0}$ gives
\begin{equation}\label{eq:D-gradient-identity}
\nabla f_{\mathrm{D},k}(\vect{d}_{k-1})
=-2(\matr{I}_L\kron\matr{H}_{k-1})
(\vect{d}_k-\vect{d}_{k-1}).
\end{equation}
Let $K_{\mathrm{D},k}\eqdef\lambda_{\max}(2\matr{H}_{k-1})$.
Writing the columns of $\matr{A}_{k-1}$ as $\vect{a}_p$, simplex
feasibility gives $\|\vect{a}_p\|\leq1$. Thus, we have
$\|\matr{B}_{k-1}\|_2^2\leq\|\matr{B}_{k-1}\|_{\mathrm F}^2
=\sum_{p=1}^{m}\|\vect{a}_p\|^4\leq m$.
Consequently, we have $K_{\mathrm{D},k}\leq K_{\mathrm{D}}\eqdef2(m+\lambda)$ for every $k$, and we obtain
\begin{equation}\label{eq:D-gradient-maxbound}
\left\|\nabla f_{\mathrm{D},k}(\vect{d}_{k-1})\right\|
\leq
K_{\mathrm{D},k}
\left\|\vect{d}_k-\vect{d}_{k-1}\right\|
\leq
K_{\mathrm{D}}
\left\|\vect{d}_k-\vect{d}_{k-1}\right\|.
\end{equation}
For every $k$, the fixed quadratic penalty gives
$\matr{H}_{k-1}
=\matr{B}_{k-1}^{\T}\matr{B}_{k-1}
+\lambda\matr{I}_m
\succeq\lambda\matr{I}_m$.
Consequently, we have
\begin{equation}\label{eq:D-gradient-minbound}
\matr{I}_L\kron\matr{H}_{k-1}
\succeq
\lambda\matr{I}_{Lm}.
\end{equation}
By the quadratic expansion of $f_{\mathrm{D},k}$, we have
\begin{align*}
f_{\mathrm{D},k}(\vect{d}_k)
={}f_{\mathrm{D},k}(\vect{d}_{k-1})
+\left\langle\nabla f_{\mathrm{D},k}(\vect{d}_{k-1}),
\vect{d}_k-\vect{d}_{k-1}\right\rangle
+(\vect{d}_k-\vect{d}_{k-1})^{\T}
(\matr{I}_L\kron\matr{H}_{k-1})
(\vect{d}_k-\vect{d}_{k-1}).
\end{align*}
Substituting \eqref{eq:D-gradient-identity} into this expansion and then
applying \eqref{eq:D-gradient-maxbound} and
\eqref{eq:D-gradient-minbound} gives
\begin{align*}
f_{\matr{A}_{k-1}}(\matr{D}_{k-1})
-f_{\matr{A}_{k-1}}(\matr{D}_k)
&=f_{\mathrm{D},k}(\vect{d}_{k-1})-f_{\mathrm{D},k}(\vect{d}_k)
=(\vect{d}_k-\vect{d}_{k-1})^{\T}
(\matr{I}_L\kron\matr{H}_{k-1})
(\vect{d}_k-\vect{d}_{k-1})\\
&\geq\lambda\|\vect{d}_k-\vect{d}_{k-1}\|^2
\geq \frac{\lambda}{K_{\mathrm{D}}}
\left\|\nabla f_{\mathrm{D},k}(\vect{d}_{k-1})\right\|
\|\matr{D}_{k}-\matr{D}_{k-1}\|.
\end{align*}
Because $\operatorname{Diag}:\mathbb{R}^m\to\set{D}_m$ is a linear
isometry under the Euclidean and Frobenius norms, the chain rule and the
stacked representation give
$\left\|\nabla f_{\mathrm{D},k}(\vect{d}_{k-1})\right\|
=
\left\|
\nabla f_{\matr{A}_{k-1}}\!\left(\matr{D}_{k-1}\right)
\right\|.$
Since $\eta_{\mathrm{D}}=\lambda/K_{\mathrm{D}}$, this proves part~(i).

We next prove part~(ii). If $\left\|
\nabla f_{\matr{A}_{k-1}}\!\left(\matr{D}_{k-1}\right)
\right\|$ vanishes, \eqref{eq:D-gradient-identity} and
the positive definiteness of $\matr{I}_L\kron\matr{H}_{k-1}$ give
$\vect{d}_k=\vect{d}_{k-1}$, as required.

Finally, \eqref{eq:D-gradient-maxbound} gives
\[
\|\matr{D}_k-\matr{D}_{k-1}\|
\geq\kappa_{\mathrm{D}}
\left\|
\nabla f_{\matr{A}_{k-1}}\!\left(\matr{D}_{k-1}\right)
\right\|,
\]
where $\kappa_{\mathrm{D}}=1/K_{\mathrm{D}}$. This proves part~(iii).
\end{proof}

\section{Proof of the indirect-fit update lemma}
\label{app:proof-indirect-a}

\begin{proof}[Proof of \cref{lem:A-update-indirect}]
Recall that $h_k$ is the elementary function defined in
\eqref{eq:hk_indirect}. If $\operatorname{dist}(0,\partial h_k(0))=0$, then
$\alpha_k^*=0$ by \hyperlink{rule:R1}{Rule~(R1)}, and all three
statements hold trivially. Hence, in the remainder of the proof, we assume
$\operatorname{dist}(0,\partial h_k(0))>0.$ 

We first prove part~(i). Fix an update iteration $k$ and set
$p=p_k$, $i=i_k$, and $j=j_k$. For a trial step $\alpha$, let $\matr{A}(\alpha)$
be obtained from $\matr{A}_{k-1}$ by replacing its
$p$-th column with
$\matr{T}_{i,j}(\alpha)(\matr{A}_{k-1})_{:,p}$. The ECT transformation changes
only two entries:
\[
(\matr{A}(\alpha))_{i,p}
=(\matr{A}_{k-1})_{i,p}+\alpha(\matr{A}_{k-1})_{j,p},
\\ \ 
(\matr{A}(\alpha))_{j,p}
=(1-\alpha)(\matr{A}_{k-1})_{j,p}.
\]
Set $a\eqdef(\matr{A}_{k-1})_{i,p}\geq0$ and
$b\eqdef(\matr{A}_{k-1})_{j,p}>0$, so the feasible interval is
$[-a/b,1]$.
Define
$\matr{Q}^{(\ell)}_{k-1}\eqdef
\matr{A}_{k-1}^{\T}(\matr{C}^{(\ell)})^{-1}$ and
$\matr{P}^{(\ell)}_{k-1}\eqdef\matr{Q}^{(\ell)}_{k-1}\matr{A}_{k-1}$. A direct expansion under the ECT update gives the quadratic polynomial
\begin{equation}\label{eq:h_quadratic}
h_k(\alpha)-h_k(0)=c_2\alpha^2+c_1\alpha,
\end{equation}
with the coefficients given by
\begin{align*}
c_2=2b^2\sum_{\ell=1}^{L}\sum_{\substack{q=1\\q\neq p}}^{m}
\bigl(Q_{q,i}^{(\ell)}-Q_{q,j}^{(\ell)}\bigr)^2,\ \ 
c_1
=4b\sum_{\ell=1}^{L}\sum_{\substack{q=1\\q\neq p}}^{m}
P_{q,p}^{(\ell)}
\bigl(Q_{q,i}^{(\ell)}-Q_{q,j}^{(\ell)}\bigr),
\end{align*}
where $Q_{q,i}^{(\ell)}$ and $P_{q,p}^{(\ell)}$ denote the corresponding entries of $\matr{Q}^{(\ell)}_{k-1}$ and $\matr{P}^{(\ell)}_{k-1}$, respectively.

By \eqref{eq:h_quadratic} and \cref{rem:dist_equals_derivative}, we have 
$h'_k(0)=c_1$ and  
$\operatorname{dist}\!\left(0,\partial h_k(0)\right)=|h'_k(0)|=|c_1|.$ 
If $c_2=0$, the coefficient formulas give $c_1=0$, contradicting the
standing assumption. Thus, we have $c_2>0$.
The unconstrained minimizer of \eqref{eq:h_quadratic} is
$-c_1/(2c_2)$. The constrained minimizer is its projection onto
$[-a/b,1]$. Since this interval contains zero, the exact line search gives
$|\alpha_k^*|\leq |c_1|/(2c_2)$. Moreover, \hyperlink{rule:R1}{Rule~(R1)} follows from the descent
direction that $c_1\alpha_k^*=-|c_1||\alpha_k^*|$. Therefore, we obtain
\begin{align*}
h_k(0)-h_k(\alpha_k^*)
=-c_2(\alpha_k^*)^2-c_1\alpha_k^*
=|c_1||\alpha_k^*|-c_2(\alpha_k^*)^2
\geq\frac{1}{2}|c_1||\alpha_k^*|
=\eta_{\mathrm{A},2}\operatorname{dist}\!\left(
0,\partial h_k(0)\right)|\alpha_k^*|.
\end{align*}
This proves part~(i).

Part~(ii) follows directly from \hyperlink{rule:R1}{Rule~(R1)}.

Finally, we prove part~(iii).
Suppose that \cref{assump:interiority} holds for the $\matr{A}$-block.
For all sufficiently large $k$, $\alpha_k^*$ cannot attain either endpoint
of the feasible interval, since $\alpha_k^*=1$ would give
$(\matr{A}_k)_{j,p}=0$, while $\alpha_k^*=-a/b$ would give
$(\matr{A}_k)_{i,p}=0$.
Hence, the exact line search yields the interior minimizer
$\alpha_k^*=-c_1/(2c_2)$. 
Define
$\tilde{c}_2\eqdef c_2/b^2$.
Since $\matr{A}_{k-1}\in\Delta_n^m$, the matrices
$(\matr{C}^{(\ell)})^{-1}$ are fixed, and there are finitely many index
pairs, $2\tilde{c}_2$ is bounded above by a constant
$K_{\mathrm{A},2}>0$ independent of $k$.
With $\kappa_{\mathrm{A},2}=1/K_{\mathrm{A},2}$, we obtain
\[
|\alpha_k^*|
=\frac{|c_1|}{2b^2\tilde{c}_2}
\geq\frac{|c_1|}{b^2K_{\mathrm{A},2}}
\geq\kappa_{\mathrm{A},2}\operatorname{dist}\!\left(0,\partial h_k(0)\right),
\]
where the last inequality uses $0<b\leq1$. This proves part~(iii).
\end{proof}

\section{Proofs of the Semi-NNICA convergence corollaries}
\label{app:proof-semi-nnica-convergence}

We prove \cref{corollary:direct_fit,corollary:indirect_fit} by verifying
the hypotheses of \cref{thm:Schneider_frame_two} using the model-specific
update estimates.

\begin{proof}[Proof of \cref{corollary:direct_fit}]
Apply \cref{thm:Schneider_frame_two} with $f=f_{\mathrm{dir}}$.
For fixed $\lambda>0$, this objective is a polynomial in the entries of
$\matr{A}$ and the diagonal factors, so it admits a real-analytic extension to the ambient
Euclidean space. Moreover, $\Omega=(\set{D}_m)^L$ is a closed linear subspace. The map
\[
\tau_{\matr{V}}:(\mathbb{R}^{m})^{L}\to\Omega,
\\ \ 
(\vect{d}^{(1)},\ldots,\vect{d}^{(L)})
\mapsto\left(\operatorname{Diag}(\vect{d}^{(\ell)})\right)_{\ell=1}^{L},
\]
is a real-analytic linear homeomorphism onto $\Omega$ and satisfies the
conditions of \cref{lem:nonsmooth-lojasiewicz}. 
By \cref{prop:boundedness-direct}, the full sequence remains in a compact
sublevel set of $f_{\mathrm{dir}}$ and therefore admits an accumulation point.
Parts~(i)--(ii) of \cref{lem:A-update-direct,lem:D-update-direct} verify
\eqref{cond:U1}--\eqref{cond:U2} and
\eqref{cond:V1}--\eqref{cond:V2}, respectively, with
$\eta_{\mathrm{U}}=\eta_{\mathrm{A},1}$ and
$\eta_{\mathrm{V}}=\eta_{\mathrm{D}}$.
The block-selection rule uses the gradient of $f_{\mathrm{dir}}$, so these
estimates apply to the same objective as in the general theorem.
Thus, the first conclusion
of \cref{thm:Schneider_frame_two} gives convergence of the whole sequence.
Since $\Omega$ is a linear subspace, we have
$g^-_{f_{\matr{A}}}(\matr{D})
=\|\nabla f_{\matr{A}}\!\left(\matr{D}\right)\|$, 
which is continuous. If \cref{assump:interiority} also holds, part~(iii) of the two lemmas verifies
\eqref{cond:U3} and \eqref{cond:V3} with
$\kappa_{\mathrm{U}}=\kappa_{\mathrm{A},1}$ and
$\kappa_{\mathrm{V}}=\kappa_{\mathrm{D}}$.
The final conclusion of the theorem then
gives constrained stationarity for $f_{\mathrm{dir}}$ and the stated rate with
its constrained \L{}ojasiewicz exponent.
\end{proof}

\begin{proof}[Proof of \cref{corollary:indirect_fit}]
Because the matrices $(\matr{C}^{(\ell)})^{-1}$ are fixed, the indirect-fit
objective is a polynomial in the entries of $\matr{A}$. Represent the absent
second block by the closed singleton
$\Omega=\{\matr{0}_{1\times1}\}\subseteq\mathbb{R}^{1\times1}$ and define the real-analytic
parametrization $\tau_{\matr{V}}:\mathbb{R}\to\Omega$ by
$\tau_{\matr{V}}(z)=\matr{0}_{1\times1}$. Its image is $\Omega$ itself, so the
analytic-parametrization conditions in \cref{lem:nonsmooth-lojasiewicz} hold. 
Compactness of $\Delta_n^m$ gives an accumulation point. The dummy
$\matr{V}$-block is constant, so \eqref{cond:V1}--\eqref{cond:V3} hold
trivially. Parts~(i)--(ii) of \cref{lem:A-update-indirect} verify
\eqref{cond:U1}--\eqref{cond:U2} with
$\eta_{\mathrm{U}}=\eta_{\mathrm{A},2}$, and the first conclusion of
\cref{thm:Schneider_frame_two} gives global convergence. If
\cref{assump:interiority} also holds, part~(iii) verifies \eqref{cond:U3} with
$\kappa_{\mathrm{U}}=\kappa_{\mathrm{A},2}$;
the final conclusion of the theorem then gives stationarity and the stated
rate.
\end{proof}

\noindent\textbf{Acknowledgments.}
This work was supported by the Guangdong Basic and Applied Basic
Research Foundation (2026A1515050001, 2026A1515010188).

\bibliographystyle{siamplain}
\bibliography{Tensor_All}

\end{document}